\definecolor{darkblue}{rgb}{0.0, 0.0, 0.55}
\renewcommand{\subset}{\subseteq}
\newtheorem{theorem}{Theorem}[section]
\newtheorem{lemma}[theorem]{Lemma}
\newtheorem{prop}[theorem]{Proposition}
\newtheorem{algo}[theorem]{Algorithm}
\newtheorem{remark}[theorem]{Remark}
\newtheorem{rem}[theorem]{Remark}
\newtheorem{obs}[theorem]{Observation}
\newtheorem{thm}[theorem]{Theorem}
\newtheorem*{lemma*}{Lemma}
\numberwithin{equation}{section}
\def\beq{\begin{equation}}
\def\eeq{\end{equation}}
\def\ben{\begin{enumerate}}
\def\een{\end{enumerate}}
\def\evidence{\noindent{ \bf Evidence:}\ }
\def\qed{$\blacksquare$}
\def\hbeta{\hat{\beta}}
\def\hgamma{\hat{\gamma}}
\def\cD{\mathcal{D}}
\def\cH{\mathcal{H}}
\def\cK{\mathcal{K}}
\def\hX{\hat{X}}
\def\C{\mathbb{C}}
\def\bbN{\mathbb{N}}
\def\bbR{\mathbb{R}}
\def\R{\mathbb{R}}
\def\smrg{{SM(\R)^g}}
\def\smnrg{{SM_n(\R)^g}}
\def\smdrg{{SM_d(\R)^g}}
\def\smnirg{{SM_{n_i}(\R)^g}}
\def\smxr[#1]{SM_{#1}(\R)}
\def\smxrg[#1]{SM_{#1}(\R)^g}
\newcommand{\df}[1]{{\bf{#1}}{\index{#1}}}
\def\abs{\partial^{\mathrm{free}}}
\def\free{\partial^{\mathrm{free}}}
\def\arv{\partial^{\mathrm{Arv}}}
\def\euc{\partial^{\mathrm{Euc}}}
\def\kAX{k_{A,X}}
\def\ncr{nearly certain}
\def\fep{\mbox{a free extreme point}}
\def\sec{\section}
\def\ssec{\subsection}
\def\sssec{\subsubsection}
\def\RC{RC}
\def\RPT{RPT}
\def\mathand{\quad \quad \mathrm{and} \quad \quad}
\title[Empirical properties of optima in free SDP]{Empirical properties of optima in free semidefinite programs}
\author[E. Evert]{Eric Evert${}^{1}$}
\address{Eric Evert, Group Science, Engineering and Technology\\
 KU Leuven Kulak \\
  E. Sabbelaan 53, 8500 Kortrijk, Belgium \\
  and
  \newline
   Electrical Engineering ESAT/STADIUS\\
  KU Leuven \\
  Kasteelpark Arenberg 10, 3001 Leuven, Belgium
   }
   \email{eric.evert@kuleuven.be}
\thanks{${}^1$Research supported by the NSF grant
DMS-1500835}
\author[Y. Fu]{Yi Fu${}^1$}
\address{Yi Fu, Department of Mathematics\\
  University of California \\
  San Diego
   }
   \email{yif064@ucsd.edu}
   \author[J.W. Helton]{J. William Helton${}^1$}
\address{J. William Helton, Department of Mathematics\\
  University of California \\
  San Diego
   }
   \email{helton@math.ucsd.edu }
   \author[J. Yin]{John Yin${}^1$}
\address{John Yin, Department of Mathematics\\
  University of Wisconsin -- Madison
   }
   \email{jbyin@wisc.edu }
\subjclass[2010]{Primary 46L07. Secondary 90C22}
\date{\today}
\keywords{matrix convex set, extreme point, linear matrix inequality (LMI), spectrahedron, Arveson boundary, free semidefinite programming}
\begin{document}
	\maketitle

\begin{abstract}
Semidefinite programming is based on optimization of linear functionals over convex sets defined by linear matrix inequalities, namely, inequalities of the form
$$L_A(X)=I-A_1X_1-\dots-A_g X_g\succeq0.$$
Here the $X_j$ are real numbers and the set of solutions 
is called a spectrahedron.
These inequalities make sense when the $X_i$ are symmetric matrices of any size, $n\times n$,
and enter the formula though tensor product $A_i\otimes X_i$:
The solution set of $L_A(X)\succeq0$ is called a free spectrahedron
since it contains matrices of all sizes and the defining
``linear pencil" is ``free" of  the sizes of the matrices.

In this article, we report on empirically observed properties of optimizers obtained from optimizing linear functionals over free spectrahedra restricted to matrices $X_i$ of fixed size $n\times n$.

The optimizers  we find are always classical extreme points. Surprisingly, in many reasonable parameter ranges, over 99.9\% are also free extreme points. Moreover, the dimension of the active constraint, $\ker(L_A(X^\ell))$, is about twice what we expected. Another distinctive pattern regards reducibility of optimizing tuples $(X_1^\ell,\dots,X_g^\ell)$.

We give an algorithm for representing elements of a free spectrahedron as matrix convex combinations of free extreme points; these representations satisfy a very low bound on the number of free extreme points needed.

\end{abstract}

	\section{Introduction}

	\subsection{Context and motivation}  
One of the great engines of  math applied to technology
is linear programming, where one optimizes a linear function over a 
{\it polyhedron} of many (say g) real  numbers. 
About 25 years ago a powerful generalization of linear
programming called semidefinite programming emerged in
the convex optimization community and it has been vigorously
pursued since then (both in theory and applications).
Incidentally convex optimization is central to many areas of application.
Semidefinite programming pertains to optimizing a linear functional
over a convex set called a  {\it spectrahedron}.
Such sets are solution sets of
``linear matrix inequalities"  (LMI).

A sizeable community is   interested in studying such problems
when the unknowns are tuples of matrices and the 
structure of the LMI does not change with the size of the matrices. Such a problem is called ``dimension free".   Matrix multiplication does not commute
hence the term noncommutative (NC) LMI is often used.
This subject examines  {\it free spectrahedra} which
serve as a model for convex structures  that occur in
linear control and systems engineering problems
specified entirely by signal flow diagrams.

Pursuits like this  are in the spirit of the burgeoning area
 called {\it free analysis}. One of the original efforts here was Voiculescu's free probability,
which started by developing a theory for operator variables
and which has become a big area having many associations to
random matrix theory, \cite{MS17}.
Some other directions are  free
analytic function theory, cf. \cite{KVV14} and   free real algebraic geometry \cite{BKP16} \cite{N19}
with some consequences for system engineering being \cite{HMPV09}.

	\ssec{Free convex sets and free spectrahedra}

	Given a matrix $B \in \R^{n \times n}$ we say $B$ is \df{positive semidefinite} if $B$ is symmetric, i.e. $B=B^T$, and all the eigenvalues of $B$ are nonnegative. Let 
	\[
	B \succeq 0
	\]
	\index{$\succeq$}
	denote that the matrix $B$ is positive semidefinite. Similarly, given symmetric matrices $B_1, B_2 \in \R^{n \times n}$, let 
	\[
	B_1 \succeq B_2
	\]
	denote that the matrix $B_1-B_2$ is positive semidefinite.
	
	\sssec{Matrix Convex Combinations}
	
	Let $\smnrg$ \index{$\smnrg$} denote the set of $g$-tuples of real symmetric $n \times n$ matrices and set $\smrg=\cup_n \smnrg$ . That is, an element $X \in \smnrg$ is a tuple 
	\[
	X=(X_1, X_2, \dots X_g)
	\]
	where $X_i \in \R^{n \times n}$ and $X_i=X_i^T$ for each $i=1,2, \dots, g$. Similarly we let $M_{m \times n} (\R)^g$ \index{$M_{m \times n} (\R)^g$} denote the set of $g$-tuples of $m \times n$ matrices with real entries.
	
	Given a finite collection of tuples $\{X^i\}_{i=1}^\ell$ with $X^i \in \smnirg$ for each $i=1,\dots, \ell$, a \df{matrix convex combination of $\{X^i\}_{i=1}^\ell$} is a finite sum of the form
	\[
	\sum_{i=1}^\ell V_i^T X^i V_i \quad \qquad \mathrm{such \ that} \quad \qquad \sum_{i=1}^\ell V_i^T V_i=I_n.
	\]
Here $V_i \in M_{n_i \times n} (\R)$ and
	\[
	V_i^T X^i V_i=(V_i^T X_1^i V_i, V_i^T X_2^i V_i, \dots, V_i^T X_g^i V_i) \in \smnrg
	\]
	for each $i=1,2, \dots, \ell$. We emphasize that the matrix tuples $X^i$ can be of different sizes. That is, the $n_i$ need not be equal.
	
	As an example, if we take $g=2$ and $X=(X_1,X_2) \in \smxr[k]^2$ and $Y=(Y_1,Y_2) \in \smxr[m]^2$, then a matrix convex combination of $X,Y$ is a sum of the form
	\[
	V^TX V+W^TYW=(V^T X_1 V + W^T Y_1 W,\  V^T X_2 V+W^T Y_2 W)
	\]
	where $V:\R^n \to \R^k$ and $W:\R^n \to \R^m$ are contractions and $V^T V+W^T W=I_n$ for some positive integer $n$.

	A set $\Gamma \subset \smrg $ is \df{matrix convex} if it is closed under matrix convex combinations. The \df{matrix convex hull} of a set $\Gamma \subset \smrg$  is the set of all matrix convex combinations of elements of $\Gamma$. Equivalently, the matrix convex hull of $\Gamma$ is the smallest matrix convex set which contains $\Gamma$.

		\subsubsection{Free Spectrahedra and Linear Matrix Inequalities} In this article we focus on a class of matrix convex sets called free spectrahedra. A free spectrahedron is a matrix convex set which can be defined by a linear matrix inequality. Fix a $g$-tuple $A \in \smdrg$ of $d \times d$ symmetric matrices. A \df{monic linear pencil} $L_A (x)$ is a sum of the form 
\[
L_A (x)=I_d-A_1 x_1 - \dots - A_g x_g.
\index{$L_A (x)=I_d-A_1 x_1 - \dots - A_g x_g$}
\]
Given a tuple $X \in \smnrg$ the \df{evaluation} of $L_A$ on $X$ is
\[
L_A(X):= I_{dn}-A_1 \otimes X_1- \dots - A_g \otimes X_g
 \index{$L_A(X):= I_{dn}-A_1 \otimes X_1- \dots - A_g \otimes X_g$}
\]
where $\otimes$ denotes the Kronecker product. As an example, the \df{Kronecker product} of two matrices is
	\[
	\begin{pmatrix}
	1 & 2 \\
	3 & 4
	\end{pmatrix}
	\otimes B=
	\begin{pmatrix}
	1B & 2B \\
	3B & 4B
	\end{pmatrix}
	\]
	where the right hand side of the equality is a block matrix. A \df{linear matrix inequality} is an inequality with the form
\[
L_A(X) \succeq 0.
\]
Let $\Lambda_A(X)$ denote the linear part of the $L_A(X)$, i.e. 
\[
\Lambda_A(X):= A_1 \otimes X_1 + \dots +A_g \otimes X_g.
\]
\index{$\Lambda_A(X):= A_1 \otimes X_1 + \dots +A_g \otimes X_g$}
With this notation we have $L_A (X)=I_{dn}-\Lambda_A (X)$. 
	
	Given a $g$-tuple $A \in \smdrg$ and a positive integer $n$ we define the \df{free spectrahedron $\cD_A$ at level $n$}, denoted $\cD_A (n)$ \index{$\cD_A (n)$} by 
	\[
	\cD_A (n):= \{X \in \smnrg | \ L_A (X)=I_{dn}-A_1 \otimes X_1- \dots - A_g \otimes X_g \succeq 0 \}.
	\]
	Stated in words, the free spectrahedron $\cD_A$ at level $n$ is the set of $g$-tuples of $n \times n$ symmetric matrices $X$ such that the evaluation $L_A(X)$ is positive semidefinite. Define the \df{free spectrahedron} $\cD_A$ \index{$\cD_A$} to be the union over $n$ of the free spectrahedron $\cD_A$ at level $n$. In other words 
	\[
	\cD_A:= \cup_{n=1}^\infty \cD_A (n) \subset \smrg.
	\]
	It is not difficult to show that a free spectrahedron is matrix convex.
	
	\begin{lemma}
		Let $A \in \smdrg$ and let $\cD_A$ be the associated free spectrahedron. Then $\cD_A$ is matrix convex.
	\end{lemma}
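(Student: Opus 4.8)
The plan is to verify directly that $\cD_A$ is closed under matrix convex combinations, working level by level. Suppose we are given tuples $X^1, \dots, X^\ell$ with $X^i \in \cD_A(n_i)$ and contractions $V_i \in M_{n_i \times n}(\R)$ satisfying $\sum_{i=1}^\ell V_i^T V_i = I_n$; we must show that $Y := \sum_{i=1}^\ell V_i^T X^i V_i$ lies in $\cD_A(n)$, i.e. that $L_A(Y) \succeq 0$.

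The key computation is to track how $L_A$ interacts with the map $X^i \mapsto V_i^T X^i V_i$. First I would observe the mixed-product identity $A_j \otimes (V_i^T X_j^i V_i) = (I_d \otimes V_i)^T (A_j \otimes X_j^i)(I_d \otimes V_i)$, which follows from the standard rule $(PQ)\otimes(RS) = (P\otimes R)(Q\otimes S)$ for Kronecker products together with the symmetry of $A_j$. Summing over $j$ and appending the identity term (using $(I_d\otimes V_i)^T(I_d\otimes V_i) = I_d \otimes V_i^T V_i$), this yields
\[
L_A\!\left(V_i^T X^i V_i\right) = (I_d \otimes V_i)^T L_A(X^i)(I_d \otimes V_i) + I_d \otimes V_i^T V_i - (I_d\otimes V_i)^T(I_d\otimes V_i),
\]
but the last two terms cancel, so in fact $L_A(V_i^T X^i V_i) = (I_d \otimes V_i)^T L_A(X^i)(I_d \otimes V_i)$ once one is careful that the identity block $I_{dn}$ on the left matches $\sum_i (I_d\otimes V_i)^T(I_d\otimes V_i) = I_d \otimes \sum_i V_i^T V_i = I_{dn}$. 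The cleanest way to organize this is to sum the whole pencil at once: using linearity of $L_A$ in its argument only through the linear part $\Lambda_A$,
\[
L_A(Y) = I_{dn} - \sum_{i=1}^\ell (I_d \otimes V_i)^T \Lambda_A(X^i)(I_d \otimes V_i) = \sum_{i=1}^\ell (I_d\otimes V_i)^T L_A(X^i)(I_d \otimes V_i),
\]
where the second equality uses $\sum_i (I_d\otimes V_i)^T(I_d\otimes V_i) = I_{dn}$ to redistribute the identity.

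From here the conclusion is immediate: each $L_A(X^i) \succeq 0$ since $X^i \in \cD_A$, hence each summand $(I_d\otimes V_i)^T L_A(X^i)(I_d\otimes V_i)$ is positive semidefinite (a congruence of a positive semidefinite matrix), and a sum of positive semidefinite matrices is positive semidefinite. Therefore $L_A(Y) \succeq 0$, so $Y \in \cD_A(n) \subset \cD_A$, proving $\cD_A$ is matrix convex.

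I do not anticipate a genuine obstacle here; the only point requiring care is bookkeeping with the Kronecker products — specifically making sure the identity term $I_{dn}$ is correctly apportioned among the $\ell$ summands via the constraint $\sum_i V_i^T V_i = I_n$, rather than appearing once. An alternative, essentially equivalent, route would be to first handle the case $\ell = 2$ with a single isometry $\begin{pmatrix} V_1 \\ V_2 \end{pmatrix}$ built from the two contractions (the stacked map is an isometry precisely because $V_1^T V_1 + V_2^T V_2 = I_n$), then induct on $\ell$; but the direct sum-of-congruences argument above avoids the induction and is shorter.
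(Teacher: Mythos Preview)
Your argument is correct and is exactly the straightforward verification the paper has in mind; the paper's own proof is simply the word ``Straightforward.'' Your computation $L_A(Y)=\sum_i (I_d\otimes V_i)^T L_A(X^i)(I_d\otimes V_i)$, obtained from the mixed-product rule for Kronecker products together with $\sum_i (I_d\otimes V_i)^T(I_d\otimes V_i)=I_d\otimes\sum_i V_i^T V_i=I_{dn}$, is the standard way to spell this out.
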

	
	\begin{proof}
		Straightforward.
	\end{proof}
	
	\begin{remark}
		All matrix convex sets which satisfy the natural additional assumption that they are ``defined by a polynomial in noncommuting variables" are free spectrahedra \cite{HM12,HM14,K19}.
	\end{remark}
	
	A cultural remark is that when we restrict our attention to level $n=1$, sets $\cD_A (1)$ are precisely LMI representable sets with the cone of classical semidefinite programming.
	
	Say a free spectrahedron is \df{bounded} if there is a fixed real number $C$ so 
	\[
	CI_n-\sum_{i=1}^g (X_i)^2 \succeq 0
	\]
	for all $X=(X_1,\dots, X_g) \in \cD_A (n)$ and all positive integers $n$. It is straightforward to show that a free spectrahedron is bounded if and only if $\cD_A (1)$ is bounded. As a consequence of our use of nonstrict inequalities in the definition of a free spectrahedron, every free spectrahedron $\cD_A$ considered in this article is \df{closed} in the sense that $\cD_A (n)$ is closed for each integer $n$. However, other authors may consider strict inequalities when defining free spectrahedra.

	\subsection{Extreme Points of Free Spectrahedra} As with classical convex sets, there is much interest in the extreme points of matrix convex sets and free spectrahedra. We will consider two types of extreme points in this article: Euclidean (classical) extreme points and free (Arveson) extreme points.

	Given a matrix convex set $K$, say $X \in K(n)$ is a \df{Euclidean extreme point} of $K$ if $X$ is a classical extreme point of $K(n)$, i.e. if $X$ cannot be expressed as a nontrivial convex combination of elements of $K(n)$. We let $\euc K$ \index{$\euc K$} denote the set of Euclidean extreme points of $K$.

	Before introducing our second type of extreme point we give a brief definition. Given tuples $X,Y=(X_1, \dots, X_g) \in \smnrg$, if  there is an orthogonal (i.e. a real valued unitary) matrix $U$ so that
	\[
	U^T X U:=(U^T X_1 U, \dots, U^T X_g U)= (Y_1, \dots, Y_g)
	\]
	then we say $X$ and $Y$ are \df{unitarily equivalent}. We say a subset $E \subset \cD_A$ of a free spectrahedron is \df{closed under unitary equivalence} if $X \in E$ and $X$ is unitarily equivalent to $Y$ implies $Y \in E$. 
	
	Say $X \in K(n)$ is an \df{free extreme point} of $K$ if whenever $X$ is written as a  matrix convex combination 
	\[
	X=\sum_{i=1}^k V_i^T Y^i V_i \quad \qquad \mathrm{such\ that} \qquad \quad \sum_{i=1}^k V_i^T V_i =I_n
	\]
	with $V_i \neq 0$ for each $i$, then for all $i$ either $n_i=n$ and  $X$ is unitarily equivalent to $Y^i$ or $n_i > n$ and there exists a tuple $Z^i \in K$ such that  $X \oplus Z^i$ is unitarily equivalent to $Y^i$. We let $\abs K$ \index{$\abs K$} denote the set of free extreme points of $K$. Roughly speaking, a tuple is a free extreme point of a matrix convex set if it cannot be expressed as a nontrivial matrix convex combination of elements of the set.
	
	It is not difficult to show that a nontrivial convex combination of elements of a matrix convex set can be expressed as a nontrivial matrix convex combination. It follows that free extreme points are always Euclidean extreme points.
	  
	  	\sssec{Irreducibility of matrix tuples}
Free extreme points are irreducible as tuples of matrices, a notion we now define.	Given a matrix $M \in \R^{n \times n}$, a subspace $N \subset \R^n$ is a \df{reducing subspace} if both $N$ and $N^\perp$ are invariant subspaces of $M$. That is, $N$ is a reducing subspace for $M$ if $MN \subseteq N$ and $MN^\perp \subseteq N^\perp$. A tuple $X \in \smnrg$ is \df{irreducible} (over $\R$) if the matrices $X_1, \dots, X_g$ have no common reducing subspaces in $\R^n$; a tuple is \df{reducible} (over $\R$) if it is not irreducible (over $\R$).

Since we do not work over $\C$, we drop the distinction ``over $\R$" when discussing irreduciblity in the remainder of the article. However, we mention that irreducibility is frequently considered over $\C$ rather than over $\R$ in other articles. Irreducibility over $\C$ is equivalent to the matrices $X_1,\dots,X_g$ generating the full algebra of complex $n \times n$ matrices and also to the tuple $(X_1,\dots,X_g)$ having a trivial commutant. However, a tuple which is irreducible over $\R$ is not necessarily irreducible over $\C$  and may fail to generate the full matrix algebra. A tuple which is irreducible over $\R$ has a trivial symmetric commutant, but there may be skew symmetric matrices which commute with each element of the tuple. See Section \ref{sec:NumVerIrred}.

	\sssec{Free extreme points are the minimal spanning set of free spectrahedra}
	
	A central result in study of classical convex sets is the Minkowski Theorem which shows that any compact convex set in $\R^g$ is the convex hull of its of its extreme points. Furthermore, any subset of a convex set with this spanning property must contain the extreme points of the convex set. In this sense, free extreme points are the correct generalization of classical extreme points for a free spectrahedron.

	\begin{thm} \cite[Theorem 1.1]{EH19}
		\label{thm:AbsSpanMin}
		Let $A \in \smdrg$ such that $\cD_A$ is a bounded free spectrahedron. Then $\cD_A$ is the matrix convex hull of its free extreme points. 
	\end{thm}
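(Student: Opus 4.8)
\emph{Plan.} There are two inclusions to check. That the matrix convex hull of $\abs{\cD_A}$ lies inside $\cD_A$ is immediate, since $\cD_A$ is matrix convex and every free extreme point belongs to $\cD_A$ by definition. The substance is the reverse inclusion: fixing a level $n$ and a tuple $X \in \cD_A(n)$, I want a matrix convex combination $X = \sum_{i=1}^{\ell} V_i^T Y^i V_i$ with $\sum_{i=1}^{\ell} V_i^T V_i = I_n$ in which every $Y^i$ is a free extreme point of $\cD_A$ of some finite size. The driver is dilation theory: I would first invoke the known identification, valid precisely because $\cD_A$ is bounded, of the free extreme points of $\cD_A$ with its Arveson (maximal) extreme points, namely those $Y \in \cD_A$ for which every dilation $\left(\begin{smallmatrix} Y & \beta \\ \beta^* & \gamma \end{smallmatrix}\right)$ lying in $\cD_A$ forces $\beta = 0$. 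Given that identification, a single compression $X = V^T Y V$ with $V^T V = I_n$ and $Y$ an Arveson extreme point already exhibits $X$ in the matrix convex hull, so it suffices to show that $X$ can be dilated, inside $\cD_A$ and adding only finitely many dimensions, to a direct sum of Arveson extreme points.

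\emph{Two moves.} I would carry this out by alternating two operations. First, a tuple in $\cD_A(n)$ that is not a Euclidean extreme point is, by the classical Minkowski theorem applied to the compact convex set $\cD_A(n)\subset\smnrg\cong\R^{g\binom{n+1}{2}}$ (compact because $\cD_A$ is bounded and closed), a convex combination of Euclidean extreme points at level $n$, with finitely many terms by Carath\'eodory; and a scalar convex combination $X=\sum_j t_j X^j$ is the matrix convex combination $\sum_j (\sqrt{t_j}\,I_n)^T X^j (\sqrt{t_j}\,I_n)$, so this reduces the problem, with no enlargement of matrices, to Euclidean extreme points. Second, a Euclidean extreme point of $\cD_A$ that is not Arveson admits (by the known dilation results for Euclidean extreme points that fail to be Arveson) a nontrivial dilation $\widehat{X}\in\cD_A$, of which $X$ is the upper-left compression; one then recurses on $\widehat X$ --- split it into Euclidean extreme points at its level, dilate those that are still not Arveson, and so on --- handling any reducible tuples that appear by writing them as direct sums and treating the summands separately, the decompositions of the summands recombining into one for the direct sum in the evident way.

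\emph{Termination, the main obstacle.} The heart of the proof, and the step I expect to be the genuine difficulty, is to show that this tree of dilations is finite, so that it terminates in finite-dimensional Arveson extreme points rather than running forever with matrix sizes going to infinity. Boundedness of $\cD_A$ is indispensable here: for an unbounded spectrahedron such as a half-space the dilation process really does not terminate and the theorem fails. I would therefore look for a nonnegative integer-valued monovariant on the tuples occurring in the recursion --- a natural candidate being a ``defect'' measuring how badly the Arveson condition fails, controlled by the dimension of $\ker L_A(\cdot)$ relative to the matrix size --- and show that, after re-splitting into Euclidean extreme points, every genuine dilation step strictly decreases it; boundedness is what bounds this quantity and stops the sizes from escaping. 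Once termination is in hand, composing the compressions and the scalar weights gathered along each branch of the finite tree assembles $X$ into a single matrix convex combination of the finitely many finite-dimensional free extreme points at the leaves, and carefully tracking the sizes and the number of leaves along the way is what yields the explicit bounds referred to in the introduction.
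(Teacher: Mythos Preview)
The paper does not prove this theorem; it is quoted from \cite{EH19}. However, the machinery that \cite{EH19} actually uses is displayed later in this paper, in the definition of a \emph{maximal $1$-dilation}, Algorithm~\ref{algo:ArvDiAlg}, and Proposition~\ref{prop:ArvDiAlg}, so one can compare your outline against that.

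Your two-move scheme differs from the \cite{EH19} argument in a way that matters precisely at the step you flag as the main obstacle. The proof in \cite{EH19} never invokes Minkowski/Carath\'eodory at all. Starting from an arbitrary $X\in\cD_A(n)$, it performs a single chain of \emph{maximal $1$-dilations} $X=Y^0,\,Y^1,\,Y^2,\ldots$, each enlarging the size by exactly one; the point of ``maximal'' is that it forces the integer
\[
\mu(Y)\;=\;\dim\{\beta\in M_{m\times 1}(\R)^g:\ \ker L_A(Y)\subseteq\ker\Lambda_A(\beta^T)\}
\]
to drop strictly at every step (this is \cite[Theorem 2.4]{EH19}, invoked in the proof of Proposition~\ref{prop:ArvDiAlg}). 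Since $\mu(Y^0)\le ng$, the chain terminates in at most $ng$ steps at an Arveson extreme $Y^k$; decomposing $Y^k$ into irreducible summands yields free extreme points, and compressing back gives the matrix convex combination together with the explicit Carath\'eodory bound of Theorem~\ref{thm:FreeCaratheodory}.

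Your alternating recursion --- Minkowski-split at the current level, then dilate, then Minkowski-split at the new level, \ldots --- does not come with a clean monovariant. After you dilate a Euclidean extreme $X$ at level $n$ to some $\widehat X$ at level $n+1$ and then re-split $\widehat X$ into Euclidean extreme points at level $n+1$, those new summands live at a strictly larger level, and the natural defects (either $\mu$ above, or the count $gn-d\cdot\dim\ker L_A$ from Theorem~\ref{thm:BeanCount}) can be \emph{larger} for them than for $X$; nothing in the Minkowski step bounds them in terms of $X$. So the termination argument you sketch is not just ``the genuine difficulty'' --- as written it has a gap, and I do not see how to close it without importing the maximal-dilation idea, at which point the intermediate Minkowski steps become superfluous. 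The fix is to drop the first move entirely and dilate $X$ directly, choosing each one-step dilation maximally so that $\mu$ decreases; boundedness enters only to guarantee that the optimization problems defining a maximal $1$-dilation have solutions.
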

	
 It is straightforward from the definition of free extreme points that if $E \subset \cD_A$ is a set of irreducible tuples which is closed under unitary equivalence and whose matrix convex hull is equal to $\cD_A$, then $E$ must contain the free extreme points of $\cD_A$. In this sense the free extreme points are the minimal set which recovers a free spectrahedron through matrix convex combinations.
 
	In addition to this qualitative statement there is a quantitative statement which serves as a natural extension of the Caratheodory theorem for free extreme points of a matrix convex set. For the free Caratheodory bound on the number of free extreme points needed to express a tuple $X$ in a free spectrahedron $\cD_A$ as a matrix convex combination of free extreme points, see Theorem \ref{thm:FreeCaratheodory}. The free Caratheodory bound is significantly less than the bound obtained from the classical Caratheodory theorem for expressing $X$ as a convex combination of classical extreme points of $\cD_A$.
	
	\ssec{Historical context for extreme points of matrix convex sets}
	Results for extreme points per se of matrix convex sets go back at least as far as \cite{WW99}. Here Webster and Winkler consider ``matrix" extreme points for matrix convex sets; a type of extreme point which is related too but less restrictive than free extreme points. Webster and Winkler showed that every compact matrix convex set is the closed matrix convex hull of its matrix extreme points. Matrix extreme points have been subsequently studied in various works, see for example \cite{F00,F04}, and the spanning result for matrix extreme points of Webster and Winkler was recently strengthen in \cite[Theorem 1.10]{HL+}.
	
	Although matrix extreme points have many desirable properties, matrix extreme points can fail to be a minimal spanning set for a matrix convex set. It is not difficult to construct a matrix convex set $K$ such that $K$ is the the matrix convex hull of its finitely many free extreme points (up to unitary equivalence) and such that $K$ has infinitely many (unitary equivalence classes of) matrix extreme points. To address this shortcoming and motivated by an infinite dimensional analogue, free extreme points were introduced in the setting matrix convex sets by Kleski in \cite{K14} and were subsequently studied in \cite{EHKM18} where they were called absolute extreme points. 
	
In this article we give little attention to matrix extreme points. Since we focus on free spectrahedra and since free spectrahedra are spanned by their free extreme points, this decision is well justified. However, free extreme points may fail to span general closed and bound matrix convex sets, see \cite{E18}. 
	
	
	
	\sssec{Free extreme points in infinite dimensions} Ironically, matrix convex sets and free extreme points first occurred (1960s) in the context of operators on an infinite dimensional Hilbert space, \cite{A69,A08}. Such ``operator convex sets" had an ``Arveson boundary" which much later was identified with free extreme points. The big open question was whether Arveson extreme points span. The big breakthrough occurred in \cite{DK15}; following on  \cite{DM05}, \cite{DK15} showed that if $X$ acts on a separable space $\cH$, then the free extreme points used to represent $X$ can be taken to act on a separable Hilbert space $\tilde{\cH}$. However, it remained unknown if $\tilde{\cH}$ could be taken to be finite dimensional if $\cH$ is finite dimensional. \cite{EH19}, stated here in Theorem \ref{thm:AbsSpanMin}, showed that what starts in finite dimensions stays in finite dimensions.
	
	\sssec{Curial matrix extreme points}
	We note that the even more restricted notion of ``crucial matrix extreme points" was recently introduced in \cite{PS19}. Roughly, crucial matrix extreme point fill the role of isolated extreme points for matrix convex sets. Crucial matrix extreme points and free extreme points are intended to serve different roles in the theory of extreme points for matrix convex sets. \cite[Proposition 2.6]{PS19} shows that crucial matrix extreme points of closed bounded matrix convex sets are necessarily free extreme points. Thus, based on above discussion, whenever the set of crucial matrix extreme points is a proper subset of the set of free extreme points, the crucial matrix extreme points cannot form a spanning set. Crucial matrix extreme points on the other hand are employed when studying a notion of the smallest operator tuple which defines a given matrix convex set. Also see \cite{DP20} for further discussion of crucial extreme points.

	\ssec{Main results and guide to the reader}
	
	Our results can be categorized in to two distinct flavors: theoretical and experimental, with experimental results receiving the most focus. Before discussing experimental results, a small collection of theoretical results is given in section in Section \ref{sec:Theory}. 
	
	Our theoretical results include formal solvability counts for the dimension of the kernel of $L_A (X)$ need for $X$ to be an (free or Euclidean) extreme point of a free spectrahedron $\cD_A$, see Theorems \ref{thm:BeanCount} and \ref{thm:BeanCountEuc}. We also improve on an algorithm originally presented in \cite{EH19} for representing an element of a free spectrahedron as a matrix convex combination of free extreme points, see Proposition \ref{prop:ArvDiAlg}.

	\sssec{Summary of experimental results}
	\label{sec:ExperimentalSummary}
	
	Our experiments generate extreme points of various free spectrahedra $\cD_A$ which are defined by irreducible $A$ by randomly generating and then optimizing linear functions over $\cD_A (n)$ for various choice of $n$. We then record and examine various properties of the optimizers. For details on how experiments were run see Section \ref{sec:ExperimentMethodology}.

The main properties of optimizers we consider are:

\ben
\item \label{it:FreeProportion} The proportion of irreducible optimizers which are free extreme points.
\item \label{it:KerDim} The dimension of the kernel of $L_A (\hat X)$ for irreducible optimizers $\hat X$, and the relationship between this kernel dimension and whether $\hat X$ is a free or Euclidean extreme point.
\item \label{it:IrredProp} The proportion of optimizers which are irreducible. 
\een

\paragraph{Free extreme proportion} Discussion of item \eqref{it:FreeProportion} takes place in Section \ref{sec:nonArvRare}. Here we present the very surprising data showing that an overwhelming majority of irreducible optimizers found in our experiments are free extreme points. Out of nearly 6,400,000 irreducible optimizers found in our experiments, only about  806 are not free extreme points (approx. $0.0126\%$). While one expects optimizers to be classical extreme points, we know no reason they should be free extreme points.

\paragraph{Dimension of $\ker L_A(\hat X)$} The dimension of $\ker L_A (\hat X)$ is primarily discussed in Sections \ref{sec:kerSize} and \ref{sec:KerDimDistri}. Section \ref{sec:kerSize} focuses on observed upper and lower bounds for the dimension of $\ker L_A (\hat X)$ for optimizers while Section \ref{sec:KerDimDistri} examines the distribution of observed kernel sizes.

  Theorems \ref{thm:BeanCount} and \ref{thm:BeanCountEuc} count, for fixed $g,d,n$ and dimension of $\ker L_A (\hat{ X})$, the number of equations and unknowns in certain homogeneous linear equations which govern whether the point $\hat{X} \in \smnrg$ is Euclidean or free extreme. In short, if the appropriate linear system has no nontrivial solutions, then the tuple of interest is Arveson or Euclidean extreme. Therefore, it is necessary that these solvability counts must be met for a tuple to be extreme. This gives a minimum size on $ \dim \ker L_A (\hat X)$ for $\hat X$ to be free extreme or Euclidean extreme. Dramatically, the minimum kernel dimension required for a tuple to be Arveson extreme is about twice as large as the minimum kernel dimension required for a tuple to be Euclidean extreme

By way of inequalities \eqref{eq:ArvKerCount} and \eqref{eq:EucKerCount}, the results presented in Section \ref{sec:kerSize} are consistent with the results Section \ref{sec:nonArvRare}. We find that the vast majority of optimizers $\hat X$ have $\dim \ker L_A (\hat X)$ greater than the lower bound given by inequality \eqref{eq:ArvKerCount} for a tuple to be a free extreme point. Additionally we find that $\dim \ker L_A (\hat X)$ is always nearly as big as this lower bound even if $\hat X$ is not a free extreme point of $\cD_A$.

Moreover, we find that for approximately 10\% percent of the irreducible extreme points found in our optimization experiments which are not free extreme, the dimension of $\ker L_A( \hat X)$  greater than or equal to the minimum bound given by \eqref{eq:ArvKerCount} needed for a tuple to be free extreme.  This important observation indicates that inequality \eqref{eq:ArvKerCount} does not (generically) serve as a sufficient condition for a tuple to be free extreme. In addition, there are large gaps between the smallest kernel sizes observed and the lower bound on the kernel size required for a tuple to be Euclidean extreme. Both these observations suggest that significant systematic degeneracy regularly occurs in the linear equations considered by inequalities \eqref{eq:ArvKerCount} and \eqref{eq:EucKerCount}.

Section \ref{sec:KerDimDistri} treats the dimension of the kernel of $L_A (\hat X)$ as a random variable and examines the distribution of the  observed kernel sizes. Our main finding in this section is that for spectrahedra defined by irreducible tuples in three or more variables, the distribution of kernel dimensions for optimizers generated by our experiments is well approximated by a Gaussian distribution. 

\paragraph{Proportion of optimizers which are irreducible} We find that the proportion of irreducible optimizers depends heavily on the method used to generate linear functionals. Two methods are used to randomly generate linear functionals, and linear functionals generated by these distinct methods are either called random coefficient (RC) or random positive trace (RPT) linear functions. The methods used are described in detail in Section \ref{sec:GenerationOfAell}.

In Section \ref{sec:irredExp} we examine the proportion of optimizers at level $n$ which are reducible as a function of $n$. We show evidence that when using RC linear functionals, this proportion is well fit by a decreasing exponential curve. However, when using RPT linear functionals, we find that this proportion can behave in a variety of ways and, notably, can increase as $n$ increases.

\paragraph{Software and data availability}
We provide the links to the NCSE package created for these experiments as well as the raw data produced by the experiment in Section \ref{sec:softwareanddata}.

\ssec{Acknowledgements}

We thank Maur\'icio de  Oliveira for development of NCSDP and other parts of NCAlgebra which were essential to the experiments in this paper and for helpful discussions. We also thank Tian Wu and Zinan Hu for helpful discussions related to this paper. In addition, we thank two anonymous referees for helpful comments.

	\section{Theory of Free Extreme Points underlying our experiments}

	\label{sec:Theory}
	
	This section  develops theory and
	gives some of the environment for our experiments. 
	It also contains a computational Caratheodory  algorithm. 
	We begin with essential definitions.

	\subsubsection{Minimal defining tuples} 
 	Given a tuple $A \in \smdrg$, we call $A$ a \df{defining tuple} for the free spectrahedron $\cD_A$. Any given free spectrahedron has infinitely many defining tuples. For example both $A$ and $A \oplus A$ are defining tuples for the free spectrahedron $\cD_A$. This leads to a small difficulty as in this article we wish to treat the size $d$ of the matrix tuple $A \in \smdrg$ which defines $\cD_A$ as a well-defined feature of $\cD_A$. 
 	
 	This difficulty may easily be overcome by introducing the notion of a \df{minimal defining tuple} for a free spectrahedron. Using \cite[Theorem 3.12 and Corollary 3.18]{HKM13} (also see \cite[Theorem 3.1]{Z17} for the unbounded case), we may simply define a minimal defining tuple for a free spectrahedron to be a tuple of minimal size that defines that the free spectrahedron. That is, if $A \in SM_{d_1} (\R^g)$ is a minimal defining tuple for the free spectrahedron $\cD_A$ and $B \in SM_{d_2} (\R^g)$ is any other defining tuple for $\cD_A$ then one must have $d_1 \leq d_2$. 
 	
 	\cite{HKM13} and \cite{Z17} show that any two minimal defining tuples of a given free spectrahedron are unitarily equivalent. Furthermore, it is shown that any defining tuple for a free spectrahedron must (up to unitary equivalence) contain a minimal defining tuple as a direct summand. 
 	
 	This has two consequence for our work. First, since we always use irreducible defining tuples in our experiments, our defining tuples are always minimal. Second, since the tuples we use are minimal defining tuples, the size of the tuples we use is indeed a well defined and intrinsic feature of the corresponding free spectrahedra.

\ssec{Free extreme points and the Arveson boundary} \ 
Free extreme points are closely related to the classical dilation theoretic Arveson boundary. Say a tuple $X$ in a matrix convex set $K$ is an \df{Arveson extreme point} of $K$ if
	\beq
	\label{eq:arvdef}
	Y= \begin{pmatrix} X & \beta \\
	\beta^T & \gamma \end{pmatrix}
	\in K
	\eeq
	implies $\beta=0$. The set of Arveson extreme points of a matrix convex set $K$ is called the \df{Arveson boundary} of $K$ and is denoted by $\arv K$ \index{$\arv K$}
	
	The following theorem relates the free, Arveson, and Euclidean extreme points of a free spectrahedron.
	
	\begin{thm}
		\label{thm:ArvImpliesEuc}
		Let $A \in \smdrg$ and let $\cD_A$ be the associated free spectrahedron.
		\ben 
		\item \label{it:FreeIsArvIrred} A tuple $X \in \cD_A (n)$ is a free extreme point of $\cD_A$ if and only if $X$ is an irreducible Arveson extreme point of $\cD_A$.  
		\item \label{it:ArvIsEuc} If $X \in \cD_A (n)$ is an Arveson extreme point of $\cD_A$, then $X$ is a Euclidean extreme point of $\cD_A$. 
		\een
	\end{thm}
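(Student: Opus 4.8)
\emph{Proof proposal.} The plan is to prove part~(\ref{it:ArvIsEuc}) directly, then the two implications of part~(\ref{it:FreeIsArvIrred}), with the implication ``$(\Leftarrow)$'' being the main work.

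\smallskip
\noindent\textbf{Part~(\ref{it:ArvIsEuc}).} Suppose $X\in\cD_A(n)$ is an Arveson extreme point and $X=tY+(1-t)Z$ with $Y,Z\in\cD_A(n)$ and $t\in(0,1)$. Then $Y\oplus Z\in\cD_A(2n)$ (free spectrahedra are closed under direct sums), and conjugating by the orthogonal matrix $U=\begin{pmatrix}\sqrt{t}\,I_n & \sqrt{1-t}\,I_n\\ \sqrt{1-t}\,I_n & -\sqrt{t}\,I_n\end{pmatrix}$ gives
\[
U^T(Y\oplus Z)\,U=\begin{pmatrix}X & \sqrt{t(1-t)}\,(Y-Z)\\ \sqrt{t(1-t)}\,(Y-Z) & (1-t)Y+tZ\end{pmatrix}\in\cD_A(2n),
\]
where $Y-Z$ is the tuple of coordinatewise differences. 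This tuple has the shape appearing in \eqref{eq:arvdef} with $\beta=\sqrt{t(1-t)}\,(Y-Z)$, so the Arveson property of $X$ forces $\beta=0$, hence $Y=Z=X$; thus $X$ is a classical extreme point of $\cD_A(n)$, i.e.\ a Euclidean extreme point of $\cD_A$.

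\smallskip
\noindent\textbf{Part~(\ref{it:FreeIsArvIrred}), $(\Leftarrow)$.} Let $X\in\cD_A(n)$ be an irreducible Arveson extreme point and suppose $X=\sum_{i=1}^{k}V_i^TY^iV_i$ is a matrix convex combination with all $V_i\neq0$. Put $V=\operatorname{col}(V_1,\dots,V_k)\colon\R^n\to\bigoplus_i\R^{n_i}$ — an isometry since $V^TV=\sum_iV_i^TV_i=I_n$ — and $Y=\bigoplus_iY^i\in\cD_A$, so $X=V^TYV$. Completing $V$ to an orthogonal $U=[\,V\ \ W\,]$ gives $U^TYU=\begin{pmatrix}X & V^TYW\\ W^TYV & W^TYW\end{pmatrix}\in\cD_A$, so the Arveson property forces $V^TY_jW=0$ for all $j$; equivalently $\operatorname{ran}V$ is a reducing subspace for each $Y_j$ and $V$ identifies $Y|_{\operatorname{ran}V}$ with $X$. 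Fix $i_0$, let $E_{i_0}\cong\R^{n_{i_0}}$ be the $i_0$-th summand of $\bigoplus_i\R^{n_i}$ (reducing for $Y$ since $Y$ is block diagonal), $P_{i_0}$ the projection onto $E_{i_0}$, and $S=P_{i_0}(\operatorname{ran}V)\subseteq E_{i_0}$. As $P_{i_0}$ commutes with every $Y_j$ and $\operatorname{ran}V$ is $Y$-reducing, $S$ is invariant and hence (since $Y^{i_0}$ is symmetric) reducing for $Y^{i_0}$, and $T:=P_{i_0}|_{\operatorname{ran}V}\colon\operatorname{ran}V\to S$ intertwines $Y|_{\operatorname{ran}V}$ with $Y^{i_0}|_S$. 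Since $Y|_{\operatorname{ran}V}\cong X$ is irreducible, $\ker T$ (a reducing subspace of $Y|_{\operatorname{ran}V}$) is $0$ or everything, and it cannot be everything because $S=0$ would give $P_{i_0}V=0$, i.e.\ $V_{i_0}=0$. Thus $T$ is a bijective intertwiner, so $T^TT$ is a symmetric element of the commutant of $\{Y_j|_{\operatorname{ran}V}\}$, which is trivial by irreducibility over $\R$; hence $T^TT=cI$ with $c>0$ and $c^{-1/2}T$ is a unitary intertwiner. Therefore $Y^{i_0}|_S\cong X$, and with $Z^{i_0}:=Y^{i_0}|_{E_{i_0}\ominus S}\in\cD_A$ we get $Y^{i_0}\cong X\oplus Z^{i_0}$, with $Z^{i_0}$ absent precisely when $n_{i_0}=n$. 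As $i_0$ was arbitrary, $X$ is a free extreme point.

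\smallskip
\noindent\textbf{Part~(\ref{it:FreeIsArvIrred}), $(\Rightarrow)$, and the main obstacle.} That a free extreme point $X$ is irreducible is immediate: if $X$ were unitarily equivalent to a nontrivial $A\oplus B$, then (as $L_A(X)\cong L_A(A)\oplus L_A(B)\succeq0$ forces $A,B\in\cD_A$) the two coordinate embeddings write $X$ as a matrix convex combination of $A,B$ with blocks of size $<n$, violating the definition. For the Arveson property, first record two routine consequences of matrix convexity and \eqref{eq:arvdef}: Arveson extreme points are closed under direct sums and under passing to direct summands. Now let $X$ be free extreme and $W=\begin{pmatrix}X&\beta\\\beta^T&\gamma\end{pmatrix}\in\cD_A$ with $\gamma$ of positive size (otherwise $\beta=0$ already). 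By Theorem~\ref{thm:AbsSpanMin} (applicable when $\cD_A$ is bounded; in general one dilates $X$ in finitely many steps to an Arveson extreme tuple) write $W=\sum_iV_i^TE^iV_i$ with each $E^i\in\abs\cD_A$, hence — by $(\Leftarrow)$ — each $E^i$ Arveson extreme, so $\widehat E:=\bigoplus_iE^i$ is Arveson extreme. Completing $\operatorname{col}(V_i)$ to an orthogonal matrix exhibits $\widehat E$, up to unitary equivalence, as a tuple with $X$ in its leading $n\times n$ corner; since $\widehat E$ has size $>n$ and $X$ is free extreme, $\widehat E\cong X\oplus Z$ for some $Z\in\cD_A$, and since $\widehat E$ is Arveson extreme so is $X\oplus Z$, hence so is its summand $X$. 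I expect the main obstacle to be the localization step in $(\Leftarrow)$ — passing from ``$X$ sits, up to unitary equivalence, as a reducing subspace of the single big tuple $Y=\bigoplus_iY^i$'' to ``$X$ is a direct summand of \emph{each} $Y^{i_0}$'' — since this is exactly where both hypotheses must be used (the Arveson property to make $\operatorname{ran}V$ reducing for $Y$, and irreducibility of $X$, via its trivial symmetric commutant over $\R$, to promote the a priori merely bijective intertwiner $T$ to a unitary), and where one must keep the size bookkeeping ($n_{i_0}\ge n$, with $Z^{i_0}$ absent exactly when $n_{i_0}=n$) aligned with the definition of free extreme point.
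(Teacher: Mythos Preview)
Your proofs of Part~(\ref{it:ArvIsEuc}) and of the implication $(\Leftarrow)$ in Part~(\ref{it:FreeIsArvIrred}) are correct and essentially reproduce the arguments from \cite{EHKM18} and \cite{EH19} that the paper merely cites (the paper gives no self-contained proof here). Your handling of the localization step in $(\Leftarrow)$ --- using the Arveson property to make $\operatorname{ran}V$ reducing for $Y$, then using irreducibility over $\R$ via the trivial symmetric commutant to upgrade the bijective intertwiner $T$ to a scaled isometry --- is exactly the crux of that direction and is done correctly.

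There is, however, a genuine gap in your argument that a free extreme point is Arveson extreme. You write $W$ as a matrix convex combination of free extreme points $E^i$ and then assert ``by $(\Leftarrow)$, each $E^i$ is Arveson extreme.'' But $(\Leftarrow)$ is the implication \emph{(irreducible Arveson) $\Rightarrow$ (free)}, not \emph{(free) $\Rightarrow$ (Arveson)}; the latter is precisely what you are proving, so the step is circular. Your parenthetical alternative (``in general one dilates $X$ in finitely many steps to an Arveson extreme tuple'') would remove the circularity --- $\widehat{E}$ would then be Arveson by construction --- but the finite dilation procedure also requires boundedness of $\cD_A$, so the general statement remains unproved.

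A short direct argument avoids both problems. Let $X$ be free extreme and $W=\begin{pmatrix}X&\beta\\\beta^T&\gamma\end{pmatrix}\in\cD_A$ with $\beta\in M_{n\times 1}(\R)^g$. Apply the definition of free extreme to the single-term matrix convex combination $X=\begin{pmatrix}I_n\\0\end{pmatrix}^T W\begin{pmatrix}I_n\\0\end{pmatrix}$: since $W$ has size $n+1>n$, there is $z\in\cD_A(1)$ with $W\cong X\oplus z$. Comparing $\tr(W_j)=\tr(X_j)+\gamma_j$ with $\tr(X_j\oplus z_j)=\tr(X_j)+z_j$ gives $\gamma=z$; then comparing
\[
\tr(W_j^2)=\tr(X_j^2)+2\|\beta_j\|^2+\gamma_j^2
\qquad\text{with}\qquad
\tr\big((X_j\oplus z_j)^2\big)=\tr(X_j^2)+z_j^2
\]
forces $\beta_j=0$ for every $j$. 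For $\beta\in M_{n\times m}(\R)^g$ one reduces to this case by compressing $W$ to the first $n$ coordinates together with one additional coordinate at a time. This argument needs neither boundedness nor Theorem~\ref{thm:AbsSpanMin}. Incidentally, this also shows that the ``main obstacle'' is not where you located it: the $(\Leftarrow)$ localization you flagged is fine, while the difficulty you actually ran into was in $(\Rightarrow)$.
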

	\begin{proof}

	\cite[Theorem 1.1]{EHKM18} proves \eqref{it:FreeIsArvIrred} and \eqref{it:ArvIsEuc} when working over $\C$. The \cite{EHKM18} proof of \eqref{it:ArvIsEuc} can be used over $\R$ without change. An adapted proof of \eqref{it:FreeIsArvIrred} which works over $\R$ is given by \cite[Theorem 1.2]{EH19}.
	\end{proof}
	
Given a free spectrahedron $\cD_A$, we say $X$ is a \df{non-Arveson extreme point} of $\cD_A$ if $X$ is a Euclidean extreme point of $\cD_A$ but $X$ is not an Arveson extreme point of $\cD_A$. While our focus in this article is on free extreme points, we use the terminology non-Arveson extreme point to emphasize that it is not a simple failure of irreducibility that prevents the tuple of interest from being free extreme.

	\sssec{Extreme points and linear systems}
	\label{sec:ExtLinSys}
	
	 	The classification of free extreme points as irreducible Arveson extreme points allows one to determine if a point is free extreme by solving a linear system. Given a free spectrahedron $\cD_A$ and a tuple $X \in \cD_A$ set
	\[
	\kAX:=\dim \ker L_A (X)
	\]
	and let $P_{A,X}: \R^{\kAX} \to \R^{nd}$ denote the inclusion map of $\R^{\kAX}$ onto the kernel of $L_A(X)$. In other words, $P_{A,X}$ is a matrix whose columns form an orthonormal basis for the kernel of $L_A(X)$.
	
	\begin{prop}
	\label{prop:ExtremeLinSys}
	Let $\cD_A$ be a free spectrahedron, and let $X \in \cD_A$. 
	\ben
	\item
	\label{it:ArvLinSystem}
	$X$ is an Arveson extreme point of $\cD_A$ if and only if the only solution to the linear system
	\beq
	\label{eq:ArvLinSystem}
	\Lambda_A (\beta^T) P_{A,X}=(A_1 \otimes \beta^T_1+ \dots + A_g \otimes \beta^T_g)P_{A,X}=0_{d \times \kAX}
	\eeq
	in the unknown $\beta \in M_{n \times 1} (\R)^g$ is $0 \in M_{n \times 1} (\R)^g$.
	\item
	\label{it:EucLinSystem}
	$X$ is an Euclidean extreme point of $\cD_A$ if and only if the only solution to the homogeneous linear system 
	\beq
	\label{eq:EucLinSystem}
	\Lambda_A (\beta^T) P_{A,X}=(A_1 \otimes \beta_1^T+ \dots + A_g \otimes \beta_g^T)P_{A,X}=0_{dn \times \kAX}
	\eeq
	in the unknown $\beta \in \smnrg$ is $0 \in \smnrg$
	\een
	\end{prop}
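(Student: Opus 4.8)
The plan is to turn each equivalence into a positivity question about a block pencil and then to read off \eqref{eq:ArvLinSystem} (resp.\ \eqref{eq:EucLinSystem}) as exactly the obstruction to a nontrivial positive completion (resp.\ positive perturbation). Part \eqref{it:ArvLinSystem} uses a one-step dilation together with the definition \eqref{eq:arvdef}; part \eqref{it:EucLinSystem} uses the perturbation $X\pm\beta$; the two arguments run in parallel and both rely only on elementary facts about positive semidefinite matrices.

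For \eqref{it:ArvLinSystem} I would begin with the canonical shuffle: there is a fixed permutation unitary $U$ with
\[
U^T\, L_A\!\begin{pmatrix} X & \beta \\ \beta^T & \gamma\end{pmatrix}\! U
=
\begin{pmatrix} L_A(X) & -\Lambda_A(\beta) \\ -\Lambda_A(\beta)^T & L_A(\gamma)\end{pmatrix},
\qquad
\Lambda_A(\beta):=\textstyle\sum_i A_i\otimes\beta_i\in M_{dn\times d}(\R),
\]
where $\Lambda_A(\beta)^T=\Lambda_A(\beta^T)$ because each $A_i$ is symmetric. By \eqref{eq:arvdef}, $X$ is \emph{not} an Arveson extreme point exactly when the right-hand block matrix is positive semidefinite for some scalar tuple $\gamma$ and some $\beta\in M_{n\times1}(\R)^g$ with $\beta\neq0$. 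For the forward implication I would evaluate the quadratic form of this block matrix on vectors whose first block is an arbitrary $v\in\ker L_A(X)$ and whose second block is $tw$, $t\in\R$: the value is a nonnegative quadratic in $t$ with vanishing constant term, so its linear coefficient, a multiple of $\langle\Lambda_A(\beta)w,v\rangle$, must vanish for all $w$, i.e.\ $\Lambda_A(\beta)^Tv=0$ for every $v\in\ker L_A(X)$, which is precisely $\Lambda_A(\beta^T)P_{A,X}=0$. For the converse, a nonzero solution $\beta$ of \eqref{eq:ArvLinSystem} gives $\ker L_A(X)\subseteq\ker\Lambda_A(\beta)^T$, equivalently $\ran\Lambda_A(\beta)\subseteq\ran L_A(X)$; taking $\gamma=0$ and rescaling $\beta\mapsto\epsilon\beta$, a generalized Schur complement / small-perturbation argument shows the block matrix is positive semidefinite for small $\epsilon>0$, so the corresponding dilation lies in $\cD_A$ and witnesses that $X$ is not Arveson extreme.

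For \eqref{it:EucLinSystem} I would first record the standard fact that $X\in\cD_A(n)$ fails to be a classical extreme point of $\cD_A(n)$ if and only if there is $\beta\in\smnrg$, $\beta\neq0$, with $X+\beta,\,X-\beta\in\cD_A(n)$, i.e.\ $L_A(X)\pm\Lambda_A(\beta)\succeq0$ (here $\Lambda_A(\beta)=\sum_iA_i\otimes\beta_i\in\R^{dn\times dn}$, and $\Lambda_A(\beta^T)=\Lambda_A(\beta)$ since $\beta=\beta^T$). Forward: for $v\in\ker L_A(X)$, positivity of both $L_A(X)\pm\Lambda_A(\beta)$ at $v$ forces $\langle\Lambda_A(\beta)v,v\rangle=0$; since $M:=L_A(X)+\Lambda_A(\beta)\succeq0$, a null vector of its quadratic form lies in $\ker M$, so $\Lambda_A(\beta)v=Mv-L_A(X)v=0$, giving $\Lambda_A(\beta^T)P_{A,X}=\Lambda_A(\beta)P_{A,X}=0$. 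Converse: a nonzero solution $\beta$ of \eqref{eq:EucLinSystem} gives $\ker L_A(X)\subseteq\ker\Lambda_A(\beta)$, hence also $\ran\Lambda_A(\beta)\subseteq\ran L_A(X)$ by symmetry of $\Lambda_A(\beta)$; in the orthogonal decomposition $\R^{dn}=\ran L_A(X)\oplus\ker L_A(X)$ both $L_A(X)$ and $\Lambda_A(\beta)$ are block diagonal with zero $\ker L_A(X)$-corner and with $L_A(X)$ positive definite on $\ran L_A(X)$, so $L_A(X)\pm\epsilon\Lambda_A(\beta)\succeq0$ for small $\epsilon>0$ and $X$ is not Euclidean extreme.

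I do not anticipate a genuine obstacle; the main care needed is bookkeeping: setting up the canonical shuffle correctly, keeping the block sizes straight ($\beta_i\in M_{n\times1}(\R)$ in part \eqref{it:ArvLinSystem} versus $\beta_i\in\R^{n\times n}$ in part \eqref{it:EucLinSystem}), recording that $\Lambda_A(\beta)^T=\Lambda_A(\beta^T)$, and noting that the definition \eqref{eq:arvdef} of Arveson extreme point is exactly the one-step-dilation condition produced by the block computation, so no reduction from higher-dimensional dilations is required. The one place where positivity (rather than mere vanishing of a quadratic form) is used essentially is the implication $\langle Mv,v\rangle=0\Rightarrow Mv=0$ for $M\succeq0$; everything else is linear algebra together with the definitions and the characterization recorded in Theorem \ref{thm:ArvImpliesEuc}.
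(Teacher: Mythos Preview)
Your proposal is correct and follows essentially the same line as the paper's proof: both start from the canonical shuffle to rewrite $L_A\!\left(\begin{smallmatrix}X&\beta\\\beta^T&\gamma\end{smallmatrix}\right)$ as a $2\times2$ block matrix, derive the kernel containment $\ker L_A(X)\subseteq\ker\Lambda_A(\beta^T)$ from positivity, and run the converse by taking $\gamma=0$ together with a rescaling of $\beta$.

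The one genuine, if minor, difference is in item \eqref{it:EucLinSystem}. The paper invokes the block--matrix characterization of Euclidean extreme points from \cite[Corollary~2.3(iv)]{EHKM18} (namely, $X$ is Euclidean extreme iff every symmetric dilation $\left(\begin{smallmatrix}X&\beta\\\beta&\gamma\end{smallmatrix}\right)\in\cD_A$ forces $\beta=0$), and then repeats the Schur--complement argument from item \eqref{it:ArvLinSystem}. You instead use the direct perturbation characterization ($X$ not extreme iff $L_A(X)\pm\Lambda_A(\beta)\succeq0$ for some nonzero symmetric $\beta$) and the elementary fact that $\langle Mv,v\rangle=0$ with $M\succeq0$ implies $Mv=0$. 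Your route is a bit more self-contained since it avoids importing the cited characterization; the paper's route has the virtue of making the two parts of the proposition formally parallel. Both arrive at \eqref{eq:EucLinSystem} by the same mechanism.
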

	
\begin{proof}
Items \eqref{it:ArvLinSystem} and \eqref{it:EucLinSystem} are immediate consequences of \cite[Lemma 2.1 (3)]{EH19} and \cite[Corollary 2.3]{EHKM18}, respectively. Also see \cite[Corollary 3]{RG95} for item \eqref{it:EucLinSystem}. For the reader's convenience, we outline a self contained proof. 

To prove item \eqref{it:ArvLinSystem}, set 
\[
Y= \begin{pmatrix}
X & \beta \\
\beta^T & \gamma
\end{pmatrix}.
\]
Conjugating by permutation matrices, sometimes called canonical shuffles (see \cite{P02}), shows that the evaluation $L_A (Y)$ is unitarily equivalent to the matrix
\[
\begin{pmatrix}
L_A (X) & -\Lambda_A (\beta) \\
-\Lambda_A (\beta^T) & L_A (\gamma) 
\end{pmatrix}.
\]
Taking the Schur complement of this matrix shows $Y \in \cD_A$, that is $L_A (Y) \succeq 0$, if and only if 
\beq
\label{eq:NCSchur}
\begin{split}
L_A (X) - \Lambda_A (\beta) \left(L_A (\gamma)\right)^\dagger \Lambda_A (\beta^T) \succeq 0 
\mathand
L_A (\gamma) \succeq 0 \\
\mathand
(I - L_A (\gamma) L_A(\gamma)^\dagger) \Lambda_A (\beta^T)=0.
\end{split}
\eeq
Here $\dagger$ denotes the Moore-Penrose pseudoinverse. 
It follows that if there is a nonzero $\beta$ such that $Y \in \cD_A$, that $\beta$ must satisfy
\beq
\label{eq:ExtremeKerContain}
\ker L_A (X) \subseteq \ker \Lambda_A (\beta^T),
\eeq
hence equation \eqref{eq:ArvLinSystem} as a nonzero solution. 

Conversely, if there there is a $\beta$ satisfying \eqref{eq:ExtremeKerContain}, then by taking $\gamma=0$ the argument above reverses to show $X$ is not Arveson extreme

The proof of item \eqref{it:EucLinSystem} follows a nearly identical strategy using \cite[Corollary 2.3 (iv)]{EHKM18} which shows that a tuple $X$ is a Euclidean extreme point of $\cD_A (n)$ if and only if, if $\beta \in \smnrg$ satisfies 
\[
\begin{pmatrix}
X & \beta \\
\beta & \gamma 
\end{pmatrix} \in \cD_A
\]
then $\beta=0$. 
\end{proof}	
	
	We emphasize the important distinction that $\beta$ is a tuple of symmetric matrices when working with Euclidean extreme points, but for Arveson extreme points, $\beta$ is simply a $g$-tuple of vectors. 
	
	These characterizations of extreme points are valuable in practice as they allow for numerical verification that a given tuple is a Arveson or Euclidean extreme point of a matrix convex set. One may further check if a given Arveson extreme point is free extreme (i.e. is irreducible) by determining the  symmetric commutant of the tuple, see Section \ref{sec:NumVerIrred}.
	
	\subsubsection{Kernel dimension counts corresponding to extremality}
	
 We now examine the dimension of the kernel of $L_A (X)$ which is required for $X$ to be an extreme point of $\cD_A$. In the following result we consider various amounts of irreducibility for the tuple $A$. For one we consider the case where a minimal\footnote{As a consequence of \cite[Theorem 3.1]{Z17}, the existence of any defining tuple of diagonal matrices implies the existence of a minimal defining tuple of diagonal matrices. However, a non-minimal defining tuple for a free polytope need not be simultaneously diagonalizable. } defining  tuple $A$ for the spectrahedron $\cD_A$ is simultaneously diagonalizable. In this case call $\cD_A$ a \df{free polytope}. We first consider Arveson extreme points.

	\begin{thm}
		\label{thm:BeanCount}
		Let $\cD_A \subset \smrg$ be a free spectrahedron where $A \in \smdrg$ is a minimal defining tuple for $\cD_A$, and let $X \in \cD_A (n)$.
		\begin{enumerate}
			\item \label{it:BeanCountIrred} If $X \in \arv \cD_A (n)$, then $gn \leq d\kAX$.
			\item \label{it:BeanCountPolytope}  If $\cD_A$ is a free polytope and $X \in \arv \cD_A$, then $gn \leq \kAX$.
			\item \label{it:BeanCountGeneral} Suppose $A$ is unitarily equivalent to the tuple $\oplus_{j=1}^\ell A^j$ where $A^j \in SM_{d_j} (\R)^g$ for each $j$. For $j=1, \dots, \ell$, set $k^j_{A,X}= \dim \ker L_{A^j} (X)$. If $X \in \arv \cD_A$, then
			\beq
			\label{eq:ArvKerCount}
			 gn \leq \sum_{j=1}^\ell d_j k^j_{A,X}.
			\eeq
		\end{enumerate}
		
	\end{thm}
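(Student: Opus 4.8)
The plan is to derive all three parts from the linear-system characterization of Arveson extremality in Proposition~\ref{prop:ExtremeLinSys}\eqref{it:ArvLinSystem}, together with the elementary linear-algebra fact that a homogeneous system of linear equations over $\R$ in $N$ unknowns whose only solution is $0$ must consist of at least $N$ scalar equations (its coefficient matrix has rank $N$, hence at least $N$ rows). So every part reduces to counting unknowns against equations in the system \eqref{eq:ArvLinSystem}.

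For \eqref{it:BeanCountIrred}: if $X \in \arv\cD_A(n)$, then by Proposition~\ref{prop:ExtremeLinSys}\eqref{it:ArvLinSystem} the only $\beta \in M_{n \times 1}(\R)^g$ solving $\Lambda_A(\beta^T)P_{A,X} = 0_{d \times \kAX}$ is $\beta = 0$. The map $\beta \mapsto \Lambda_A(\beta^T)P_{A,X}$ is $\R$-linear, its domain $M_{n\times 1}(\R)^g$ has dimension $gn$, and its codomain, the space of $d \times \kAX$ real matrices, has dimension $d\,\kAX$. Triviality of the kernel (i.e.\ injectivity) forces $gn \le d\,\kAX$.

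For \eqref{it:BeanCountGeneral}: I would fix an orthogonal $U \in \R^{d\times d}$ with $A_i = U\big(\oplus_{j=1}^\ell A_i^j\big)U^T$ for every $i$, and then check that conjugation by $U$ (respectively by $U \otimes I_n$) diagonalizes the relevant objects compatibly: $L_A(X) = (U\otimes I_n)\big(\oplus_j L_{A^j}(X)\big)(U\otimes I_n)^T$, so $\ker L_A(X) = (U\otimes I_n)\big(\oplus_j \ker L_{A^j}(X)\big)$ and we may take $P_{A,X} = (U\otimes I_n)\big(\oplus_j P_{A^j,X}\big)$, where $P_{A^j,X}$ is the inclusion of $\ker L_{A^j}(X)$; moreover $A_i \otimes \beta_i^T = U\big(\oplus_j A_i^j \otimes \beta_i^T\big)(U^T\otimes I_n)$, since Kronecker multiplication on the left factor preserves block-diagonal structure. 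Combining these identities gives
\[
\Lambda_A(\beta^T)P_{A,X} \;=\; U\cdot\Big(\bigoplus_{j=1}^\ell \Lambda_{A^j}(\beta^T)P_{A^j,X}\Big),
\]
so, as $U$ is invertible, the system $\Lambda_A(\beta^T)P_{A,X} = 0$ is equivalent to the joint system $\Lambda_{A^j}(\beta^T)P_{A^j,X} = 0$ for $j = 1,\dots,\ell$. This is a homogeneous linear system in the same $gn$ unknowns $\beta$, now with $\sum_{j=1}^\ell d_j k^j_{A,X}$ scalar equations; Arveson extremality of $X$ makes its solution set trivial, so $gn \le \sum_j d_j k^j_{A,X}$, which is \eqref{eq:ArvKerCount}. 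Part \eqref{it:BeanCountIrred} is just the case $\ell = 1$, but I would keep it separate since it needs no decomposition.

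Finally, \eqref{it:BeanCountPolytope} follows from \eqref{it:BeanCountGeneral}: since $A$ is a minimal defining tuple and $\cD_A$ is a free polytope, $A$ is simultaneously diagonalizable, so we may take $\ell = d$ with each $A^j \in SM_1(\R)^g$, i.e.\ $d_j = 1$; then $\sum_j d_j k^j_{A,X} = \sum_j k^j_{A,X} = \dim\big(\oplus_j \ker L_{A^j}(X)\big) = \kAX$. I expect the only genuinely fiddly step to be the block-decomposition bookkeeping in \eqref{it:BeanCountGeneral}: confirming that the single unitary $U$ simultaneously intertwines the pencil evaluation, the kernel inclusion, and the linear map $\beta\mapsto\Lambda_A(\beta^T)$, so that the matrix equation decouples summand by summand without altering the counts of equations or unknowns. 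Once that is pinned down, each part is a one-line dimension comparison. (Minimality of $A$ is used only in \eqref{it:BeanCountPolytope}, via simultaneous diagonalizability; \eqref{it:BeanCountIrred} and \eqref{it:BeanCountGeneral} hold for any defining tuple.)
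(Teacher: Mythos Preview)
Your proposal is correct and follows essentially the same approach as the paper: reduce everything to Proposition~\ref{prop:ExtremeLinSys}\eqref{it:ArvLinSystem}, observe that the decoupled systems $\Lambda_{A^j}(\beta^T)P_{A^j,X}=0$ together comprise $\sum_j d_j k^j_{A,X}$ scalar equations in $gn$ unknowns, and conclude by the rank--nullity inequality. The paper simply asserts the equivalence of \eqref{eq:ArvLinSystem} with the block-decoupled system and treats \eqref{it:BeanCountIrred} and \eqref{it:BeanCountPolytope} as special cases of \eqref{it:BeanCountGeneral}, whereas you spell out the $U\otimes I_n$ bookkeeping explicitly; that extra care is fine and does not change the argument.
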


	\begin{proof}
		
 We have assumed $A$ is a minimal defining tuple so items \eqref{it:BeanCountIrred} and \eqref{it:BeanCountPolytope} are special cases of item \eqref{it:BeanCountGeneral}. Therefore, it is sufficient to prove item \eqref{it:BeanCountGeneral}.
		
		To this end assume $A$ is unitarily equivalent to the tuple $\oplus_{j=1}^\ell A^j$ where each  $A^j \in SM_{d_j} (\R)^g$. For each $j=1,\dots, \ell,$ let $P_{A^j,X}$ be a matrix whose columns for an orthonormal basis for the kernel of $L_{A^j} (X)$ and set $k^j_{A,X}= \dim \ker L_{A^j} (X)$. Then the linear system \eqref{eq:ArvLinSystem} has a nonzero solution if and only if
		\beq
		\label{eq:GenArvLinSys}
		\Lambda_{A^j} (\beta^T) P_{A^j,X} = 0_{d_j \times k^j_{A,X}} \quad \quad \mathrm{for} \quad \quad j=1, \dots, \ell
		\eeq
	 has a nonzero solution where $\beta \in M_{n \times 1} (\R)^g$. 
		
		For each fixed $j$, the equation $\Lambda_{A^j} (\beta^T) P_{A^j,X} = 0_{d_j \times k^j_{A,X}}$ is a system of $d_j k^j_{A,X}$ linear equations. Therefore, equation \eqref{eq:GenArvLinSys} is a system of $\sum_{j=1}^\ell d_j k^j_{A,X}$ linear equations in $ng$ unknowns. It follows that if 
		\[
		\sum_{j=1}^\ell d_j k^j_{A,X} < ng,
		\]
		then there is a nonzero $\beta$ which is a solution to equation \eqref{eq:GenArvLinSys} from which the result follows. 
	\end{proof}
	
	We now give the solvability count for Euclidean extreme points.
	
	\begin{thm}
		\label{thm:BeanCountEuc}
		Let $A \in \smdrg$ and let $\cD_A \subset \smrg$ be a free spectrahedron.
		\begin{enumerate}
			\item \label{it:BeanCountEucIrred} If $X \in \euc \cD_A (n)$, then $\frac{g(n+1)}{2} \leq d\kAX$. 
			
			\item \label{it:BeanCountEucPolytope} If $\cD_A$ is a free polytope and $X \in \euc \cD_A$, then $\frac{g(n+1)}{2}  \leq \kAX$.
			
			\item \label{it:BeanCountGeneral} Suppose $A$ is unitarily equivalent to the tuple $\oplus_{j=1}^\ell A^j$ where $A^j \in SM_{d_j} (\R)^g$ for each $j$. For $j=1, \dots, \ell$, set $k^j_{A,X}= \dim \ker L_{A^j} (X)$. If $X \in \euc \cD_A$, then
			\beq
			\label{eq:EucKerCount}
			 \frac{g(n+1)}{2} \leq \sum_{j=1}^\ell d_j k^j_{A,X}.
			\eeq
		\end{enumerate}
	\end{thm}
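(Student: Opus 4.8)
The plan is to run the proof of Theorem~\ref{thm:BeanCount} again, this time feeding in the Euclidean linear system of Proposition~\ref{prop:ExtremeLinSys}\eqref{it:EucLinSystem} in place of the Arveson one. As in the Arveson case, items (1) and (2) are special cases of item (3). Item (1) is the case $\ell=1$, $A^1=A$, $d_1=d$ (and, unlike in Theorem~\ref{thm:BeanCount}, no minimality of $A$ is needed, since Proposition~\ref{prop:ExtremeLinSys}\eqref{it:EucLinSystem} applies to any defining tuple). For item (2) one takes $A$ to be a minimal defining tuple of the free polytope $\cD_A$, which may then be assumed simultaneously diagonal (a minimal diagonal defining tuple exists by \cite[Theorem 3.1]{Z17}, cf.\ the footnote above), so that $A$ is unitarily equivalent to $\oplus_{j=1}^{d}A^j$ with each $A^j\in SM_1(\R)^g$, hence $d_j=1$ and $\sum_j k^j_{A,X}=k_{A,X}$. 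So it suffices to prove \eqref{eq:EucKerCount}.

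For \eqref{eq:EucKerCount}, assume $A$ is unitarily equivalent to $\oplus_{j=1}^\ell A^j$ with $A^j\in SM_{d_j}(\R)^g$, say $U^T A_i U=\oplus_{j=1}^\ell A^j_i$ for all $i$. Since the Kronecker product distributes over direct sums in its left factor, conjugating $L_A(X)$ by $U\otimes I_n$ shows $L_A(X)$ is unitarily equivalent to $\oplus_{j=1}^\ell L_{A^j}(X)$; hence $\ker L_A(X)=\oplus_j \ker L_{A^j}(X)$, and, picking $P_{A^j,X}$ with orthonormal columns spanning $\ker L_{A^j}(X)$, the single system \eqref{eq:EucLinSystem} characterizing Euclidean extremality decouples into the $\ell$ block equations
\[
\Lambda_{A^j}(\beta^T)\,P_{A^j,X}=0_{d_j n\times k^j_{A,X}},\qquad j=1,\dots,\ell,
\]
in the one unknown $\beta\in\smnrg$. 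By Proposition~\ref{prop:ExtremeLinSys}\eqref{it:EucLinSystem}, $X\in\euc\cD_A(n)$ if and only if this homogeneous system has only the trivial solution.

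Now I would count unknowns against equations. Because $\beta$ is a $g$-tuple of \emph{symmetric} $n\times n$ matrices, the system has $g\cdot\frac{n(n+1)}{2}$ real unknowns. For each $j$, the block equation $\Lambda_{A^j}(\beta^T)P_{A^j,X}=0_{d_j n\times k^j_{A,X}}$ is a system of $d_j n\, k^j_{A,X}$ scalar linear equations, so altogether there are $n\sum_{j=1}^\ell d_j k^j_{A,X}$ equations. If $g\cdot\frac{n(n+1)}{2}>n\sum_{j=1}^\ell d_j k^j_{A,X}$, the homogeneous system has a nonzero solution $\beta$, so $X$ is not a Euclidean extreme point of $\cD_A$. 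Taking the contrapositive and cancelling the common factor $n$ gives $\frac{g(n+1)}{2}\le\sum_{j=1}^\ell d_j k^j_{A,X}$, which is \eqref{eq:EucKerCount}.

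I do not expect a genuine obstacle: this is the Euclidean twin of Theorem~\ref{thm:BeanCount}, and the whole content is keeping the bookkeeping straight. The two places that warrant care are (i) that the symmetry of $\beta$ is exactly what replaces the count $gn$ of the Arveson case by $g\cdot\frac{n(n+1)}{2}$ — this is the origin of the ``$n+1$'' in the bound — and that in the Euclidean system the right-hand block for index $j$ genuinely has $d_j n$ rows (not $d_j$), so that a factor of $n$ appears on both sides and cancels; and (ii) the blockwise decoupling of \eqref{eq:EucLinSystem}, i.e.\ that $L_A(X)$ is unitarily equivalent to $\oplus_j L_{A^j}(X)$ and $k_{A,X}=\sum_j k^j_{A,X}$, which is the same routine canonical-shuffle bookkeeping already used in the proofs of Proposition~\ref{prop:ExtremeLinSys} and Theorem~\ref{thm:BeanCount}.
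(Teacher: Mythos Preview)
Your proposal is correct and follows essentially the same approach as the paper: reduce to item (3), replace the Arveson linear system by the Euclidean one from Proposition~\ref{prop:ExtremeLinSys}\eqref{it:EucLinSystem}, and compare the $\frac{gn(n+1)}{2}$ unknowns against the $n\sum_j d_j k^j_{A,X}$ equations, cancelling the common factor of $n$. The paper's proof is a two-sentence sketch of exactly this, so your version is a faithful (and more detailed) expansion of it.
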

	
	\begin{proof}
		
		The proof of Theorem \ref{thm:BeanCountEuc} is nearly identical to that of Theorem \ref{thm:BeanCount}. The only difference is that one instead considers the linear system
		\beq
		\label{eq:GenEucLinSys}
		\Lambda_{A^j} (\beta^T) P_{A^j,X} = 0_{n d_j \times k^j_{A,X}} \quad \quad \mathrm{for} \quad \quad j=1, \dots, \ell
		\eeq
		where $\beta^T=\beta\in \smnrg$. This is a linear system of $\sum_{j=1}^\ell nd_j k^j_{A,X}$ equations in  $\frac{gn(n+1)}{2}$ unknowns. 
	\end{proof}
	
	\begin{rem}
	\label{rem:kerSize}

	Proposition \ref{prop:ExtremeLinSys} shows that determining if an element of a free spectrahedron is an Arveson or Euclidean extreme point is equivalent to solving homogeneous linear systems. Subsequently, Theorems \ref{thm:BeanCount}  and \ref{thm:BeanCountEuc} use counts on the number of equations vs. unknowns in the relevant linear systems. 
	
	Under generic conditions, if a homogeneous linear system has at least as many equations as unknowns, then the system has no nontrivial solutions. Thus, one may naively expect that if an element $X$ of a free spectrahedron $\cD_A$ is not an Arveson extreme point of $\cD_A$, then $ng > d\kAX$. As we see later in our experiments, while most non-Arveson extreme points that we find satisfy $ng > d\kAX$, a surprising amount do not ($82$ out of $806$).
	\end{rem}

\ssec{Representation in terms of  free extreme points}
\label{sec:NaturalFreeExt}

The following theorem gives a Caratheodory like bound on the number of free extreme points needed to recover an element of a bounded free spectrahedron through matrix convex combinations.

\begin{thm} \cite[Theorem 1.3]{EH19}
\label{thm:FreeCaratheodory}
Let $A \in \smdrg$ such that $\cD_A$ is a bounded free spectrahedron. Given a tuple $X \in \cD_A(n)$, let 
$\mu$ be the dimension of the space of solutions of
\beq
\label{eq:defmu}
\ker L_A (X) \subseteq \ker \Lambda_A (\beta^T)
\eeq
where $\beta \in M_{n \times 1} (\R)^g$. Then there exists an integer $k \leq \mu \leq ng$ such that $X$ is a matrix convex combination of free extreme points of $\cD_A$ whose sum of sizes is equal to $n+k \leq n(g+1)$. 

That is, there exist a collection of free extreme points $\{Z^j\}_{j=1}^m$ of $\cD_A$ such that each $Z^j$ is a tuple of $n_j \times n_j$ matrices and a collection of contractions $\{V_j\}_{j=1}^m$ such that
\[
X= \sum_{j=1}^m V_j^T Z^j V_j \mathand \sum_{j=1}^m V_j^T V_j =I_n \mathand \sum_{j=1}^m n_j = n+k \leq n(g+1). 
\]
In particular one has $m \leq n(g+1)$. 
\end{thm}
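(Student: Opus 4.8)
The plan is to reach a free-extreme-point representation of $X$ by first \emph{dilating} $X$ to an Arveson extreme point sitting inside $\cD_A$, and then splitting that Arveson extreme point into irreducible direct summands, which Theorem \ref{thm:ArvImpliesEuc}\eqref{it:FreeIsArvIrred} identifies as free extreme points of $\cD_A$. The easy half of the numerical claim is $\mu \le ng$: the set of $\beta \in M_{n\times1}(\R)^g$ satisfying \eqref{eq:defmu} is a linear subspace of a real vector space of dimension $ng$. So the real work is to produce a dilation whose increase in size is at most $\mu$.

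First I would run the iterated dilation. If $X$ is not already Arveson extreme, then by Proposition \ref{prop:ExtremeLinSys}\eqref{it:ArvLinSystem} (equivalently, the Schur-complement computation in its proof, specialized to $\gamma=0$) there is a nonzero $\beta \in M_{n\times1}(\R)^g$ with $\ker L_A(X) \subseteq \ker \Lambda_A(\beta^T)$, and then $\begin{pmatrix}X & \beta \\ \beta^T & 0\end{pmatrix} \in \cD_A$. Replacing $X$ by this tuple of size $n+1$ and repeating produces a chain of dilations of $X$; boundedness of $\cD_A$ guarantees the process terminates at a tuple $\hat X \in \cD_A(n+k)$ that is Arveson extreme and of which $X$ is a compression, i.e. $\hat X = \begin{pmatrix} X & B \\ B^T & C \end{pmatrix}$ in a suitable basis, equivalently $X = V^T \hat X V$ for the isometry $V = \begin{pmatrix} I_n \\ 0 \end{pmatrix}$.

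The heart of the matter is the bound $k \le \mu$. Writing $\hat X = \begin{pmatrix} X & B \\ B^T & C \end{pmatrix}$ with $B$ a $g$-tuple of $n\times k$ matrices, positive semidefiniteness of (a canonical shuffle of) $L_A(\hat X)$ forces $\ker L_A(X) \subseteq \ker \Lambda_A(B^T)$, which column by column says that each of the $k$ ``columns'' $\beta_{(1)}, \dots, \beta_{(k)} \in M_{n\times1}(\R)^g$ of $B$ is a solution of \eqref{eq:defmu}. If these $k$ tuples are linearly independent, then $k \le \mu$ and the count is finished; so the crux is to argue that an Arveson dilation of $X$ of minimal size may be taken with this independence property. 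The idea is that if the columns of $B$ are linearly dependent, an orthogonal change of basis on the newly adjoined coordinates zeroes out a column of $B$, after which compressing away the corresponding coordinate yields a strictly smaller dilation of $X$, and one drives this to a contradiction with minimality of the Arveson dilation. I expect this step---bounding the size of the Arveson dilation by $\mu$---to be the main obstacle; everything else is bookkeeping.

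Finally I would harvest the representation. Since $\hat X$ is Arveson extreme, decompose it up to unitary equivalence as $\hat X = \bigoplus_{j=1}^m Z^j$ with each $Z^j \in SM_{n_j}(\R)^g$ irreducible and $\sum_j n_j = n+k$. A direct summand of an Arveson extreme point is again an Arveson extreme point: a nontrivial dilation $\begin{pmatrix} Z^j & \beta \\ \beta^T & \gamma \end{pmatrix} \in \cD_A$ of some summand, direct-summed with the remaining $Z^i$ and then rearranged by a permutation (canonical shuffle), produces a nontrivial dilation of $\hat X$, a contradiction. Hence each $Z^j$ is an irreducible Arveson extreme point, so a free extreme point of $\cD_A$ by Theorem \ref{thm:ArvImpliesEuc}\eqref{it:FreeIsArvIrred}. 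Conjugating the compression $X = V^T \hat X V$ by the orthogonal matrix implementing the decomposition and partitioning the resulting isometry $W : \R^n \to \R^{n+k}$ into blocks $V_j : \R^n \to \R^{n_j}$ gives $X = \sum_{j=1}^m V_j^T Z^j V_j$ with $\sum_j V_j^T V_j = W^T W = I_n$ and $\sum_j n_j = n+k \le n+\mu \le n(g+1)$; since each summand has size at least $1$, also $m \le n+k \le n(g+1)$. This is exactly the asserted conclusion.
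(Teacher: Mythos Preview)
Your overall architecture---dilate $X$ to an Arveson extreme point, then split into irreducible summands and read off free extreme points via Theorem~\ref{thm:ArvImpliesEuc}\eqref{it:FreeIsArvIrred}---is exactly the route the paper (following \cite{EH19}) takes, and your ``harvest'' paragraph is correct. The difficulty is entirely in the step you yourself flag, and there your sketch does not go through as written.

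First, the termination claim. Taking an arbitrary nonzero $\beta$ solving \eqref{eq:defmu} and $\gamma=0$ at each step does \emph{not} obviously terminate just because $\cD_A$ is bounded; boundedness controls $\sum_i X_i^2 \preceq C I$, not the number of times a nontrivial $1$-dilation can be performed. Second, and more seriously, your minimality argument for $k\le\mu$ has a gap. After an orthogonal change of basis on the $k$ adjoined coordinates that zeroes the last column of $B$, the Arveson dilation takes the form
\[
\hat X \;=\; \begin{pmatrix} X & B' & 0 \\ (B')^T & C' & c \\ 0 & c^T & \gamma \end{pmatrix},
\]
and compressing to the upper-left $(n{+}k{-}1)\times(n{+}k{-}1)$ block does give a smaller dilation $Y$ of $X$ inside $\cD_A$---but $Y$ need not be Arveson extreme, so there is no contradiction with minimality among \emph{Arveson} dilations. (Only if $c=0$ as well would $\hat X = Y\oplus\gamma$ split and force $Y$ Arveson extreme; nothing in your argument arranges that.)

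The paper handles both issues at once with a different mechanism: it uses \emph{maximal} $1$-dilations (Algorithm~\ref{algo:ArvDiAlg}), where at each step one first maximizes the scalar $c$ in front of the chosen $\hat\beta$ and then takes $\hat\gamma$ to be a classical extreme point of the resulting one-variable spectrahedron. The point, supplied by \cite[Theorem~2.4]{EH19} and invoked in the proof of Proposition~\ref{prop:ArvDiAlg}, is that a maximal $1$-dilation strictly decreases the dimension of the solution space in \eqref{eq:defmu}; hence the process must stop in at most $\mu$ steps, giving termination and the bound $k\le\mu$ simultaneously. Your column-independence idea does not substitute for this monotonicity, so to complete your proof you should replace the naive dilation step by the maximal one and use the $\mu$-drop lemma from \cite{EH19}.
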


A recent work of Hartz and Lupini establishes a different Caratheodory theorem for general closed and bounded matrix convex sets, see \cite[Theorem 1.8]{HL+}. This result shows that if $K \subset \smrg$ is the matrix convex hull of some set $E$ and $X \in K(n)$, then $X$ can be expressed as a matrix convex combination of elements of $E$ with sum of sizes at most $n^2 (g+1)$. For bounded free spectrahedra, the bound obtained from Theorem \ref{thm:FreeCaratheodory} is notably lower; however, \cite[Theorem 1.8]{HL+} has the advantage that it may be applied to any matrix convex set which is closed and bounded.

\ssec{Computing representations of tuples as sums of extreme points} 

A property of free extreme points that is satisfying from a computational perspective is that the representations of an element of a free spectrahedron as a matrix convex combination of elements of the free spectrahedron can be computed (with probability 1) using semidefinite programming. Furthermore, these computed representations satisfy the free Caratheodory bound given in Theorem \ref{thm:FreeCaratheodory}. This we now describe. 

Computation of such a representation is accomplished by computing a sequence of maximal 1-dilations of a given element of a free spectrahedron. Given a free spectrahedron $\cD_A$ and a tuple $X \in \cD_A (n)$, say the dilation 
\[
Y=\begin{pmatrix}
X & c\hbeta \\
c\hbeta^T & \hgamma
\end{pmatrix} \in \cD_A (n+1)
\]
is a \df{maximal $1$-dilation} of $X$ if $\hbeta \in M_{n \times 1} (\R^g)$ is nonzero tuple satisfying $\ker L_A (X) \subset \ker \Lambda_A (\hat{\beta}^T)$ and the real number $c$ is a solution to the maximization problem
\[
\begin{array}{rllcl}  c:=&\underset{\alpha \in \R, \gamma \in \R^g}{\mathrm{Maximizer}} \ \ \ \ \alpha \\
\mathrm{s.t.} & \begin{pmatrix}
X & \alpha \hbeta \\
\alpha \hbeta^T & \gamma
\end{pmatrix} \in \cD_A(n+1) \\
\end{array}
\]
and $\hgamma$ is a classical extreme point of the classical spectrahedron 
\[
\frak{D}_{A,X,c\hbeta}:=\left\{ \gamma \in \R^g \ \bigg|\  L_A \begin{pmatrix}
X & c \hbeta \\
c \hbeta^T & \gamma
\end{pmatrix} \succeq 0 \right\}.
\]

An Arveson dilation of a given element of a bounded free spectrahedron may be computed in the following manner. 

\begin{algo}
\label{algo:ArvDiAlg}
Let $A \in \smdrg$ such that $\cD_A$ is a bounded real free spectrahedron. Given a tuple $X \in \cD_A (n)$, set $Y^0=X$. For integers $j=0,1,2\dots$ and while $Y^j$ is not an Arveson extreme point of $\cD_A$, define 
\[
Y^{j+1}:= \begin{pmatrix} 
Y^j & c_j \hbeta^j \\
c_j (\hbeta^j)^T & \hgamma^j 
\end{pmatrix}
\]
where $\hbeta^j$ is a nonzero solution to 
\[
\ker L_A (Y^j) \subset \ker \Lambda_A (\beta^T) \quad \quad \quad \beta \in M_{n \times 1} (\R^g)
\]
and where $c_j$ and $\hgamma^j$ are solutions to the sequence of maximization problems
\[
\begin{array}{rllcl}  c_j:=&\underset{c \in \R, \gamma \in \R^g}{\mathrm{Maximizer}} \ \ \ \ c \\
\mathrm{s.t.} &L_A \begin{pmatrix}

Y^j & c\hbeta^j \\
c(\hbeta^j)^T & \gamma
\end{pmatrix}\succeq 0 ,\\
\\
\mathrm{and } \quad \hgamma^j:=&\underset{\gamma \in \R^g}{\mathrm{Maximizer}} \ \ \ell(\gamma) \\
\mathrm{s.t.}  &L_A \begin{pmatrix}

Y^j & c_j \hbeta^j \\
c_j (\hbeta^j)^T & \gamma
\end{pmatrix}\succeq 0.
\end{array}
\]
Here $\ell$ is any linear functional which maps $\R^g$ to $\R$.
\end{algo}

\begin{prop}
\label{prop:ArvDiAlg}
Let $\cD_A$ be a bounded free spectrahedron $\cD_A$ and let $X \in \cD_A (n)$.
Then with probability $1$ Algorithm \ref{algo:ArvDiAlg} terminates after $k \leq \mu \leq ng$ steps and the result $Y^k  \in SM_{n+k} (\R)^g$ is an Arveson extreme point of $\cD_A$ which is a dilation of $X$.\footnote{ If the algorithm is designed so that each $\hgamma^j$ is a classical extreme point of the relevant spectrahedron, then this method always succeeds in at most $\mu$ steps.} Here $\mu$ is the dimension of the space of solutions to equation \eqref{eq:defmu}.

In addition, if is $U$ be a unitary such that $U^* Y^k U = \oplus_{i=1}^\ell Z^i$ where each $Z^i$ is irreducible, then each $Z^i$ is a free extreme point of $\cD_A$ and $X$ may be expressed as a matrix convex combination of the $Z^i$ and this matrix convex combination satisfies the free Caratheodory bound of Theorem \ref{thm:FreeCaratheodory}.

\end{prop}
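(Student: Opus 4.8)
The plan is to establish three things in turn: first, that every iteration of the loop in Algorithm~\ref{algo:ArvDiAlg} is well-defined with probability~$1$; second, that the loop halts after some $k\le\mu\le ng$ iterations at a tuple $Y^k\in SM_{n+k}(\R)^g$ which is an Arveson extreme point of $\cD_A$ and a dilation of $X$; and third, that decomposing $Y^k$ into irreducible summands produces the asserted matrix convex combination and that it meets the free Caratheodory bound of Theorem~\ref{thm:FreeCaratheodory}. For the first point, at iteration $j$ one needs a nonzero $\hbeta^j$ with $\ker L_A(Y^j)\subseteq\ker\Lambda_A((\hbeta^j)^T)$, and by Proposition~\ref{prop:ExtremeLinSys}\eqref{it:ArvLinSystem} together with \eqref{eq:ExtremeKerContain} such a tuple exists precisely because the loop guard certifies that $Y^j$ is not Arveson extreme. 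Boundedness of $\cD_A$ makes the feasible set of the $c_j$-maximization (a slice of $\cD_A(n+j+1)$) compact and nonempty, so $c_j$ is attained, and likewise $\hgamma^j$ is attained as a maximizer of the linear functional $\ell$ over the compact classical spectrahedron $\frak{D}_{A,Y^j,c_j\hbeta^j}$. When the $\ell$ used is chosen at random, then with probability~$1$ its maximizer over that spectrahedron is unique --- the set of directions with non-unique maximizer over a fixed compact convex set being Lebesgue null --- so $\hgamma^j$ is an exposed, hence classical extreme, point of $\frak{D}_{A,Y^j,c_j\hbeta^j}$; this makes $Y^{j+1}$ a maximal $1$-dilation of $Y^j$, and taking a union over the at most $\mu\le ng$ iterations preserves probability~$1$. (The deterministic variant of the footnote instead post-processes $\hgamma^j$ to a classical extreme point of that spectrahedron.) By its block form each $Y^{j+1}\in\cD_A(n+j+1)$ is a dilation of $Y^j$, hence inductively of $X=Y^0$.

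For the second point, let $\mu_j$ be the dimension of the solution space of $\ker L_A(Y^j)\subseteq\ker\Lambda_A(\beta^T)$ in the unknown $\beta\in M_{(n+j)\times1}(\R)^g$, so $\mu_0=\mu$ and $\mu_0\le\dim M_{n\times1}(\R)^g=ng$. Since $\hbeta^j$ satisfies $\ker L_A(Y^j)\subseteq\ker\Lambda_A((\hbeta^j)^T)$, the canonical-shuffle and Schur-complement description of $L_A(Y^{j+1})$ used in the proof of Proposition~\ref{prop:ExtremeLinSys} shows that $\ker L_A(Y^j)$ embeds into $\ker L_A(Y^{j+1})$. The crux --- and the step I expect to be the main obstacle, for which I would invoke the dilation count from the proof of Theorem~\ref{thm:FreeCaratheodory} in \cite{EH19} --- is that the maximality of $c_j$ together with the extremality of $\hgamma^j$ forces $\ker L_A(Y^{j+1})$ to contain enough genuinely new directions that $\mu_{j+1}\le\mu_j-1$. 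Hence $\mu_k=0$ for some $k\le\mu_0=\mu$, and by Proposition~\ref{prop:ExtremeLinSys}\eqref{it:ArvLinSystem} this is exactly the statement that $Y^k$ is an Arveson extreme point of $\cD_A$; moreover $Y^k\in SM_{n+k}(\R)^g$ since each iteration enlarges the size by one.

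For the third point, write $U^*Y^kU=\oplus_{i=1}^\ell Z^i$ with each $Z^i\in SM_{n_i}(\R)^g$ irreducible, so $\sum_{i=1}^\ell n_i=n+k$. Each $Z^i$ is itself Arveson extreme: a nontrivial dilation $\begin{pmatrix}Z^i&b\\b^T&c\end{pmatrix}\in\cD_A$ with $b\ne0$ would, since $\cD_A$ is closed under direct sums and under unitary conjugation, yield a nontrivial dilation of $Y^k$, contradicting the second point; so by Theorem~\ref{thm:ArvImpliesEuc}\eqref{it:FreeIsArvIrred} each $Z^i$ is a free extreme point of $\cD_A$. Finally let $P\colon\R^n\to\R^{n+k}$ be the inclusion of $\R^n$ onto the first $n$ coordinates of $\R^{n+k}$, so that $X=P^TY^kP$, and block $U^TP$ as $(V_1^T\ \cdots\ V_\ell^T)^T$ with $V_i\in M_{n_i\times n}(\R)$; then $\sum_{i=1}^\ell V_i^TV_i=P^TUU^TP=P^TP=I_n$ and $X=(U^TP)^T\bigl(\oplus_{i=1}^\ell Z^i\bigr)(U^TP)=\sum_{i=1}^\ell V_i^TZ^iV_i$, a matrix convex combination of the free extreme points $Z^i$. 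Since $\sum_i n_i=n+k$ with $k\le\mu\le ng$, the sum of sizes equals $n+k\le n(g+1)$, which is the free Caratheodory bound of Theorem~\ref{thm:FreeCaratheodory}.
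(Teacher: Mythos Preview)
Your proof is correct and follows essentially the same approach as the paper's: both defer the key termination step (that each maximal $1$-dilation strictly decreases the dilation-dimension count, giving $k\le\mu$) to \cite[Theorem 2.4]{EH19}, and both obtain the free-extreme property of the $Z^i$ from Theorem~\ref{thm:ArvImpliesEuc}. You supply more detail than the paper does --- the probability-$1$ argument for $\hgamma^j$ being classically extreme, the explicit verification that the $Z^i$ inherit Arveson extremality from $Y^k$, and the explicit construction of the contractions $V_i$ --- all of which the paper either asserts directly or dismisses as routine.
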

\begin{proof}
Let $Y^0, Y^1 , Y^2 \dots $ be as in Algorithm \ref{algo:ArvDiAlg}. Then for each $j$ the tuple $\hgamma^j$ is a classical extreme point of the classical spectrahedron $\frak{D}_{A,Y^j, c_j \hbeta^j}$  with probability $1$. Therefore for each $j$ the tuple $Y^{j+1}$ is a maximal $1$-dilation of $Y^j$ with probability $1$. 

Assuming each $Y^{j+1}$ is in fact a maximal $1$-dilation of $Y^j$, \cite[Theorem 2.4]{EH19} shows that there is a $k \leq \mu$ such that $Y^k$ is an Arveson extreme point of $\cD_A$ from which the result follows.  

The fact that the $Z^i$ in the second statement of the Proposition are all free extreme is immediate from Theorem \ref{thm:ArvImpliesEuc}. Rewritting the compression of $Y^k$ to $X$ as a matrix convex combination of the $Z^i$ is a routine argument for matrix convex combinations and is omitted. The sum of sizes of the $Z^i$ is equal to $n+k$, the size of $Y^k$, thus the sum of sizes of the $Z^i$ is less than or equal to $n(g+1)$.
\end{proof}

Since there is freedom at each step in the choices of $\beta^j$ and of linear functional used produce each $\hgamma^j$, repeated application of Algorithm \ref{algo:ArvDiAlg} does not produce a unique Arveson dilation for the starting tuple. However, this method will produce some Arveson dilation so long as each $\hgamma^j$ is a classical extreme point  $\frak{D}_{A,Y^j, c_j \hbeta^j}$.

\sssec{Comparison to \cite{EH19}}
\cite[Proposition 2.5]{EH19} offers a method to compute Arveson dilations of a given element of a bounded free spectrahedron. However, this method involves local optimization of the Euclidean norm of an element of a spectrahedron. In contrast, Algorithm \ref{algo:ArvDiAlg} only involves solving linear systems and semidefinite programming.

The definition of a maximal 1-dilation given here differs slightly from the original definition in \cite{EH19} in that $\gamma$ was originally required to be an elemental with maximal Euclidean norm in the classical spectrahedron $\frak{D}_{A,X,c\hbeta}$. However, the definition given here may be used in the main results of \cite{EH19} without modification of the resulting statements or proofs. The primary advantage of the present definition is that a classical extreme point of $\frak{D}_{A,X,c\hbeta}$ may be computed (with probability 1) by optimizing a linear functional over $\frak{D}_{A,X,c\hbeta}$.

\section{Experiment Methodology}

\label{sec:ExperimentMethodology}

The topic of this paper is optimizing a linear functional $\ell $ over a free spectrahedron and the expected nature of its optimizer $\hX^\ell$.
More precisely,
given a bounded free spectrahedron $\cD_A$, a positive integer $n$, and a
linear functional $\ell$ on $\smnrg$, we study properties of
	\beq
	\label{eq:CandidateSDP}
	\begin{array}{rllcl}  \hX^\ell :
	=&\underset{X \in \cD_A(n)}{\mathrm{Minimizer}}
	 \ \ \ \ \ell (X).
	\end{array}
	\eeq
 If $\ell$ is chosen at random from a uniform distribution, then with probability $1$ the minimizer $\hX^\ell$ will be a Euclidean extreme point of $\cD_A$. However, we shall see experimentally that $\hX^\ell$ is also nearly certain to be free extreme. In addition to checking if a minimizer is free or Euclidean extreme, we also examined the dimensions of $\ker L_A (\hat{X}^\ell)$ which occur, see Sections \ref{sec:kerSize} and \ref{sec:KerDimDistri}, as well as  the probability with which a minimizer is irreducible, see Section \ref{sec:irredExp}.

	\ssec{Generating $A$ and $\ell$}
	\label{sec:GenerationOfAell}
	 We now describe how we randomly generate free spectrahedra $\cD_A$ and linear functionals $\ell$. We first discuss the generation of linear functionals. 
	
	\sssec{Random Linear Functionals}
	In the semidefinite program \ref{eq:CandidateSDP}, the linear functional $\ell: \smnrg \to \R$ is randomly generated in one of two methods. One method is to set 
	\beq
	\label{eqn:RationalFunctional}
	\ell(X):=\sum_{k=1}^g \sum_{i \geq j}^n \alpha_{i,j}^k X_{i,j}^k
	\eeq
	where $X_{i,j}^k$ is the $i,j$ entry of $X^k$ and $\alpha_{i,j}^k$ is a random coefficient.  Linear functionals of this form are called \df{random coefficient (\RC) linear functionals}.
	The second method is to define 
	\beq
	\label{eqn:TraceFunctional}
	\ell(X):=\mathrm{tr} \left(V^T V \Lambda_A (X) \right)
	\eeq
	where $V$ is a randomly generated upper triangular $dn \times dn$ matrix. A linear functional of this form is called a \df{random positive weight trace (RPT) linear functional}. 
	
	In these two methods, the coefficients $a_{i,j}^k$ and the nonzero entries of $V$ were randomly generated integers between which where uniformly drawn from an interval $[-b,b]$ where the bound $b$ ranged from $20$ to $200,000$ over the course of the experiments. As a normalization factor, the coefficients $a^k_{i,j}$ and the entries of the matrix $V$ were divided by a constant $d$ which ranged from $10$ to $100,000$, depending on the choice of $b$, so that the final $a^k_{i,j}$ and entries of $V$ were in the range of -2 to 2.
	
	\sssec{Other distributions tested for generating linear functionals}
	In addition to the distribution explained above, in a small number of experiments with \RC \ linear functionals we used two additional distributions for the $\alpha_{i,j}^k$. 	\ben
	\item
	We allowed the $\alpha_{i,j}^k$ to be real mean 0 standard deviation 1 random Gaussian distributed numbers.
	\item
    Additionally, we allowed the $\alpha_{i,j}^k$ to be uniformly distributed real numbers between $-2$ and $2$. 
	
	\een
	
The results obtained using these methods differ little from what is described in this article, so we did not continue experiments with these alternative methods and we omit the details of these results.

 \sssec{Numerical verification of irreducibility}
 
 \label{sec:NumVerIrred}
	 
	 To determine whether a tuple of symmetric matrices $B = (B_1, \dots, B_g) \in \smxrg[m]$ is irreducible, we compute the dimension of its symmetric commutant. Here, the \df{symmetric commutant} of a matrix tuple is the the space of all symmetric matrices which commute with $B_1,  \dots, B_{g-1}$ and $B_g$. The dimension of the symmetric commutant of $B$ is determined by finding the singular values of the linear map $\phi_B:\smxr[m] \to \smxrg[m]$ defined by
	\[
	\phi_B (Z):= ZB-BZ
	\]
	for any $Z \in \smxr[m]$. Section \ref{sec:ZeroDetermine} discusses the methodology used to decide which singular values (or eigenvalues) of a numerical linear map are treated as zero.  
	
Note that irreducibility over $\R$ is not equivalent to other common definitions of irreducibility which are instead equivalent to irreducibility over $\C$. Namely, a matrix tuple $X \in \smnrg$ which is irreducible over $\R$ may fail to generate the algebra $M_n (\R)$. Additionally it is possible for a matrix which is not a multiple of the identity to commute with a tuple of symmetric matrices which is irreducible over $\R$. However, as shown in \cite{EH19}, a tuple $X \in \smnrg$ is irreducible over $\R$ if and only if the only symmetric matrices which commute with $X$ are multiples of the identity.

	\subsubsection{ Generation of free spectrahedra} 
	
	Say a free spectrahedron $\cD_A$ is \df{irreducible} if there is an irreducible tuple $B$ such that $\cD_A=\cD_B$. In our experiments we primarily focus on bounded irreducible free spectrahedra.
		
 Irreducible matrix tuples that define bounded irreducible free spectrahedra are generated in the following manner. We generate first a matrix $\tilde{A} \in M_d(\bbR)^g$, where each entry is a random integer (uniformly distributed) between $-25$ and $25$. Then, we set 
	$$A=\frac{1}{10}(\tilde{A}+\tilde{A}^T).$$ 
We then verify that $A$ is irreducible by determining the dimension of the symmetric commutant of $A$. 

To verify that $\cD_A$ is bounded it is sufficient to show that $\cD_A (1)$ is bounded \cite{HKM13}. One may verify that level one is bounded by checking if there is a cube which contains $\cD_A (1)$.

	\ssec{Properties of optimizing $\hat{X}$.}
	After generating a candidate $\hat{X}$ which is an minimizer of equation \eqref{eq:CandidateSDP}, the tuple is tested for irreducibility. If the tuple $\hat{X}$ is reducible, then it is discarded from further analysis. This is because if $\hat{X} \in \cD_A(n)$ is irreducible, then $\hat{X}$ is a direct sum of two smaller tuples, say of size $n_1$ and $n_2$. Therefore, the properties of $\hat{X}$ may be inferred from properties of tuples of size $n_1$ and $n_2$ and are not necessarily inherent to tuples of size $n$.
	
	If $\hat{X}$ is irreducible, then we test if $\hat{X}$ is a free or Euclidean extreme point of $\cD_A$. 
	We check if $\hat{X}$ is free extreme by calculating the dimension of the kernel of the linear map 
	\beq
	\label{eq:ArvEucMap}
	\psi_{A,\hat{X}}(\cdot):=\Lambda_A (\cdot) P_{A,\hat{X}}:M_{1 \times n} (\R)^g \to M_{d \times k_{A,\hat{X}}}(\R).
	\eeq
	In this definition, $P_{A,\hat{X}}$ is a $dn \times k_{A,\hat{X}}$ matrix whose columns form an orthogonal basis for the kernel of $L_A (\hat{X})$ and $k_{A,\hat{X}}$ is the dimension of the kernel of $L_A (\hat{X})$. As previously discussed in Section \ref{sec:ExtLinSys}, an irreducible $\hat{X}$ is free extreme if and only if $\dim \ker \psi_{A,\hat{X}}=0$.
	
	If $\hat{X}$ is free extreme, then $\hat{X}$ is Euclidean, 
	see \cite[Theorem 1.1]{EHKM18}. 
	If $\hat{X}$ is not free extreme, then we determine if $\hat{X}$ is Euclidean extreme by finding the dimension of the kernel of
	 $\psi_{A,\hat{X}}$ as a map from 
	 $\smnrg \to M_{dn \times k_{A,\hat{X}}}(\R)$. In either setting, the dimension of the kernel of $\psi_{A,\hat{X}}$ is estimated by computing the singular values of $\psi_{A,\hat{X}}$.
	 
 In addition, to determining if $\hat{X}$ is Arveson or Euclidean extreme. The dimension of the kernel of $L_A(\hat{X})$ is recorded.
	
	Based on (soon to be presented) experimental evidence, 
	irreducible tuples which are Euclidean extreme 
	but are not free extreme, i.e. non-Arveson extreme points, rarely occur as optimizers.  As experiments run, free spectrahedron and linear functional pairs which generate non-Arveson extreme points are automatically stored in separate files so these tuples can be examined in greater detail.

	\subsection{The ``what is zero" decisions} 
	\label{sec:ZeroDetermine}
	
	Let $\sigma \in \R^n$ be a list of numerically computed eigenvalues or singular values of a nonzero linear map where the entries of $\sigma$ are ordered so $|\sigma_j| \geq |\sigma_{j+1}|$ for all $j=1, \dots, n-1$. We determine which entries of $\sigma$ to treat as zero by setting tolerances $\epsilon_1$ and $\epsilon_2$ and searching for the smallest index $1<j\leq n$ so that 
	\[
	|\sigma_j|<\epsilon_1 \quad \quad \mathrm{and} \quad \quad \frac{|\sigma_j|}{|\sigma_{j-1}|}< \epsilon_2.
	\]
	In words, we search for the smallest index $j$ such that $\sigma_j$ is sufficiently small and such that the relative gap between $\sigma_j$ and $\sigma_{j-1}$ is sufficiently large. If such an index $j_0$ is found, then we consider $\sigma_j$ to be equal to zero for all indices $j$ such that $j_0 \leq j \leq n$. 
	
	If no such $j_0$ is found, but there is an index $j'_0$ such that
	\[
	|\sigma_j|<\epsilon_1 *10^\alpha \quad \quad \mathrm{and} \quad \quad \frac{|\sigma_j|}{|\sigma_{j-1}|}< \epsilon_2  *10^\alpha.
	\]
	then we report that the numeric of the problem are ill conditioned and that the presence or absence of zero eigenvalues or singular values cannot reliably be determined. Here $\alpha = 2$ when considering eigenvalues, and $\alpha=1$ when considering singular values. If no such $j_0$ or $j'_0$ is found, then the we determine that all eigenvalues or singular values are nonzero.
	
	When determining the dimension of the kernel of $L_A (\hat{X})$ we set $\epsilon_1=10^{-6}$ and $\epsilon_2=10^{-5}$. Meanwhile, when  classifying $\hat{X}$ as free or Euclidean extreme we set $\epsilon_1=10^{-3}$ and $\epsilon_2=10^{-2.5}$. Finally, when determining the dimension commutant of $\hat{X}$ we set $\epsilon_1=10^{-4.5}$ and $\epsilon_2=10^{-4}$. We again remark that the computations for classifying $\hat{X}$ as Arveson or Euclidean extreme and determining the dimension of the commutant of $\hat{X}$ all use singular values, while finding the dimension of the kernel of $L_A (\hat{X})$ uses eigenvalues. We performed a series of runs with different tolerances to confirm that the choices stated above were effective while other choice lead to various difficulties. 
	
	Table \ref{table:TolerancesTable} contains the values of $\epsilon_1$ and $\epsilon_2$ used in each test, as well as whether eigenvalues or singular values were used as a diagnostic. When an experiment did not meet all numerical tolerances, it was discarded. The discard rate with two exceptions was roughly 4\%, for a few details see Section \ref{sec:nonArvRare}.
	
	\begin{table}[h]
	\caption{Table of tolerances}
		\begin{tabular}{|l|l|l|l|l|}
			\hline
			Computation & $\epsilon_1$ & $\epsilon_2$ & Diagnostic\\ \hline
			Kernel dimension & $10^{-6}$ & $10^{-5}$ & Eigenvalues  \\ \hline
			Free extreme & $10^{-3}$ & $10^{-2.5}$ & Singular values  \\ \hline
			Euclidean extreme & $10^{-3}$ & $10^{-2.5}$ & Singular values  \\ \hline
			Irreducibility & $10^{-4.5}$ & $10^{-4}$ & Singular values   \\ \hline
		\end{tabular}
	
	\label{table:TolerancesTable}
	\end{table}

\subsubsection{Numerical Issues}	
\label{sec:NumErr}

We individually examined several of the minimizers $\hX^\ell$ that our experiments rejected because an error tolerance was not met. In all cases we checked closely, it was the kernel dimension tolerance which was violated. There was not a sharp enough drop in the plot of eigenvalues to be sure where the null space started. Moreover, we found that the putative null space was relatively large in comparison to other null spaces observed at the same level of that spectrahedron.

 Based on the idea that a tuple $X$ is more likely to be free extreme point the larger $\ker L_A (X)$ is, we believe that this exclusion has little effect on our results presented in Section \ref{sec:nonArvRare} and Section \ref{sec:kerSize} where we show that we typically observe free extreme points and that we typically observe large kernels. However,  exclusion of kernels of extreme size may have some impact on results in Section \ref{sec:KerDimDistri} where the distribution of observed kernel sizes is discussed. Additionally, we often see that tuples with very large kernels are more likely to be reducible. As a consequence, this exclusion may impact results in Section \ref{sec:irredExp}, where we discus the proportion of minimizers which are irreducible. 

A second possible source of numerical error is that the determination of the dimension of the commutant of a tuple can be sensitive to the thresholds set for zero decisions. In rare cases a (nearly) reducible tuple may have been determined to be irreducible. For example, whether or not one calls the tuple
\[
\left(
\begin{pmatrix}
1 & 0 \\
0 & -1
\end{pmatrix},
\begin{pmatrix}
1 & \epsilon \\
\epsilon & -1
\end{pmatrix}\right)
\]
reducible depends on the threshold at which one treats $\epsilon$ as zero. 
	
	\subsection{Our Experiments}
	
	We ran large numbers of experiments of two general types:

	\begin{enumerate}
    	\item ($A$, $\ell$ pairs) \index{$A$, $\ell$ pairs}: Fix $g,d,n \in \mathbb{N}$. Pairs consisting of a bounded free spectrahedron $\cD_A$ and a (\RC \  or \RPT) linear functional $\ell$ were randomly generated. For each $(\cD_A,\ell)$ pair, the minimizer $\hat{X}^\ell \in \smnrg$ was computed.
    	
	    \item (Fixed random $A$, many $\ell$) 
	   \index{Fixed random $A$, many $\ell$}: A bounded free spectrahedron $\cD_A$ was generated. Then for several choices of $n$ (typically $1 \leq n \leq 8$) large numbers of \RC \  and \RPT \  linear functionals were generated and minimized over $\cD_A (n)$ to generate extreme points of $\cD_A$ at level $n$.
	\end{enumerate}
	
	Typically $g=2, \dots, 6$,\ and $ g \leq d \leq 7$. Based on a small number of examples, the restriction $g \leq d$ appears to be important for our observations and we expect that different behaviour may occur when $g>d$. Typically $n$ is less than or equal to $8$ but we occasionally allow $n$ up to $14$.
  
  The experiments either
  \ben
  \item
  fix $g, d, n \in \mathbb{N}$ and randomly generate thousands of
  different $(\cD_A,\ell)$ pairs where $A$ is irreducible and $\cD_A$ is bounded, OR
  \item
  for each of 60 selected (randomly generated) $\cD_A$, fix $\cD_A$ and generate thousands of random linear functionals $\ell$ for each level $n=1, \dots, 8$. \\
  \een

In either case, for each linear functional $\ell$ and free spectrahedron $\cD_A$ pair, the minimizer  $X^\ell$ is computed.
 The total number of cases is approximately 7.3 million.

	In describing our experimental findings, we frequently use the term \df{it is \ncr }. For example, as we soon see it is nearly certain that the minimizer $\hX^\ell$ of a \RC \  or \RPT \ 
linear functional $\ell$ over a random bounded irreducible free spectrahedron $\cD_A$ defined by $A \in \smdrg$ with $g < d$ is \fep \ provided it is irreducible. We use this phrasing rather than the more common ``with high probability", since this usually occurs in papers where these are proven estimates on how high this probability is. We do not have such estimates, so we avoid confusion by using a different terminology. Also in our findings we see that exceptions to the pattern we find are very rare, so strong language is warranted.

	\section{Non-Arveson Extreme Points are Rare}
	\label{sec:nonArvRare}
		This section gives a list of findings related to irreducible spectrahedra, namely spectrahedra $\cD_A$ where $A$ is an irreducible tuple.
	 The irreducible minimizers our experiments find of course are Euclidean extreme points, but, very surprisingly, are nearly certain to be Arveson extreme points. In this section we give more detail.

	\begin{obs}
	\label{obs:irredNonArvLevelUpperBound}
	Fix $d > g > 2$ or $g=d=4 \mathrm{ \ or \ } 5$. Then for a randomly generated $A \in \smdrg$ such that $\cD_A$ is a bounded irreducible free spectrahedron, it is nearly certain that the minimizer $\hX^\ell$ of a RC or RPT linear functional $\ell$ over $\cD_A (n)$ is a free extreme point of $\cD_A$ if $\hX^\ell$ is irreducible. 
	
	Furthermore, for these values of $g,d$, we observe that there is an integer $N_{g,d}$ depending only on $g$ and $d$ such that $\hX^\ell$ is always a free extreme point of $\cD_A (n)$ provided that $\hX^\ell$ is irreducible and $n \geq N_{g,d}$. If $g \geq 5$ then we observe that $N_{g,d}=1$.

	\end{obs}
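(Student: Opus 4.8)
Since this is an empirical observation, what I would actually carry out is a systematic experimental run, supported by a heuristic that accounts for the numbers and pinpoints what a genuine proof would have to supply. For the experimental part, following Section~\ref{sec:ExperimentMethodology} I would, for each relevant pair $(g,d)$, generate many bounded irreducible free spectrahedra $\cD_A$ (symmetrize a random integer matrix, check boundedness of $\cD_A(1)$ and triviality of the symmetric commutant of $A$), and for $n=1,2,\dots$ generate many \RC\ and \RPT\ functionals $\ell$ and solve \eqref{eq:CandidateSDP} to obtain $\hX^\ell$. After discarding reducible optimizers and those failing the tolerances of Section~\ref{sec:ZeroDetermine}, each surviving irreducible $\hX^\ell$ is tested for Arveson extremality via Proposition~\ref{prop:ExtremeLinSys}\eqref{it:ArvLinSystem}: compute $\dim\ker\psi_{A,\hX^\ell}$ for the map $\psi_{A,\hX^\ell}(\cdot)=\Lambda_A(\cdot)P_{A,\hX^\ell}$ and declare $\hX^\ell$ free extreme exactly when that dimension is $0$ (Theorem~\ref{thm:ArvImpliesEuc}\eqref{it:FreeIsArvIrred}). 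Recording, per $(g,d,n)$, the fraction of irreducible optimizers that are free extreme gives the ``nearly certain'' conclusion; the least $n$ past which this fraction equals $1$ over the whole sample is the empirical $N_{g,d}$, and one checks $N_{g,d}=1$ whenever $g\ge5$.

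The heuristic is where the mathematical content sits. By Proposition~\ref{prop:ExtremeLinSys}, $\hX^\ell$ fails to be Arveson extreme exactly when the homogeneous system \eqref{eq:ArvLinSystem} --- with $d\,k_{A,\hX^\ell}$ equations in the $gn$ unknowns $\beta\in M_{n\times1}(\R)^g$ --- has a nonzero solution, and generically this is ruled out once $d\,k_{A,\hX^\ell}\ge gn$. Since $\ell$ is random, $\hX^\ell$ is with probability one a Euclidean extreme point, so Theorem~\ref{thm:BeanCountEuc}\eqref{it:BeanCountEucIrred} already supplies $d\,k_{A,\hX^\ell}\ge\tfrac{g(n+1)}{2}$ --- only about half of what is needed, and closing this factor of two is the crux. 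The extra input is that $\hX^\ell$ is not merely a Euclidean extreme point but the minimizer of a \emph{generic} linear functional, hence a $0$-dimensional face of $\cD_A(n)$; I would try to push the face-dimension bookkeeping for $\cD_A(n)$, together with strict complementarity of the optimum, to the inequality $d\,k_{A,\hX^\ell}\ge gn$, which is precisely the ``about twice'' kernel-size phenomenon quantified experimentally in Section~\ref{sec:kerSize}. For the final clause: at level $n=1$ the symmetric unknown $\beta$ of \eqref{eq:EucLinSystem} and the vector unknown $\beta$ of \eqref{eq:ArvLinSystem} are both $g$-tuples of scalars, so the two systems coincide --- every Euclidean extreme point of $\cD_A(1)$ is automatically Arveson extreme and, being trivially irreducible, free extreme. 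Hence $N_{g,d}=1$ already ``works'' at level one for every $(g,d)$; the content of $N_{g,d}=1$ for $g\ge5$ is that the slack $d\,k_{A,\hX^\ell}-gn$ stays positive at all higher levels as well, which should follow from a monotone-in-$n$ growth of that slack once $g$ is large enough relative to $d$, while for the remaining $(g,d)$ the same growth together with the concentration of $k_{A,\hX^\ell}$ (seen in Section~\ref{sec:KerDimDistri} to be roughly Gaussian) produces a finite threshold.

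The main obstacle is exactly that factor-of-two gap: the bare equation count of Remark~\ref{rem:kerSize} delivers only the Euclidean bound, and the slack beyond it is where the ``systematic degeneracy'' noted in Section~\ref{sec:kerSize} lives. Making this rigorous --- showing a random SDP optimum over a free spectrahedron has $d\,k_{A,\hX^\ell}$ comfortably above $gn$ --- needs genuinely new control of the generic facial structure (and strict complementarity) of the LMI sections $\cD_A(n)$, and the recorded exceptions to naive counting ($82$ of the $806$ non-Arveson optimizers have $d\,k_{A,\hX^\ell}\ge gn$ yet are not Arveson extreme) show the degeneracy is real. Upgrading ``nearly certainly'' to the deterministic ``always, for $n\ge N_{g,d}$'' is harder still, as it would require a worst-case lower bound $d\,k_{A,X}\ge gn$ valid for \emph{every} Euclidean extreme optimizer at high levels; I do not see how to extract that from the present tools, and I expect it to be the genuinely hard --- possibly open --- point.
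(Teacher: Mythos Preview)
Your proposal is essentially aligned with what the paper does: since this is an empirical \emph{Observation}, the paper offers no proof at all, only experimental evidence. The entirety of the paper's support for Observation~\ref{obs:irredNonArvLevelUpperBound} consists of Tables~\ref{table:YiNFEPRC}--\ref{table:JohnNFEPRPT}, which tabulate the irreducible non-Arveson optimizers found among roughly $6.8$ million runs for $d>g>2$ or $g=d=4,5$, together with the summary counts in Section~\ref{sec:FEcount}. Your description of the experimental pipeline matches the paper's methodology exactly.

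Where you go beyond the paper is in the heuristic paragraph. The paper does not attempt, in its evidence for this observation, to \emph{explain} why irreducible optimizers are nearly always free extreme; it simply reports the counts. The connection you draw to the kernel-dimension inequality $d\,k_{A,\hX^\ell}\ge gn$ is certainly the right lens (and the paper makes this connection in Section~\ref{sec:kerSize}, not here), but your invocation of strict complementarity and ``face-dimension bookkeeping'' is speculative and not something the paper pursues. Your observation that at $n=1$ the Arveson and Euclidean systems \eqref{eq:ArvLinSystem} and \eqref{eq:EucLinSystem} coincide, so every Euclidean extreme point of $\cD_A(1)$ is automatically Arveson extreme, is correct and is a clean remark the paper does not make explicit. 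You are also right that turning any of this into a proof is genuinely open; the paper nowhere claims otherwise, and your identification of the factor-of-two gap and the $82/806$ counterexamples to naive genericity as the real obstructions is accurate.
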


	\ssec{Tables counting non-Arveson extreme points} 
	\label{sec:FEcount}
	The evidence for Observation \ref{obs:irredNonArvLevelUpperBound} is in the tables which follow. The tables below give a complete list of the irreducible non-Arveson extreme points found in our experiments when $d>g>2$ and when $g=d=4$ and (implicitly) when $g=d=5$. 
	
	For $d>g>2$ or $g=d= 4 \mathrm{\ or \ } 5$, we did a total of 3,912,000 runs using RC linear functionals. In these runs, about 3,405,000 optimizers were irreducible extreme points. Out of these only 315 optimizers were irreducible non-Arveson extreme points, which is approximately  0.00925\%. We also did 2,926,000 
runs using RPT linear functional. In these runs, about 2,541,000 optimizers were irreducible extreme points out of which only 73 optimizers were irreducible non-Arv extreme points, which is approximately 0.00287\%.

	We discarded 4.09 \% of all our runs because one of our error tolerances was not met. This includes two bad outliers. For $g=d=3$ using RC linear functionals, the discard rate is about 10 \%. For $g=d=4$ using RC linear functionals and RPT linear functionals, the discard rates are 19 \% and 11 \% respectively. In all other cases, the discard rates were no greater than 7.6 \%.

	\begin{table}[H]
		\caption{
		$A$, $\ell$ pairs (RC). Total experiments: 1020000.  \newline
		 Irred non-Arveson/Total irred extreme : $ 51/ 916447 \approx 0.00556 \%$ \newline
		 All extreme: 966576. Num. reducibles: 50129. Num. errors: 53424.
		  \newline
		 The above includes 100000 points at level 1, 99390 of which are extreme.\protect\textsuperscript{\ref{note:level1nonextreme}}
		} 
	\begin{adjustbox}{width=\textwidth}
		\begin{tabular}{|c|c|c|c|c|c||c|c|c|}
			\hline
			\multirow{2}{*}{$g$} & \multicolumn{5}{|c||}{All irred non-Arveson extreme points} & \multicolumn{3}{c|}{All runs} \\ \cline{2-9}
			& $d$ & $n$ & $\dim \ker$ ($k$) & $g n-d k$ & \# non-free & \# runs per d,n & range of $d$ & range of $n$\\ \hline
			\multirow{4}{*}{3} & 4 & 3 & 2 & 1 & 27 & \multirow{4}{*}{10000} & \multirow{4}{*}{4-7} & \multirow{4}{*}{\begin{tabular}[c]{@{}c@{}} 1-8 for d=4 \\ 1-13 for d=5-7 \end{tabular}} \\ \cline{2-6}
			& 4 & 6 & 4 & 2 & 1 &  &  &  \\ \cline{2-6}
			& 4 & 7 & 5 & 1 & 2 &  &  &  \\ \cline{2-6}
			& 5 & 2 & 1 & 1 & 15 & &  & \\ \hline
			\multirow{6}{*}{4} & 4 & 2 & 2 & 0 & 1 & \multirow{6}{*}{10000} & \multirow{6}{*}{4-6} & \multirow{6}{*}{1-8} \\ \cline{2-6}
			& 4 & 6 & 6 & 0 & 1 & &  & \\ \cline{2-6}
			& 4 & 7 & 7 & 0 & 1 & &  & \\ \cline{2-6}
			& 4 & 8 & 8 & 0 & 1 & &  & \\ \cline{2-6}
			& 5 & 3 & 2 & 2 & 1 & &  & \\ \cline{2-6}
			& 6 & 2 & 2 & -4 & 1 & &  & \\ \hline
			5 & - & - & - & - & 0 & 10000 & 5-7 & 1-8 \\ \hline
			6 & - & - & - & - & 0 & 10000 & 7 & 2-8 \\
			\hline
		\end{tabular}
	\end{adjustbox}
		\label{table:YiNFEPRC}
	\end{table} 
	
	\stepcounter{footnote}
	\footnotetext{\label{note:level1nonextreme} The points at level $1$ which were not extreme points were numerically ill-conditioned.}

	\begin{table}[H]
		\caption{Fixed $A$, many $\ell$ (RC). Total experiments: 2892000.  \newline
		 Irred Non-Arveson/Total irred extreme: $  264 /  2488251 \approx 0.01061\%$ \newline
		 All extreme:  2728443. Num. reducibles: 240192. Num. errors:  163557.
		  \newline
		 The above includes 249000 points at level 1, 244110 of which are extreme.\protect\textsuperscript{\ref{note:level1nonextreme}}
	}
\begin{adjustbox}{width=\textwidth}
		\begin{tabular}{|c|c|c|c|c|c||c|c|c|}
			\hline
			\multirow{2}{*}{$g$} & \multicolumn{5}{|c||}{All irred non-Arveson extreme points} & \multicolumn{3}{c|}{All runs} \\ \cline{2-9}
			& $d$ & $n$ & $\dim \ker$ ($k$) & $g n-d k$ & \# non-free & \# runs per d,n & range of $d$ & range of $n$\\ \hline
			\multirow{3}{*}{3} & 4 & 3 & 2 & 1 & 94 & \multirow{3}{*}{\begin{tabular}[c]{@{}c@{}c@{}} 77000 for d=4-6, n=2-4 \\ 27000 for d=4-6, n=1,5-8 \\ 15000 for d=7 \end{tabular}} & \multirow{3}{*}{4-7} & \multirow{3}{*}{1-8} \\ \cline{2-6}
			& 5 & 2 & 1 & 1 & 160 & & & \\ \cline{2-6}
			& 5 & 4 & 2 & 2 & 1 & & & \\ \hline
			\multirow{5}{*}{4} & 4 & 2 & 2 & 0 & 1 & \multirow{5}{*}{\begin{tabular}[c]{@{}c@{}} 67000 for n=2-4 \\ 17000 for n=1,5-8 \end{tabular}} & \multirow{5}{*}{4-6} & \multirow{5}{*}{1-8} \\ \cline{2-6}
			& 4 & 3 & 3 & 0 & 1 & & & \\ \cline{2-6}
			& 4 & 4 & 4 & 0 & 2 & & & \\ \cline{2-6}
			& 5 & 3 & 2 & 2 & 2 & & & \\ \cline{2-6}
			& 5 & 4 & 3 & 1 & 3 & & & \\ \hline
			5 & - & - & - & - & 0 & 34000 & 5-7 & 1-8 \\ \hline
		\end{tabular}
	\end{adjustbox}
		\label{table:JohnNFEPRC}
	\end{table}

		\begin{table}[H]
		\caption{
			$A$, $\ell$ pairs (RPT). Total experiments: 1030000.  \newline
			Irred non-Arveson/Total irred extreme : $ 17/ 876052 \approx 0.00194 \%$  \newline
			All extreme: 998239. Num. reducibles: 122187. Num. errors: 31761.
			\newline
			The above includes 110000 points at level 1, 109973 of which are extreme.\protect\textsuperscript{\ref{note:level1nonextreme}}
		} 
	\begin{adjustbox}{width=\textwidth}
		\begin{tabular}{|c|c|c|c|c|c||c|c|c|}
			\hline
			\multirow{2}{*}{$g$} & \multicolumn{5}{|c||}{All irred non-Arveson extreme points} & \multicolumn{3}{c|}{All runs} \\ \cline{2-9}
			& $d$ & $n$ & $\dim \ker$ ($k$) & $g n-d k$ & \# non-free & \# runs per d,n & range of $d$ & range of $n$\\ \hline
			\multirow{2}{*}{3} & 
			\multirow{2}{*}{5} & 
			\multirow{2}{*}{2} & 
			\multirow{2}{*}{1} & 
			\multirow{2}{*}{1} & 
			\multirow{2}{*}{13} & 
			\multirow{2}{*}{10000} & 
			\multirow{2}{*}{4-7} & \multirow{2}{*}{\begin{tabular}[c]{@{}c@{}} 1-8 for d=4 \\ 1-13 for d=5-7 \end{tabular}} \\ 
			&  &  &  &  &  &  &  & \\ \hline
			\multirow{3}{*}{4} & 4 & 2 & 2 & 0 & 1 & \multirow{3}{*}{10000} & 
			\multirow{3}{*}{4-6} & 
			\multirow{3}{*}{1-8} \\ \cline{2-6}
			& 4 & 5 & 5 & 0 & 2 & &  & \\ \cline{2-6}
			& 5 & 4 & 3 & 1 & 1 & &  & \\ \hline
			5 & - & - & - & - & 0 & 10000 & 5-7 & 1-8 \\ \hline
			6 & - & - & - & - & 0 & 10000 & 7 & 1-8 \\
			\hline
		\end{tabular}
	\end{adjustbox}
		\label{table:YiNFEPRPT}
	\end{table} 
	
	\begin{table}[H]
		
		\caption{ Fixed $A$, many $\ell$ (RPT). Total experiments: 1896000.  \newline
		 Irred Non-Arveson/Total irred extreme: $56 /  1665174 \approx  0.00336\%$ \newline
		 All extreme:  1862608. Num. reducibles: 197434. Num. errors:  33392. 
		  \newline
		 The above includes 237000 points at level 1, 236939 of which are extreme.\protect\textsuperscript{\ref{note:level1nonextreme}}
		 } 
	 \begin{adjustbox}{width=\textwidth}
		\begin{tabular}{|c|c|c|c|c|c||c|c|c|}
			\hline
			\multirow{2}{*}{$g$} & \multicolumn{5}{|c||}{All irred non-Arveson extreme points} & \multicolumn{3}{c|}{All runs} \\ \cline{2-9}
			& $d$ & $n$ & $\dim \ker$ ($k$) & $g n-d k$ & \# non-free & \# runs per d,n & range of $d$ & range of $n$\\ \hline
			\multirow{3}{*}{3} & 4 & 3 & 2 & 1 & 26 & \multirow{3}{*}{\begin{tabular}[c]{@{}c@{}} 25000 for d=4-6 \\ 15000 for d=7 \end{tabular}} & \multirow{3}{*}{4-7} & \multirow{3}{*}{1-8} \\ \cline{2-6}
			& 4 & 4 & 3 & 0 & 1 & & & \\ \cline{2-6}
			& 5 & 2 & 1 & 1 & 22 & & & \\ \hline
			\multirow{4}{*}{4} & 4 & 2 & 2 & 0 & 1 & \multirow{4}{*}{15000} & \multirow{4}{*}{4-6} & \multirow{4}{*}{1-8} \\ \cline{2-6}
			& 4 & 3 & 3 & 0 & 1 & & & \\ \cline{2-6}
			& 4 & 4 & 4 & 0 & 2 & & & \\ \cline{2-6}
			& 5 & 4 & 3 & 1 & 3 & & & \\ \hline
			5 & - & - & - & - & 0 & 34000 & 5-7 & 1-8 \\ \hline
		\end{tabular}
	\end{adjustbox}
		\label{table:JohnNFEPRPT}
	\end{table}

 Note that in Table \ref{table:JohnNFEPRC}, for $g = 3$ and $d = 4 -6$  there are runs that were performed on a single spectrahedron only at levels $n =  2-4$. This is why there is a significant difference between the number of runs at levels $2-4$ and the number of runs at other levels for these choices of $g$ and $d$.

\subsection{g=2}

	Note regarding Table \ref{table:YiNFEPRC}: The non-Arveson points for $g=2$ are listed in a separate table, i.e. Table \ref{table:YiNFEPRCg2}. For $g=2$ and $d=3$ we continue to see irreducible non-Arveson extreme points $\hX^\ell$  even for large $n$; however, these non-Arveson extreme points become increasingly rare as $n$ increases.
	
	 \begin{table}[H]
		\caption{$A$, $\ell$ pairs (RC) $g=2$. Total experiments: 401000.  \newline
		 Irred Non-Arveson/Total irred extreme: $ 263/ 254523 \approx 0.103 \%$ \newline
		 All extreme: 380259. Num. reducibles: 125736. Num. errors: 20741. 
		  \newline
		 The above includes 11000 points at level 1, 10980 of which are extreme.\protect\textsuperscript{\ref{note:level1nonextreme}}
		}
	\begin{adjustbox}{width=\textwidth}
		\begin{tabular}{|c|c|c|c|c|c||c|c|c|}
			\hline
			\multirow{2}{*}{$g$} & \multicolumn{5}{|c||}{All irred non-Arveson extreme points} & \multicolumn{3}{c|}{All runs} \\ \cline{2-9}
			& $d$ & $n$ & $\dim \ker$ ($k$) & $g n-d k$ & \# non-free & \# runs per d,n & range of $d$ & range of $n$\\ \hline
			\multirow{19}{*}{2} & 3 & 2 & 1 & 1 & 181 & \multirow{19}{*}{\begin{tabular}[c]{@{}r@{}r@{}} 1000 for g=2,d=3,n=1 \\ 0 for g=2,d=4,n=1 \\ 10000 for the rest of d,n \end{tabular}}  & 
			\multirow{19}{*}{3-5} & 
			\multirow{19}{*}{1-14}\\  \cline{2-6}
			& 3 & 3 & 2 & 0 & 20 & &  &  \\ \cline{2-6}
			& 3 & 4 & 2 & 2 & 2 & &  &  \\ \cline{2-6}
			& 3 & 4 & 3 & -1 & 2 & &  &  \\ \cline{2-6}
			& 3 & 5 & 3 & 1 & 28 & &  &  \\ \cline{2-6}
			& 3 & 5 & 4 & -2 & 6 & &  &  \\ \cline{2-6}
			& 3 & 6 & 4 & 0 & 3 & &  &  \\ \cline{2-6}
			& 3 & 6 & 5 & -3 & 5 & &  &  \\ \cline{2-6}
			& 3 & 7 & 5 & -1 & 2 & &  &  \\ \cline{2-6}
			& 3 & 8 & 5 & 1 & 5 & &  &  \\ \cline{2-6}
			& 3 & 8 & 6 & -2 & 1 & &  &  \\ \cline{2-6}
			& 3 & 8 & 7 & -5 & 2 & &  &  \\ \cline{2-6}
			& 3 & 9 & 7 & -3 & 1 & &  &  \\ \cline{2-6}
			& 3 & 9 & 8 & -6 & 1 & &  &  \\ \cline{2-6}
			& 3 & 10 & 8 & -4 & 1 & &  &  \\ \cline{2-6}
			& 3 & 14 & 12 & -8 & 1 & &  &  \\ \cline{2-6}
			& 3 & 14 & 13 & -11 & 1 & &  &  \\ \cline{2-6}
			& 4 & 4 & 2 & 0 & 1 & &  &  \\ \hline
		\end{tabular}
	\end{adjustbox}
		\label{table:YiNFEPRCg2}
	
	\end{table} 
	
	\begin{table}[H]
		\caption{$A$, $\ell$ pairs (RPT) $g=2$. Total experiments: 420000.  \newline
			Irred Non-Arveson/Total irred extreme: $ 155 / 169771 \approx 0.0913 \%$  \newline
			All extreme: 416019. Num. reducibles: 246248. Num. errors: 3981.
			\newline
			The above includes 30000 points at level 1, 29999 of which are extreme.\protect\textsuperscript{\ref{note:level1nonextreme}} 
		}
	\begin{adjustbox}{width=\textwidth}
		\begin{tabular}{|c|c|c|c|c|c||c|c|c|}
			\hline
			\multirow{2}{*}{$g$} & \multicolumn{5}{|c||}{All irred non-Arveson extreme points} & \multicolumn{3}{c|}{All runs} \\ \cline{2-9}
			& $d$ & $n$ & $\dim \ker$ ($k$) & $g n-d k$ & \# non-free & \# runs per d,n & range of $d$ & range of $n$\\ \hline
			\multirow{14}{*}{2} & 3 & 2 & 1 & 1 & 131 & \multirow{14}{*}{10000} & 
			\multirow{14}{*}{3-5} & 
			\multirow{14}{*}{1-14}\\  \cline{2-6}
			& 3 & 3 & 2 & 0 & 7 & &  &  \\ \cline{2-6}
			& 3 & 4 & 2 & 2 & 2 & &  &  \\ \cline{2-6}
			& 3 & 4 & 3 & -1 & 2 & &  &  \\ \cline{2-6}
			& 3 & 5 & 4 & -2 & 2 & &  &  \\ \cline{2-6}
			& 3 & 6 & 4 & 0 & 2 & &  &  \\ \cline{2-6}
			& 3 & 6 & 5 & -3 & 1 & &  &  \\ \cline{2-6}
			& 3 & 8 & 5 & 1 & 1 & &  &  \\ \cline{2-6}
			& 3 & 9 & 8 & -6 & 1 & &  &  \\ \cline{2-6}
			& 3 & 11 & 7 & 1 & 1 & &  &  \\ \cline{2-6}
			& 3 & 12 & 10 & -6 & 1 & &  &  \\ \cline{2-6}
			& 3 & 14 & 9 & 1 & 1 & &  &  \\ \cline{2-6}
			& 4 & 2 & 1 & 0 & 2 & &  &  \\ \cline{2-6}
			& 4 & 5 & 2 & 2 & 1 & &  &  \\ \hline
		\end{tabular}
	\end{adjustbox}
		\label{table:YiNFEPRPTg2}
	\end{table} 
	
	\ssec{$g=d=3$}
	The $g=d=3$ case is dramatically different than the other cases in our experiments. As such, this case is excluded from discussion in the upcoming sections. A brief overview of our findings for $g=d=3$ follows.
	
	For $g=d=3$, we generated $10,000$ random linear functionals and optimizers for each level $n=1, \dots, 8$. For $n \ge 5$, more than 10\% of the optimizers are numerically ill-conditioned. This percentage increases to 18\% when $n = 8$. For $n \ge 2$, more than 60\% of the extreme points are reducible. The percentage of reducible extreme points gets as high as 80\% when $n = 8$. For larger $n$,  free extreme points tend to have the same kernel dimension, while the irreducible non-Arveson extreme points have a wide variety of kernel dimensions. 
	
	In the $g=d=3$ case, we again find that free extreme points are significantly more common than irreducible non-Arveson extreme points. However, when compared to other values of $g$ and $d$ where irreducible non-Arveson extreme points are extremely rare, irreducible non-Arveson extreme points are surprisingly common when $g=d=3$. In this case, we find that about $10\%$ percent of the irreducible optimizers found in our experiments are non-Arveson extreme points.

	\section{Dimension of $\ker L(\hX^\ell)$ Conjectures}
	\label{sec:kerSize}
	
		The irreducible minimizers $\hX^\ell$ that our experiments find are nearly certain to have  $L_A(\hX^\ell)$ with  surprisingly large kernels,
 namely,
	 \beq
	 \label{eq:gnVSdk}
		\frac{gn}{d} \leq \dim \ker L_A(\hX^\ell) .
	\eeq
 In light of Theorem \ref{thm:BeanCount}, this
 is very consistent with the finding in the previous section	 
that the $\hX^\ell$ are rarely non-Arveson.
Thus, while \S \ref{sec:nonArvRare}
and \S \ref{sec:kerSize}
are each surprising by themselves, one of them 
is not so surprising if you know the other.
	 
	As usual, the experiments we report on in this section are all performed on  irreducible free spectrahedra.

	\subsection{Upper and lower bound for the $L(\hX^\ell)$ kernel dimensions}
	\label{sec:bd kerdim}
	
	The following gives our observed upper and lower bounds on the dimension of the kernel of $L_A (\hX^\ell)$ for irreducible minimizers $\hX^\ell$. Here $\cD_A$ is an irreducible free spectrahedra.\\
	
	{
	  \begin{obs}
        \label{obs:BoundsOnKerDim}
		Fix $2\leq g < d $ or $g=d= 4 \mathrm{\ or \ } 5$. In our experiments,
		an irreducible minimizer $\hX^\ell$ 
		\ben
		\item
		\label{it:KerNCRLowerBound}
		is nearly certain to satisfy
		$$
		\frac{gn}{d} \leq \dim \ker L_A(\hX^\ell)
		$$
		\item
		\label{it:KerUpperBound}
		 (always) satisfies 
		$\dim \ker L_A(\hX^\ell) \leq  2n$
		\item
		\label{it:NonArvKerTypicalUpperBound} most non-Arveson extreme points satisfy
		$$
		\frac{gn}{d} > \dim \ker L_A(\hX^\ell).
		$$
		\een
	
	\end{obs}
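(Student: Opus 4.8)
The plan is to treat the three parts of Observation~\ref{obs:BoundsOnKerDim} separately, since they have rather different status: parts \eqref{it:KerNCRLowerBound} and \eqref{it:NonArvKerTypicalUpperBound} can be explained by the bean counts of Section~\ref{sec:Theory}, whereas part \eqref{it:KerUpperBound} is genuinely empirical and will be ``proved'' only by tabulating the raw data.

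For part \eqref{it:KerNCRLowerBound}, I would invoke Theorem~\ref{thm:BeanCount}\eqref{it:BeanCountIrred}: because the free spectrahedra considered here are irreducible, their defining tuple $A$ is minimal, so every $X \in \arv \cD_A(n)$ satisfies $gn \le d\kAX$, equivalently $\frac{gn}{d} \le \dim\ker L_A(X)$. By the results of Section~\ref{sec:nonArvRare} (in particular Observation~\ref{obs:irredNonArvLevelUpperBound}), an irreducible minimizer $\hX^\ell$ is \ncr{} an Arveson extreme point, so it is \ncr{} that it satisfies the asserted lower bound \eqref{eq:gnVSdk}. In fact the ``\ncr'' hedge in \eqref{it:KerNCRLowerBound} is \emph{precisely} the hedge on being Arveson: whenever $\hX^\ell$ is a (necessarily irreducible) Arveson extreme point the inequality holds with certainty, so \eqref{it:KerNCRLowerBound} can only fail on the rare non-Arveson minimizers enumerated in Section~\ref{sec:FEcount}.

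For part \eqref{it:NonArvKerTypicalUpperBound}, recall from Proposition~\ref{prop:ExtremeLinSys}\eqref{it:ArvLinSystem} that $\hX^\ell$ fails to be Arveson extreme exactly when the homogeneous system \eqref{eq:ArvLinSystem}, which has $gn$ unknowns and $d\kAX$ equations, has a nontrivial solution. Under the generic expectation recorded in Remark~\ref{rem:kerSize}, a homogeneous system with at least as many equations as unknowns has only the trivial solution; hence a ``generic'' non-Arveson point should satisfy $d\kAX < gn$, i.e. $\frac{gn}{d} > \dim\ker L_A(\hX^\ell)$. Part \eqref{it:NonArvKerTypicalUpperBound} then amounts to checking that this generic behaviour is what actually occurs for \emph{most} --- but, as Remark~\ref{rem:kerSize} already notes ($82$ out of $806$), not all --- of the non-Arveson points found. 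Concretely I would aggregate the column $gn - dk$ across the tables of Section~\ref{sec:FEcount}, report the fraction of irreducible non-Arveson extreme points with $gn - dk > 0$, and flag the exceptional rows ($gn - dk \le 0$) as the systematically degenerate instances.

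Part \eqref{it:KerUpperBound} is where I expect the real difficulty, and here I would not attempt a proof: I know of no argument giving the clean bound $\dim\ker L_A(\hX^\ell) \le 2n$, uniform in $d$, for irreducible Arveson extreme points. The ``proof'' I would offer is the experimental one --- across all retained runs, for every irreducible minimizer, record the pair $\bigl(n,\ \dim\ker L_A(\hX^\ell)\bigr)$ and verify the inequality directly --- together with two caveats. First, ``always'' must be read as ``always among runs passing the numerical tolerances of Section~\ref{sec:ZeroDetermine}''; runs in which $\dim\ker L_A(\hX^\ell)$ could not be reliably determined were discarded (Section~\ref{sec:NumErr}), and those discards are plausibly biased toward \emph{large} putative kernels. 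Second, the only heuristic available is the empirical correlation from Section~\ref{sec:NumErr} that tuples with unusually large $\ker L_A(X)$ tend to be reducible; turning this into a genuine upper bound on $\dim\ker L_A(X)$ for irreducible $X \in \cD_A(n)$ with $A$ irreducible of size $d$, and in particular explaining why that bound should be $2n$ rather than something growing with $d$, I would leave as an open problem.
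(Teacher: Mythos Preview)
Your proposal is correct and matches the paper's approach: the paper's ``Evidence'' for all three items is purely tabular (pointing to Tables~\ref{table:YiNFEPRC}--\ref{table:YiNFEPRCg2} and reporting the counts $724$ for item~\eqref{it:KerNCRLowerBound} and roughly $90\%$/$96\%$ for item~\eqref{it:NonArvKerTypicalUpperBound}), with the theoretical link to Theorem~\ref{thm:BeanCount} and Remark~\ref{rem:kerSize} made exactly as you describe in the preamble to Section~\ref{sec:kerSize}. One small inaccuracy: your claim that the ``\ncr'' hedge in \eqref{it:KerNCRLowerBound} is \emph{precisely} the hedge on being Arveson slightly overstates things, since (as the paper's $82/806$ figure shows) some non-Arveson minimizers still satisfy $gn/d \le \kAX$; the correct statement is the one-sided implication you give immediately after, namely that failures of \eqref{it:KerNCRLowerBound} are a subset of the non-Arveson cases.
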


	\evidence
	The evidence for Observation \ref{obs:BoundsOnKerDim} is contained in Tables \ref{table:YiNFEPRC} through \ref{table:YiNFEPRCg2}.
	
	Regarding item \eqref{it:KerNCRLowerBound}: All examples of minimizers $\hX^\ell$ which do not satisfy 
	\[
		\frac{gn}{d} \leq \dim \ker L_A(\hX^\ell)
	\]
	are reported on in the aforementioned tables. Out of over $6$ million total irreducible $\hX^\ell$ computed in our experiments, only about $724$ do not satisfy this bound.
	
	Regarding item \eqref{it:KerUpperBound}: All the minimizers we computed in our experiments satisfy this upper bound. For small values of $n$ this bound is achieved. However, for large values of $n$ this bound is not observed to be sharp. 
	
	Regarding item \eqref{it:NonArvKerTypicalUpperBound}: Out of the non-Arveson extreme points found by our experiments, about $90$ percent satisfy 
	\beq
	\label{eq:BeanSufficiencyGuess}
	\frac{gn}{d} > \dim \ker L_A(\hX^\ell).
	\eeq
	The majority of counter examples occur for free spectrahedra in two variables. If we restrict to $2<g<d$ or $g=d=4$ or $g=d=5$, then approximately $96$ percent of non-Arveson extreme points satisfy inequality \eqref{eq:BeanSufficiencyGuess}.

	Although about $90$ percent of irreducible non-Arveson extreme points satisfy inequality \eqref{eq:BeanSufficiencyGuess}, the number of non-Arveson extreme points we find that do not satisfy the bound is perhaps surprising in contrast to the line of thought mentioned in Remark \ref{rem:kerSize}.
	\qed
	
	In stark contrast to the $g<d$ or $g=d= 4 \mathrm{\ or \ } 5$ cases, less than $1$ percent of non-Arveson irreducible extreme points satisfy inequality \eqref{eq:BeanSufficiencyGuess} for $g=d=3$. 
	
\begin{rem}

We occasionally observe free spectrahedra $\cD_A$ on which the lower bound $\lceil\frac{gn}{d}\rceil$ for the dimension of the kernel of $L_A (\hX^\ell)$ for irreducible minimizers $\hX^\ell$ which are free extreme points of $\cD_A$ is not achieved in our experiments. \qed

\end{rem}

\begin{rem}
			\label{conj:irredLessPoly} If $\cD_A$ is an irreducible free spectrahedron for which the bound in Observation \ref{obs:BoundsOnKerDim} \eqref{it:KerUpperBound} holds and if $\cD_B$ is any bounded free polytope, then Theorem \ref{thm:BeanCount} \eqref{it:BeanCountPolytope} would immediately imply that for all $n \in \bbN$ we have
	\[
	 \max_{X \in \cD_A(n)} \dim \ker L_A(X) \leq   \min_{Y \in \arv \cD_B(n)} \dim \ker L_B(Y).
	 \]
	 The fact that $\cD_B(n)$ has Arveson extreme points for each $n$ is a consequence of \cite[Proposition 6.1]{EHKM18}. \qed
\end{rem}
	
	\subsection{Free vs Non-Free Kernel Dimensions} 
	
	The following  observation  compares the dimensions of kernels of non-Arveson extreme points to Arveson extreme points.
	
	\begin{obs}
		\label{conj:EucMaxLessArvMax}
		Suppose $2 \leq g \leq d$  except for $g=d=2$, and let $\cD_A$ be a bounded irreducible free spectrahedron. Then for any natural number $n$ we observe that one has 
		\begin{enumerate}
		\item 
		\label{it:maxEucVsMaxArv}
		$$\max_{X \in \euc \cD_A(n) \backslash \arv \cD_A(n)} \dim \ker L_A(X)<\max_{X \in \arv \cD_A(n)} \dim \ker L_A(X).$$
		
		\item 
		\label{it:maxEucVsMinArv}
		If one also avoids $g=2, d=3$ and $g=d=3$, then
			$$\max_{X \in \euc \cD_A(n) \backslash \arv \cD_A(n)} \dim \ker L_A(X) \leq \min_{X \in \arv \cD_A(n)} \dim \ker L_A(X).$$
		\end{enumerate}
		
	\end{obs}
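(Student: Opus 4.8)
Since this is an empirical observation, I would first isolate the theoretical skeleton that forces the inequalities generically, then record precisely what remains genuinely open, and finally verify the inequalities over the corpus of randomly generated spectrahedra via the tables of Section~\ref{sec:FEcount}.

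\textbf{Theoretical skeleton.} The two sides are controlled by the solvability counts of Section~\ref{sec:Theory}. By Theorem~\ref{thm:BeanCount}\eqref{it:BeanCountIrred} every $X\in\arv\cD_A(n)$ has $\dim\ker L_A(X)\geq\lceil gn/d\rceil$, so $\min_{X\in\arv\cD_A(n)}\dim\ker L_A(X)\geq\lceil gn/d\rceil$; by Theorem~\ref{thm:BeanCountEuc}\eqref{it:BeanCountEucIrred} every Euclidean extreme point has $\dim\ker L_A(X)\geq\lceil g(n+1)/(2d)\rceil$, the floor relevant to a non-Arveson Euclidean extreme point. The same counts predict \emph{generically} that a non-Arveson Euclidean extreme point \emph{also} obeys $\dim\ker L_A(X)\leq\lceil gn/d\rceil$: the Arveson system of Proposition~\ref{prop:ExtremeLinSys}\eqref{it:ArvLinSystem} has $d\,k_{A,X}$ equations in $gn$ unknowns, so it has a nonzero solution $\beta$ only if $d\,k_{A,X}\leq gn$, unless the system degenerates (compare Remark~\ref{rem:kerSize} and Observation~\ref{obs:BoundsOnKerDim}\eqref{it:NonArvKerTypicalUpperBound}). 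Granting this upper bound, part~(2) is immediate, since $\lceil gn/d\rceil\leq\min_{X\in\arv\cD_A(n)}\dim\ker L_A(X)$; part~(1) then needs in addition only that $\max_{X\in\arv\cD_A(n)}\dim\ker L_A(X)$ strictly exceeds $\lceil gn/d\rceil$, which in the data it amply does, that maximum being close to the absolute bound $2n$ of Observation~\ref{obs:BoundsOnKerDim}\eqref{it:KerUpperBound} and attained for small $n$. The elementary arithmetic that makes $\lceil gn/d\rceil$ strictly exceed the Euclidean floor, and that singles out the excluded cases $g=d=2$, $g=2,d=3$, $g=d=3$, I would check by hand.

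\textbf{Non-emptiness and large kernels on the Arveson side.} To feed part~(1) honestly one can at least exhibit Arveson extreme points at level $n$ with sizable kernels: if $Z^1,\dots,Z^m\in\arv\cD_A$ have sizes summing to $n$, then $Z^1\oplus\cdots\oplus Z^m\in\arv\cD_A(n)$, because compressing a dilation of the direct sum onto the $j$-th block together with the adjoined coordinate produces a dilation of $Z^j$ and hence kills the corresponding part of $\beta$; and the canonical shuffle gives $\dim\ker L_A(Z^1\oplus\cdots\oplus Z^m)=\sum_j\dim\ker L_A(Z^j)$. Since $\cD_A$ has Arveson extreme points (Theorem~\ref{thm:AbsSpanMin}) and, empirically, level-one extreme points of $\cD_A$ are Arveson extreme, this yields Arveson extreme points at every level $n$ with $\dim\ker L_A\geq n$; producing some with $\dim\ker$ strictly above $\lceil gn/d\rceil$, which is what part~(1) really wants, is exactly the kind of statement the data confirms but a general construction would have to engineer.

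\textbf{Main obstacle, and fallback.} The step I cannot supply in general is the a priori bound $\dim\ker L_A(X)\leq\lceil gn/d\rceil$ for \emph{every} non-Arveson Euclidean extreme point $X$, not just generic ones. The natural attack is through the kernel containment \eqref{eq:ExtremeKerContain}: such an $X$ admits a nonzero \emph{vector} tuple $\beta$ with $\ker L_A(X)\subseteq\ker\Lambda_A(\beta^T)$ but no nonzero \emph{symmetric} one, and one would hope that once $\dim\ker L_A(X)$ exceeds $\lceil gn/d\rceil$, its size together with the block structure of $\Lambda_A$ promotes the vector solution to a symmetric solution (contradicting Euclidean extremality) or else forces $X$ to be reducible. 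I expect this to fail in general: the counts behind Theorems~\ref{thm:BeanCount}--\ref{thm:BeanCountEuc} are only generic, systematic degeneracy genuinely occurs in these systems (the $82$ of $806$ non-Arveson points with $d\,k_{A,X}\geq gn$), and the $(g,d)$ excluded from the statement are precisely the small regimes where the degeneracy is heavy enough to break the comparison. So the actual support is experimental: for each fixed randomly generated bounded irreducible $\cD_A$ and each level $n\leq 8$, and for the $A,\ell$-pair runs across the stated $g,d$ ranges, one records $\dim\ker L_A(\hX^\ell)$ for every Arveson extreme optimizer and every non-Arveson Euclidean extreme optimizer found and checks both inequalities directly; the complete list of non-Arveson extreme points with their kernel dimensions is the content of the tables of Section~\ref{sec:FEcount}, and the distribution of Arveson-extreme kernel sizes is treated in Sections~\ref{sec:kerSize} and \ref{sec:KerDimDistri}.
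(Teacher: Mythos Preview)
Your proposal is correct in that it ultimately rests on the experimental data, but it takes a substantially different route from the paper. The paper's support for this observation is one line: ``The evidence is purely experimental; no counterexamples were found,'' followed by two short qualifications explaining why $g=d=3$ violates item~\eqref{it:maxEucVsMaxArv} when restricted to irreducible Arveson points (but not when reducible ones are allowed) and why $g=2,d=3,n=14$ produced apparent violations of item~\eqref{it:maxEucVsMinArv}. There is no theoretical skeleton, no direct-sum construction, and no attempt to derive the inequalities from the solvability counts.

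Your approach adds genuine explanatory content: the heuristic that non-Arveson Euclidean extreme points should generically satisfy $d\,k_{A,X}<gn$ (hence $k_{A,X}\leq\lceil gn/d\rceil-1$) while Arveson points must satisfy $k_{A,X}\geq\lceil gn/d\rceil$ does give a clean generic picture of why item~\eqref{it:maxEucVsMinArv} ought to hold, and you correctly flag that the $82/806$ degenerate cases from Remark~\ref{rem:kerSize} are exactly what blocks this from being a theorem. Your direct-sum argument for producing Arveson extreme points at every level is sound and is implicitly what the paper invokes when it notes that allowing reducible Arveson points rescues item~\eqref{it:maxEucVsMaxArv} at $g=d=3$. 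What the paper's bare-bones approach buys is honesty about the observation's status: it is a pattern in the data, not a shadow of a theorem, and the excluded parameter ranges are excluded because counterexamples (or near-counterexamples) were actually seen there, not because the arithmetic of $\lceil gn/d\rceil$ versus $\lceil g(n+1)/2d\rceil$ breaks down. Your write-up would be improved by leading with the experimental verification and presenting the solvability-count heuristic as post-hoc explanation rather than as the plan.
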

	
	\evidence The evidence is purely experimental; no counterexamples were found. 
	
	As to item \ref{it:maxEucVsMaxArv}, for $g=d=3$, the kernel dimension of non-Arveson extreme points may exceed the largest kernel dimension of irreducible Arveson extreme points. However, we do not observe counter examples when allowing reducible Arveson extreme points. With regard to item \ref{it:maxEucVsMinArv}, for $g=2$, $d=3$, $n=14$, we have only three Euclidean non-Arveson extreme points; they are of kernel dimension 9, 12 and 13, and the smallest kernel dimension of Arveson extreme points we observed is 10. For $g=d=3$, the kernel dimension of Euclidean non-Arveson extreme points usually exceed the smallest kernel dimension of Arveson extreme points.

	As there are no counter examples, one could consider making this observation a conjecture.\qed
	
	\begin{rem}
		Although our paper mostly restricts to irreducible minimizers, one can show that if there is a reducible tuple of size $n$ that violates the conjecture, then there is an integer $m < n$ such that the conjecture is violated when restricted to irreducible tuples at level $m$.
		\qed
	
	\end{rem}
	
	\section{How the Kernel Dimensions Are Distributed}
	\label{sec:KerDimDistri}
	
	This section presents patterns our experiment found in the distribution of the dimension of $\ker L_A (\hX^\ell)$ for irreducible minimizers $\hX^\ell$. We restrict our attention to presenting results for RC linear functionals.

	\subsection{Distribution of Kernel Sizes} 
    		
Fix $g,d,n \in \mathbb{N}$ and let
     $\Omega_{g,d,n}$ be the set of pairs $(A,\ell)$ which can arise in our experiments when using \RC \ linear functionals. See Section \ref{sec:GenerationOfAell} for details on which pairs $(A,\ell)$ are admissible in $\Omega_{g,d,n}$. 
     
    Noting that $\Omega_{g,d,n}$ is a finite set, let $\mu_{g,d,n}$ denote the uniform measure on $\Omega_{g,d,n}$. Define $K_{g,d,n}$ to be the random variable on sample space $(\Omega_{g,d,n}, \mu_{g,d,n})$ defined by $K_{g,d,n}(A,\ell) = \dim \ker(\hX^\ell)$.

	\begin{obs}
			 \label{obs:KgdnIsUnimodal} Fix $2 < g < d$ or $g=d= 4 \mathrm{\ or \ } 5$, and let $n \in \bbN$. Then, the probability density function (PDF) \footnote{Since $K_{g,d,n}$ is discrete random variable, some authors may use the term probability mass function (PMF) instead of probability density function (PDF).} of $K_{g,d,n}$ conditioned on the minimizers $\hX^\ell$ being irreducible is well approximated by a Gaussian curve.
    \end{obs}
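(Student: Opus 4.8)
This is an empirical statement, so the argument I have in mind combines a careful presentation of numerical evidence with a heuristic that makes the Gaussian shape plausible. The plan is first to fix a spread of triples $(g,d,n)$ satisfying $2<g<d$ or $g=d\in\{4,5\}$, to sample a large number of admissible pairs $(A,\ell)\in\Omega_{g,d,n}$ using \RC\ linear functionals, to compute each minimizer $\hX^\ell$, to discard the reducible optimizers, and to tabulate the empirical distribution of $\dim\ker L_A(\hX^\ell)$. I would then overlay the Gaussian with the same mean and variance as the sample (or a least-squares fit) and report a goodness-of-fit diagnostic --- a $\chi^2$ statistic, or a quantile-quantile comparison --- over the full range of tested $(g,d,n)$, noting how the agreement sharpens as $n$, and hence the support of $K_{g,d,n}$, grows. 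As a side computation I would also record how the empirical mean and variance of $K_{g,d,n}$ scale with $n$ for each fixed $(g,d)$; this is consistent with the linear envelope $\lceil gn/d\rceil\le\dim\ker L_A(\hX^\ell)\le 2n$ coming from Observation \ref{obs:BoundsOnKerDim}.

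For a heuristic explanation I would lean on Proposition \ref{prop:ExtremeLinSys} and Theorem \ref{thm:BeanCount}: the kernel dimension of $L_A(\hX^\ell)$ is exactly what feeds the solvability counts there, and it is governed by the rank of the structured linear map $\psi_{A,\hX^\ell}$ of \eqref{eq:ArvEucMap}. The idea is to express this dimension, once $n$ is moderately large, as a sum of many weakly dependent integer-valued contributions of bounded range --- for instance block-by-block increments along a basis adapted to $\ker L_A(\hX^\ell)$ --- so that a central-limit heuristic predicts approximate normality of the sum. Turning this into a genuine theorem would require exhibiting an exchangeable or mixing structure among those contributions and controlling their covariances; at a minimum I would display the decomposition on small cases and check empirically that the per-block increments are only weakly correlated.

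The step I expect to be the real obstacle, and the reason this is stated as an Observation rather than a Theorem, is the joint law of the optimizer. The tuple $\hX^\ell$ is the argmin of a random linear functional over a random spectrahedron, and I see no handle on its distribution strong enough to extract a bona fide central limit theorem; the dependence between the ``blocks'' in the heuristic above is precisely the piece I cannot rigorously estimate. A secondary obstacle is that $K_{g,d,n}$ is a discrete random variable whose support contains only $O(n)$ points, so ``well approximated by a Gaussian curve'' can only be meant in a rescaled, asymptotic-in-$n$ sense, whereas the experiments reach only $n\le 8$ (occasionally $n\le 14$); honestly quantifying the approximation error is therefore delicate. The realistic deliverable is thus a convincing evidence section together with the plausibility argument above, rather than a proof.
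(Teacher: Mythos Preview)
Your empirical plan is essentially what the paper does, though the paper's evidence is more modest: it fits Gaussians by (ordinary and weighted) least squares to the empirical PMF for a handful of specific $(g,d,n)$ triples --- e.g.\ $(4,5,6)$ and $(5,5,6)$ --- reports the resulting $\mu,\sigma$ and fit error, and shows the plots. No $\chi^2$ statistic or QQ comparison is used, no scaling study of the mean and variance in $n$ is carried out, and the only alternative family mentioned is a brief remark that logistic fits were tried and found inferior. So on the data side your proposal is aligned with, and somewhat more thorough than, the paper's own treatment.

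Where you diverge is the central-limit heuristic. The paper offers no theoretical explanation of any kind; the observation rests on the fitted curves alone. Your heuristic, however, has a genuine gap: the proposed decomposition of $\dim\ker L_A(\hX^\ell)$ into ``many weakly dependent integer-valued contributions'' along block lines has no evident realization in this setting. The experiments are run with \emph{irreducible} $A$, so $L_A(X)=I_{dn}-\sum_i A_i\otimes X_i$ carries no natural block structure, and the kernel dimension is a single rank defect of a dense $dn\times dn$ matrix rather than a sum of per-block defects; the map $\psi_{A,\hX^\ell}$ of \eqref{eq:ArvEucMap} likewise does not split in any obvious way. Without a concrete decomposition into approximately independent summands, the CLT analogy is a hope rather than a mechanism --- you flag this as the obstacle, but it is in fact the whole of the heuristic, not a technicality within it. Presenting it as a plausibility argument is fine, but be aware that the paper does not attempt one, and yours as stated would need a specific candidate for the summands before it carries any explanatory weight.
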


	Gaussian curves are graphs of functions of the form
	$$G(x)=\frac{1}{\sqrt{2 \pi \sigma^2}}e^\frac{(x-\mu)^2}{2\sigma^2}.$$
	We use least square error and weighted least square error to approximate the PDF of our data set with a Gaussian curve.

	We next illustrate Observation \ref{obs:KgdnIsUnimodal} on a few examples. In these examples we consider irreducible minimizers $\hX^\ell$ for randomly generated $(A,\ell)$ pairs where $A \in \smdrg$ is a tuple which defines a bounded irreducible free spectrahedron and $\ell$ is a linear functional defined on $\cD_A (6)$. Here either $g=4$ and $d=5$ or $g=d=5$.

	\subsubsection{Least Square Error Fits} The least square error is defined to be
	$$\sqrt{\frac{1}{|\cK|} \cdot \sum_{k \in \cK} (G(k)-d_k)^2}$$ 
	where $\cK$ is the set of all kernel dimensions we observe in the experiment, $d_k$ is the probability of kernel dimension $k$ in our experiment.

	The raw data listed below has the following form: 
	\[
	\left\{\dim \ker L_A (\hat{X}^\ell), \frac{ \mathrm{number \ of \ occurrences \ of \ kernel \ dimension\ }}{ \mathrm{total \ number \ of\  irreducible\ } \hat{X}^\ell} \right\}.
	\] 
	\begin{enumerate}
    	\item 
    	$g=4$, $d=5$, $n=6$:
    	    $$\{\{5,\frac{2}{9076}\},\{6,\frac{1016}{9076}\},\{7,\frac{5878}{9076}\},\{8,\frac{2145}{9076}\},\{9,\frac{34}{9076}\},\{10,\frac{1}{9076}\}\}$$
    
    	    Gaussian fit (left): $\mu \rightarrow 7.13537, \sigma \rightarrow 0.600874$. The error is 0.000855473.
    	
        \item $g=5$,$d=5$,$n=6:$
            $$\{\{6,\frac{5}{9165}\},\{7,\frac{367}{9165}\},\{8,\frac{4313}{9165}\},\{9,\frac{4062}{9165}\},\{10,\frac{402}{9165}\},\{11,\frac{16}{9165}\}\}$$

        	Gaussian fit (right): $\mu \rightarrow 8.4776, \sigma \rightarrow 0.646746.$ The error is 0.00322601.

	\end{enumerate}
	
        	\begin{center}
        	    \begin{minipage}{0.3\textwidth}
        			\includegraphics[width=\linewidth]{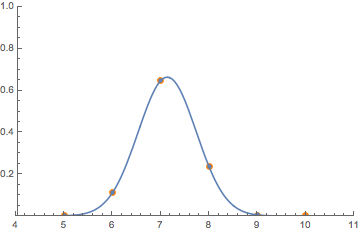} \par \center{$g=4$, $d=5$, $n=6$}
        		\end{minipage}
        		\hspace{0.1\textwidth}
        		\begin{minipage}{0.3\textwidth}
        			\includegraphics[width=\linewidth]{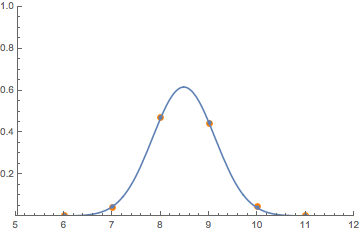} \par \center{$g=5$, $d=5$, $n=6$}
        		\end{minipage}
        	\end{center}
        	
	Here the $x$-axis is the dimension of the kernel of $L_A (\hX^\ell)$, and the $y$-axis is the frequency of that kernel dimension. 

	\subsubsection{Weighted Least Square Error Fits} Since our data set has many points which are close to zero, we also consider the weighted least square error 
	$$\sqrt{\frac{1}{|\cK|} \cdot \sum_{k \in \cK} \left(\frac{G(k)-d_k}{d_k}\right)^2}$$

	The NMinimize function is used to generate the local minimum of the error term. The minimization is initialized by setting the initial parameters $\mu$ to be in the range of our data set with increment 0.01 and $\sigma$ to be the standard deviation of our data.
	
	\begin{enumerate}
	    \item $g=4$, $d=5$, $n=6$:
	        $$\{\{5,\frac{2}{9076}\},\{6,\frac{1016}{9076}\},\{7,\frac{5878}{9076}\},\{8,\frac{2145}{9076}\},\{9,\frac{34}{9076}\},\{10,\frac{1}{9076}\}\}$$

	        Gaussian: $\mu \rightarrow 7.20503, \sigma \rightarrow 0.551157$. The error is 0.438183.
	        
	    \item $g=5$, $d=5$, $n=6$:
	        $$\{\{6,\frac{5}{9165}\},\{7,\frac{367}{9165}\},\{8,\frac{4313}{9165}\},\{9,\frac{4062}{9165}\},\{10,\frac{402}{9165}\},\{11,\frac{16}{9165}\}\}$$

	        Gaussian: $\mu \rightarrow 8.5492, \sigma \rightarrow 0.684021$. The error is 0.257735.
	\end{enumerate}

	  \begin{center}
		\begin{minipage}{0.3\textwidth}
			\includegraphics[width=\linewidth]{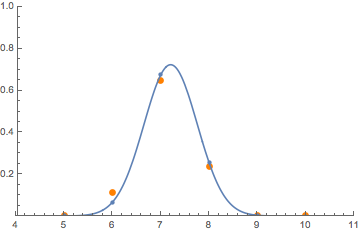} \par \center{$g=4$, $d=5$, $n=6$, (weighted)}
		\end{minipage}
		\hspace{0.1\textwidth}
		\begin{minipage}{0.3\textwidth}
			\includegraphics[width=\linewidth]{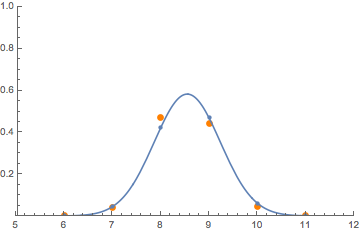} \par \center{$g=5$, $d=5$, $n=6$, (weighted)}
		\end{minipage}
	\end{center}

We do not see a definitive superiority of the accuracy of least square fits versus weighted least square fits. Both seem to give reasonably good approximations.

In addition to trying Gaussian fits, we also tried logistic fits. While the logistic curves fit our data reasonably well, the Gaussian fits were always superior. For this reason we do not discus logistic fits further. 

\ssec{Gaussian fits may fail for $g=2$}  Observation \ref{obs:KgdnIsUnimodal} does not always hold when we take $g=2$. For $g=2$ and large $n$ there are cases where $K_{g,d,n}$ is not well fit by a Gaussian curve with least square error and/or weighted least square error. These cases are shown below.

		\begin{center}
		\begin{minipage}{0.3\textwidth}
			\includegraphics[width=\linewidth]{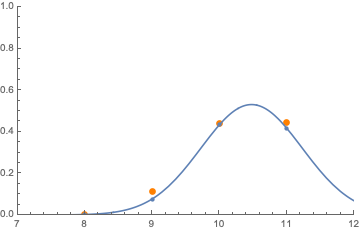} \par \center{$g=2$, $d=3$, $n=12$}
		\end{minipage}
		\hspace{0.1\textwidth}
		\begin{minipage}{0.3\textwidth}
			\includegraphics[width=\linewidth]{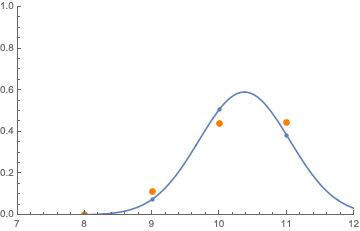} \par \center{$g=2$, $d=3$, $n=12$, (weighted)}
		\end{minipage}
	\end{center}

	\begin{center}
		\begin{minipage}{0.3\textwidth}
			\includegraphics[width=\linewidth]{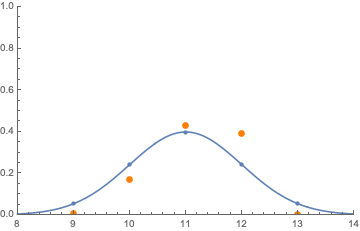} \par \center{$g=2$, $d=3$, $n=13$}
		\end{minipage}
		\hspace{0.1\textwidth}
		\begin{minipage}{0.3\textwidth}
			\includegraphics[width=\linewidth]{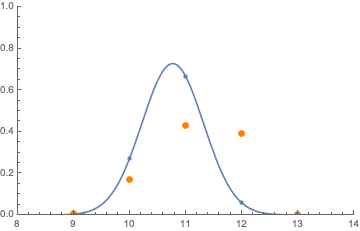} \par \center{$g=2$, $d=3$, $n=13$, (weighted)}
		\end{minipage}
	\end{center}

	\begin{center}
		\begin{minipage}{0.3\textwidth}
			\includegraphics[width=\linewidth]{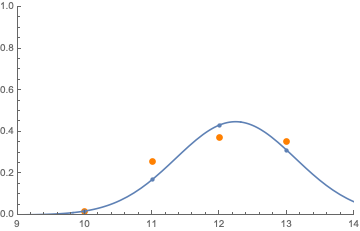} \par \center{$g=2$, $d=3$, $n=14$}
		\end{minipage}
		\hspace{0.1\textwidth}
		\begin{minipage}{0.3\textwidth}
			\includegraphics[width=\linewidth]{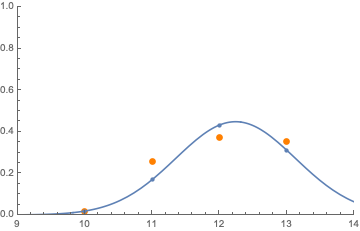} \par \center{$g=2$, $d=3$, $n=14$, (weighted)}
		\end{minipage}
	\end{center}

	\begin{center}
		\begin{minipage}{0.3\textwidth}
			\includegraphics[width=\linewidth]{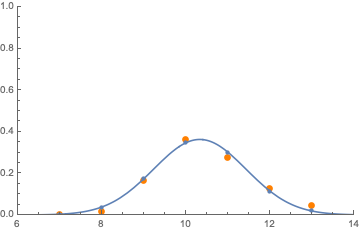} \par \center{$g=2$, $d=5$, $n=14$}
		\end{minipage}
		\hspace{0.1\textwidth}
		\begin{minipage}{0.3\textwidth}
			\includegraphics[width=\linewidth]{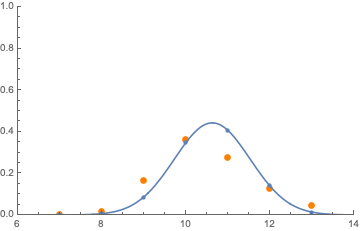} \par \center{$g=2$, $d=5$, $n=14$, (weighted)}
		\end{minipage}
	\end{center}
	
	One may notice that the values for $n$ for which $K_{2,3,n}$ fails to be fit by a Gaussian curve are $n= 12, 13, 14$. These values are on the large side for the typical range of $n$ used in our experiments, namely $n=1-8$. This may lead one to wonder if the failure for $K_{g,d,n}$ to be fit by a Gaussian curve is phenomena which occurs for $g=2$ or a phenomena which occurs for large $n$. However, as shown in Table \ref{table:YiNFEPRC}, for $g=3$ and $d=5,6,7$, we ran experiments on levels $n=1-13$. For these values of $g,d,n$, we found that the distribution of $K_{g,d,n}$ is well fit by a Gaussian curve. Also, for $g=2$, $d=3$, $n=1-8$, there are at most 3 data points, so fitting is moot. These lead us to believe that the failure of $K_{g,d,n}$ to be fit by a Gaussian curve is indeed a $g=2$ phenomena rather than a large $n$ phenomena. 
	
	\section{Reducible vs. Irreducible Extreme Points}
	\label{sec:irredExp}
	
\def\nearcertred{nearly certain that a \RC \  and a \RPT \  linear functional $\ell$ has a minimizer $\hX^\ell$ over a $g,d$-free spectrahedron $\cD_A$} 

	In this section, we shall see that the proportion of reducible optimizers generated using \RC \ linear functionals is monotone non-increasing as $n$ increases; indeed, the proportion of reducible optimizers is well fit by an exponential function. When using \RPT \ linear functionals, we again always observe monotone behaviour in the proportion of reducible extreme points; however, in this setting we observe both monotone non-increasing and monotone non-decreasing behaviour.

Fix an integer $n$ and suppose $\cD_A$ is an irreducible free spectrahedron. For a linear functional $\ell: \smnrg \to \bbR$, let $\hX^\ell$ denote the minimizer of $\ell$ over $\cD_A (n)$ as usual. We let 
	\[
	 p_n(A):=\frac{\# \text{reducible } \hX^\ell}{\# \text{irreducible } \hX^\ell+ \# \text{reducible } \hX^\ell}
	 \index{$p_n(A)$}
	 \]
	 denote the ratio of reducible $\hX^\ell$ generated to the total number of $\hX^\ell$ generated when $\ell$ varies over the collection of random linear functionals chosen in the particular experiments under discussion.

Similarly, for fixed natural numbers $g$ and $d$ we let
	\[
		p_n(g,d):=\frac{\# \text{reducible } \hX^\ell}{\# \text{irreducible } \hX^\ell+ \# 		\text{reducible } \hX^\ell}
		\index{$p_n(g,d)$}
	\]  
denote the ratio of reducible $\hX^\ell$ generated to the total number of $\hX^\ell$ generated when the pair $(A,\ell)$ varies over a collection of pairs consisting of an irreducible defining tuples $A \in \smdrg$ for bounded free spectrahedra with a \RC \ or \RPT \ linear functional $\ell$. 

We briefly note that the case $n=1$ is omitted in the following discussion as for any $d$ and $g$ and for any $A \in \smdrg$ one always has $p_1 (g,d)=0$ and $p_1 (A)=0$ since a tuple of real numbers is always irreducible. 
	
	\subsection{RC Behaviour}
	In this subsection we present experimental results for $p_n(A)$ and $p_n(g,d)$ found when using RC linear functionals. 
	
	\begin{obs} In all of our experiments, excluding $g=d=3$, using \RC \ linear functionals we observe the following.
		\label{obs:RCLFRedProp}
		\begin{enumerate}
			
			\item 
			\label{it:pngdBehavior}
			If $g \le d$, then it is \ncr\ that $p_n(g,d)$ is monotone decreasing as $n$ increases and 
			also typically one has 
			$$p_n(g,d) \to 0 \text{ as } n \to \infty.$$

			\item
			\label{it:pnABehavior}
			 If $A \in \smdrg$ with $3 \le g \le d$, then it is \ncr\ that $p_n(A)$ is monotone decreasing as $n$ increases and also typically one has 
			$$p_n(A) \to 0 \text{ as } n \to \infty.$$ 
		
		\end{enumerate}
		
	\end{obs}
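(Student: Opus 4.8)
The plan is to support Observation~\ref{obs:RCLFRedProp} entirely with experimental evidence, in the spirit of the evidence blocks attached to the other observations in this paper; the claim is a statement about empirically observed frequencies, so no proof in the traditional sense is attempted. For the aggregate quantity $p_n(g,d)$ I would fix each admissible pair $(g,d)$ with $g \le d$ (excluding the anomalous case $g=d=3$), generate a large family of irreducible defining tuples $A \in \smdrg$ for bounded free spectrahedra together with \RC\ linear functionals $\ell$, compute the minimizer $\hX^\ell$ over $\cD_A(n)$ at each level $n = 2, \ldots, 8$ (and up to $n = 13$ for $d = 5,6,7$), test each minimizer for irreducibility via its symmetric commutant as in Section~\ref{sec:NumVerIrred}, and record the empirical ratio $p_n(g,d)$. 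For the per-spectrahedron quantity $p_n(A)$ with $3 \le g \le d$ I would instead fix each of the roughly $60$ selected spectrahedra and vary $\ell$ over thousands of \RC\ functionals at each level $n$, recording $p_n(A)$ separately for each $\cD_A$.

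The evidence itself would have two ingredients. First, a table listing $p_n(g,d)$, and for representative $A$ the sequences $p_n(A)$, as $n$ ranges over the tested levels; from this one reads off directly that every observed sequence is monotone non-increasing in $n$, with no exceptions, which is what ``\ncr'' asserts. Second, for each such sequence I would fit a decaying exponential $p_n \approx C\rho^{\,n}$ with $0 < \rho < 1$ --- equivalently, fit $\log p_n$ by an affine function of $n$ --- by least squares, and report the fitted parameters together with the residuals; the quality of these fits is the quantitative content behind ``$p_n(g,d) \to 0$ as $n \to \infty$.'' I would also display alongside it the contrast with the \RPT\ case of the next subsection, where the identical procedure still yields monotone sequences but ones that can be monotone non-decreasing, to emphasize that the decay seen here is a feature of the \RC\ distribution and not of free spectrahedra per se. A heuristic one could offer for the decay is that the reducible tuples form a proper algebraic subvariety of $\smnrg$ whose codimension grows with $n$, so a generic Euclidean extreme point, hence a generic optimizer, is increasingly unlikely to lie on it; but making this rigorous would require controlling the geometry of the extreme boundary of $\cD_A(n)$, which I do not attempt here.

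The main obstacle is statistical rather than algebraic. At the larger values of $n$ reducible minimizers become very scarce, so the counts entering $p_n$ are small and the measured proportions are noisy; certifying ``near certainty'' of monotonicity then demands either many more samples at high $n$ or a careful accounting showing that any apparent reversal is within sampling error. A secondary concern, already flagged in Section~\ref{sec:NumErr}, is that minimizers with unusually large $\ker L_A(\hX^\ell)$ --- which the data suggest are disproportionately likely to be reducible --- are precisely the ones most often discarded for failing the kernel-dimension tolerance, so the reported $p_n$ may slightly underestimate the true proportion; I would test robustness by re-running a subset of the experiments with tightened tolerances. Finally, I would want to account for the hypothesis $g \ge 3$ in item~\eqref{it:pnABehavior}: the expectation is that for $g = 2$ a single spectrahedron can exhibit a genuinely non-monotone $p_n(A)$ --- consistent with the erratic $g = 2$ behaviour already seen for kernel-size distributions in Section~\ref{sec:KerDimDistri} --- even though averaging over many spectrahedra still produces a monotone $p_n(2,d)$, and exhibiting one such spectrahedron would sharpen the statement.
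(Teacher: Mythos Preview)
Your approach is essentially the same as the paper's---this is an empirical observation, so the ``proof'' is purely the experimental evidence you describe, and your methodology (compute $p_n(g,d)$ over the $(A,\ell)$ pairs from Tables~\ref{table:YiNFEPRC} and~\ref{table:YiNFEPRCg2}, compute $p_n(A)$ on a collection of fixed spectrahedra, then fit exponentials) matches what the paper does.

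There is, however, one substantive factual misreading. You write that the tables would show every observed sequence is monotone non-increasing ``with no exceptions, which is what `\ncr' asserts.'' That is not what the paper finds, and it is not what ``nearly certain'' means here. The paper explicitly records small deviations from monotonicity: for $p_n(3,6)$ there is a slight increase as $n$ goes from $10$ to $11$ and from $11$ to $12$, and for $p_n(2,5)$ one run of $10{,}000$ showed an increase from level $12$ to $13$ (though averaging over four runs restored the decrease). The paper says outright: ``The reason we say it is nearly certain rather than certain is because we occasionally observe a very small deviation from monotonicity.'' So your ``main obstacle'' paragraph about sampling noise at high $n$ is exactly on point, but you should present it as something that \emph{did} occur, not merely as a risk.

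Two smaller points. First, the exponential fit you propose is separated out in the paper as its own Observation~\ref{obs:RCLFExpFit}, not folded into the evidence for this one. Second, for item~\eqref{it:pnABehavior} the paper tested about $30$ spectrahedra (not $60$) and found one anomaly, attributed not to $g=2$ but to that spectrahedron having an abnormally high rate of numerically ill-conditioned points ($4318/5000$ at $n=8$); your speculation that the $g\ge 3$ restriction is there because some $g=2$ spectrahedron exhibits genuine non-monotonicity is plausible but is not what the paper documents.
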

	
	\evidence For Observation \ref{obs:RCLFRedProp} \eqref{it:pngdBehavior}, $p_n(g,d)$ was computed for all values of $g,d,n$ that occur in Table \eqref{table:YiNFEPRC} and \eqref{table:YiNFEPRCg2}. For Observation \ref{obs:RCLFRedProp} \eqref{it:pnABehavior}, $p_n(A)$ was computed for about $30$ different free spectrahedra. $p_n (A)$ was found to be non-decreasing on all but one of these free spectrahedra. The one anomaly example had an abnormally large number of numerically ill-conditioned points. On this spectrahedron, 4318/5000 points were found to numerically ill-conditioned at level $n=8$. 
		 \qed

	Note: The reason we say it is nearly certain rather than certain is because we occasionally observe a very small deviation from monotonicity. Details (mostly about item \ref{it:pngdBehavior}) are given in the following subsubsections.

	\sssec{Small deviations from monotone behaviour can occur in $p_n (g,d)$}
	
	When using \RC \ linear functionals  in one experiment, we observe a slight increase in $p_n (3,6)$ as $n$ increases from $10$ to $11$ and as $n$ increases from from $11$ to $12$. The following table lists experimentally observed values of $p_n (3,6)$ for $2 \leq n \leq 13$. For each $n$, a total of $10,000$ pairs $(A,\ell)$ were generated.
	
       {\footnotesize{
    \begin{table}[h]
    		\begin{tabular}{|c|c|c|c|c|c|c|c|c|c|c|c|c|}
    			\hline
    			 2 & 3 & 4 & 5 & 6 & 7 & 8 & 9 & 10 & 11 & 12 & 13 \\ \hline
    			 15.3\% & 9.74\% & 5.48\% & 2.82\% & 1.92\% & 1.11\% & 0.880\% & 0.594\% & 0.376\% & 0.389\% & 0.392\% & 0.192\% \\ \hline
    		\end{tabular}
    		\caption{Observed values for $p_2(3,6)$ through $p_{13}(3,6)$.}
	\end{table}
    }}

	To more closely examine cases where monotonicity failed,  we looked at $p_n (2,5)$, where we observed a slight increase from 1.78\% to 2.18\% when level goes from 12 to 13. We did 4 experiments, with different seeds and each has 10000 cases on each level.

	     \begin{table}[h]
	 	\begin{tabular}{|c|c|c|}
	 		\hline
	 		Exp. No. & $p_{12} (2,5)$ & $p_{13} (2,5)$ \\ \hline
	 		1 & 1.78\% & 2.18\% \\ \hline
	 		2 & 1.81\% & 0.29\% \\ \hline
	 		3 & 1.66\% & 0.47\% \\ \hline
	 		4 & 1.77\% & 0.40\% \\ \hline
	 	\end{tabular}
	 	\caption{Four experiments on $p_{12} (2,5)$ and $p_{13} (2,5)$}
	 \end{table}
 
 	The percentage has a smaller variance at level $12$ than at level $13$. And by averaging the four experiments we found $p_{12} (2,5) =1.76\%$, and $p_{13} (2,5) =0.84\%$, which is indeed a decrease.

	\sssec{Exponential Fit} We now see that $p_n(g,d)$ and $p_n(A)$ are well approximated by exponentially decreasing functions. 
	
	\begin{obs}
	\label{obs:RCLFExpFit}
	For \RC \ linear functionals, if $n \ge 2$ and if $2 \le g < d$ or $g=d=4 \mathrm{ \ or \ } 5$, then the ratios $p_n (g,d)$ and $p_n (A)$ are reasonably approximated by a decreasing exponential function of the form $$f(n)=ae^{-rn}$$
	where $r$ is some positive constant depending on $g$ and $d$. Furthermore, $r$ increases with $d$. This is illustrated in the following figures.
	\end{obs}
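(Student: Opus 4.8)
\evidence
Like the other statements in this section, Observation \ref{obs:RCLFExpFit} is empirical, so the plan is to exhibit the relevant data and fits rather than to give a deductive argument.

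First I would assemble the sequences to be fit. For each admissible pair $(g,d)$ with $2 \le g < d$ or $g=d\in\{4,5\}$, the runs recorded in Tables \ref{table:YiNFEPRC}, \ref{table:JohnNFEPRC}, and \ref{table:YiNFEPRCg2} already produce, at every level $n$, counts of reducible and of irreducible minimizers $\hX^\ell$ obtained from \RC \ linear functionals; dividing as in the definition gives the sequence $\{p_n(g,d)\}_{n\ge2}$, and the same data restricted to a single fixed defining tuple $A$ gives $\{p_n(A)\}_{n\ge2}$. (Recall $p_1=0$ always, so $n=1$ is excluded.)

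Second, for each such sequence I would fit a curve $f(n)=ae^{-rn}$ with $a,r>0$, using the procedures already employed in Section \ref{sec:KerDimDistri}: both ordinary least squares on the residuals $f(n)-p_n$ and weighted least squares with weights $1/p_n^2$ (so that the small tail values are not swamped), the minimization being run with NMinimize initialized from the slope and intercept of a straight-line fit to $(n,\log p_n)$. The fitted parameters $(a,r)$ and the residual errors would be tabulated and the resulting curves overlaid on the data in the figures referenced by the statement; ``reasonably approximated'' would be judged against these residuals and, for contrast, against the fit quality of plausible alternatives such as a power law $an^{-r}$ or a logistic curve. Third, for the assertion that $r$ increases with $d$, I would record the fitted decay rate $r=r(g,d)$ for each fixed $g$ as $d$ ranges over the available values and check monotonicity along each row of the resulting table.

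The main obstacle is that both ``reasonably approximated'' and ``$r$ increases with $d$'' are claims about noisy, finite samples. For larger $n$ the number of reducible minimizers falls to single digits while the discard rate from numerical ill-conditioning grows (Section \ref{sec:NumErr}), so the tail values $p_n$ carry real sampling error --- this is exactly the regime in which the small non-monotonicities discussed in the previous subsubsection appear, and where an exponential fit is easiest to over-trust. I would therefore (i) restrict each fit to the range of $n$ with adequate sample sizes, (ii) repeat experiments under independent random seeds to estimate the variance of the fitted $r$ (as was done for $p_{12}(2,5)$ and $p_{13}(2,5)$), and (iii) treat the $d$-monotonicity of $r$ as a qualitative trend across the finitely many $(g,d)$ examined rather than a sharp inequality. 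A genuine theorem here would additionally require explaining \emph{why} reducible optimizers are geometrically suppressed in $n$ --- for instance, a codimension or multiplicative-in-$n$ mechanism forcing a proper reducing subspace --- which we do not attempt.
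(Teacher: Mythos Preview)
Your proposal is correct and takes essentially the same approach as the paper: since the observation is empirical, the paper's ``proof'' consists entirely of displaying the fitted figures for $p_n(g,d)$ across the admissible $(g,d)$ pairs. Your plan is in fact more thorough than what the paper does---the paper does not report residuals, compare against power-law or logistic alternatives, or tabulate the fitted $r$ values to check $d$-monotonicity; it simply overlays exponential curves on the data and lets the figures speak.
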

	
	In the following figures the $x$-axis is the level $n$ of the free spectrahedra over which RC linear functionals were optimized, and the $y$-axis is the proportion of optimizers $\hat{X}$ which were reducible.

\begin{center}
	\begin{minipage}{0.3\textwidth}
		\includegraphics[width=\linewidth]{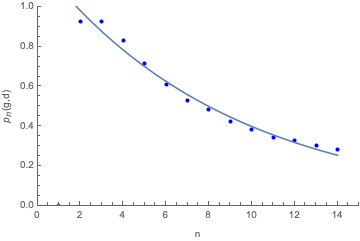}\par
		\center{ $p_n (2,3)$ \ \ RC}
	\end{minipage}
	\begin{minipage}{0.3\textwidth}
		\includegraphics[width=\linewidth]{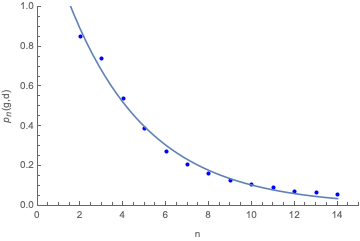}\par
		\center{$p_n (2,4)$ \ \ RC}
	\end{minipage}
	\begin{minipage}{0.3\textwidth}
		\includegraphics[width=\linewidth]{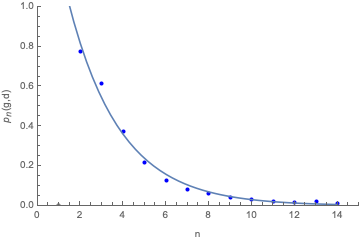}\par
			\center{$p_n (2,5)$ \ \ RC}
	\end{minipage}
\end{center}

\begin{center}
		\begin{minipage}{0.3\textwidth}
		\includegraphics[width=\linewidth]{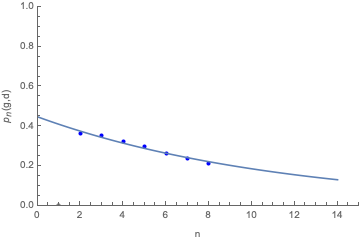}\par
		\center{ $p_n (3,4)$ \ \ RC} 
	\end{minipage}
	\begin{minipage}{0.3\textwidth}
		\includegraphics[width=\linewidth]{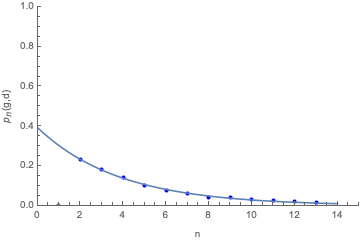}\par
		\center{ $p_n (3,5)$ \ \ RC} 
	\end{minipage}
\end{center}

\begin{center}

	\begin{minipage}{0.3\textwidth}
		\includegraphics[width=\linewidth]{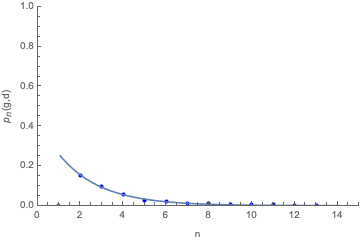}\par
		\center{ $p_n (3,6)$ \ \ RC}
	\end{minipage}
	\begin{minipage}{0.3\textwidth}
	\includegraphics[width=\linewidth]{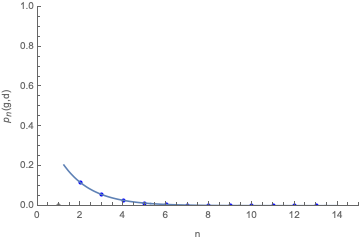}\par
		\center{ $p_n (3,7)$ \ \ RC} 
\end{minipage}
\end{center}

\begin{center}
	\begin{minipage}{0.3\textwidth}
		\includegraphics[width=\linewidth]{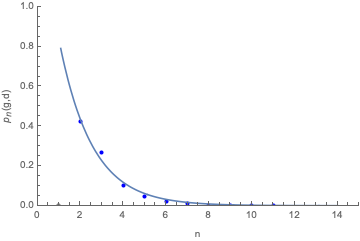}\par
		\center{ $p_n (4,4)$ \ \ RC}
	\end{minipage}
	\begin{minipage}{0.3\textwidth}
		\includegraphics[width=\linewidth]{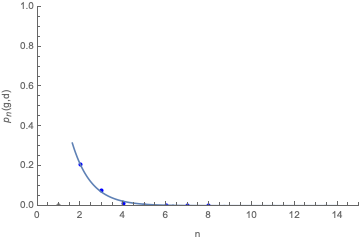}\par
		\center{ $p_n (4,5)$ \ \ RC} 
	\end{minipage}
	\begin{minipage}{0.3\textwidth}
		\includegraphics[width=\linewidth]{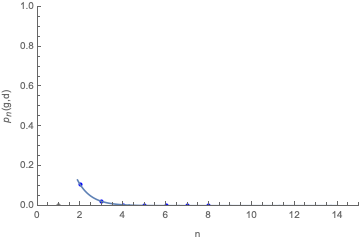}\par
		\center{ $p_n (4,6)$ \ \ RC} 
	\end{minipage}
\end{center}

\begin{center}
	\begin{minipage}{0.3\textwidth}
		\includegraphics[width=\linewidth]{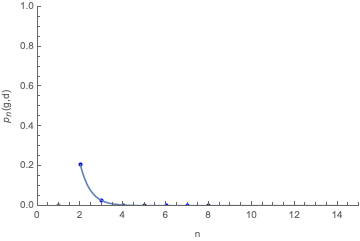}\par
		\center{ $p_n (5,5)$ \ \ RC} 
	\end{minipage}
	\begin{minipage}{0.3\textwidth}
		\includegraphics[width=\linewidth]{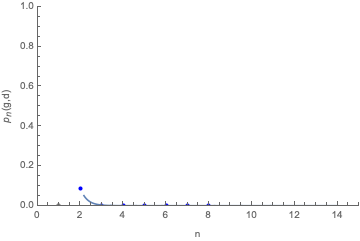}\par
		\center{ $p_n (5,6)$ \ \ RC}
	\end{minipage}
	\begin{minipage}{0.3\textwidth}
		\includegraphics[width=\linewidth]{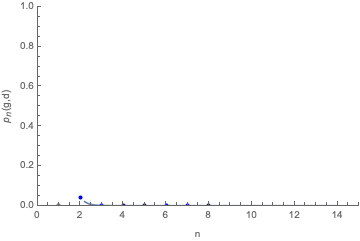}\par
		\center{ $p_n (5,7)$ \ \ RC}
	\end{minipage}
\end{center}
\begin{center}

	\begin{minipage}{0.3\textwidth}
		\includegraphics[width=\linewidth]{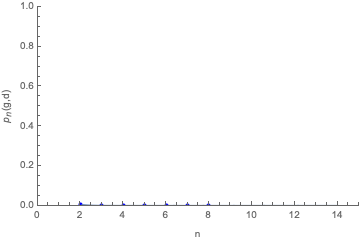}\par
		\center{ $p_n (6,7)$ \ \ RC}
	\end{minipage}
\end{center}

Similar to $p_n(g,d)$, we find that $p_n (A)$ is well approximated by decreasing exponential curves when using RC linear functionals. However, we do not observe a clear pattern of $r$ versus $d$ for $p_n(A)$ as in Observation \ref{obs:RCLFExpFit}.

\subsection{RPT behaviour}

	In experiments using RPT linear functionals, for $g > 3$, we observe that $p_n (A)$ and $p_n (g,d)$ have monotonically (exponentially) decreasing behaviour similar to what we saw using RC linear functionals. On the other hand, for $g=3$, $p_n (A)$ and $p_n (g,d)$ do not show an obvious decreasing trend; and for $g=2$, $p_n (g,d)$ are decreasing slower than exponential decay.

	\sssec{$p_n (g,d)$} The following graphs show $p_n (g,d)$ using RPT linear functional experiments. 

	\begin{center}
		\begin{minipage}{0.3\textwidth}
			\includegraphics[width=\linewidth]{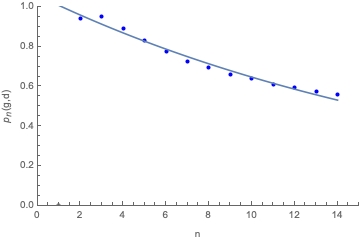}\par
			\center{ $p_n (2,3)$ \ \ RPT}
		\end{minipage}
		\begin{minipage}{0.3\textwidth}
			\includegraphics[width=\linewidth]{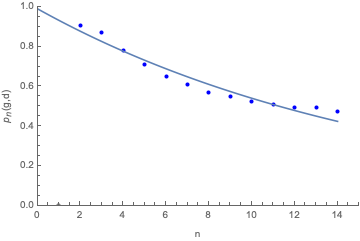}\par
			\center{$p_n (2,4)$ \ \ RPT}
		\end{minipage}
		\begin{minipage}{0.3\textwidth}
			\includegraphics[width=\linewidth]{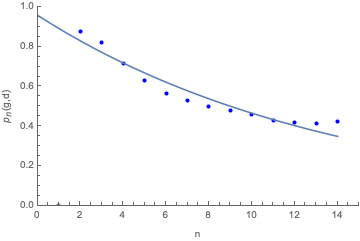}\par
			\center{$p_n (2,5)$ \ \ RPT}
		\end{minipage}
	\end{center}
	
	\begin{center}
		\begin{minipage}{0.3\textwidth}
			\includegraphics[width=\linewidth]{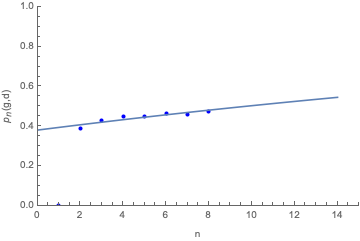}\par
			\center{ $p_n (3,4)$ \ \ RPT} 
		\end{minipage}
		\begin{minipage}{0.3\textwidth}
			\includegraphics[width=\linewidth]{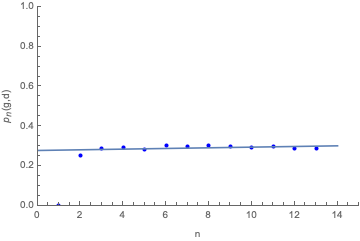}\par
			\center{ $p_n (3,5)$ \ \ RPT} 
		\end{minipage}
	\end{center}
	
	\begin{center}
		
		\begin{minipage}{0.3\textwidth}
			\includegraphics[width=\linewidth]{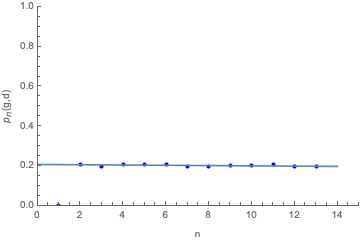}\par
			\center{ $p_n (3,6)$ \ \ RPT}
		\end{minipage}
		\begin{minipage}{0.3\textwidth}
			\includegraphics[width=\linewidth]{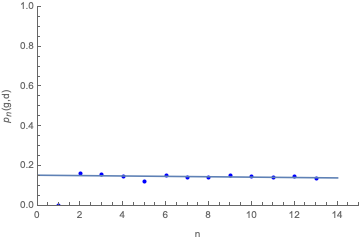}\par
			\center{ $p_n (3,7)$ \ \ RPT}
		\end{minipage}
	\end{center}

	\begin{center}
		\begin{minipage}{0.3\textwidth}
			\includegraphics[width=\linewidth]{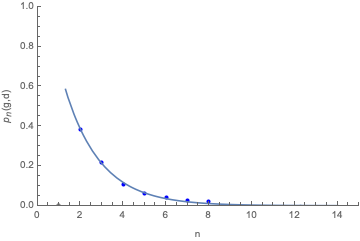}\par
			\center{ $p_n (4,4)$ \ \ RPT} 
		\end{minipage}
		\begin{minipage}{0.3\textwidth}
			\includegraphics[width=\linewidth]{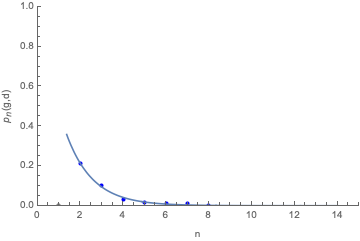}\par
			\center{ $p_n (4,5)$ \ \ RPT}
		\end{minipage}
		\begin{minipage}{0.3\textwidth}
			\includegraphics[width=\linewidth]{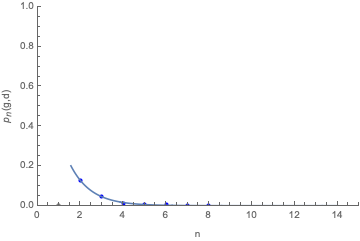}\par
			\center{ $p_n (4,6)$ \ \ RPT} 
		\end{minipage}
	\end{center}
	
	\begin{center}
		\begin{minipage}{0.3\textwidth}
			\includegraphics[width=\linewidth]{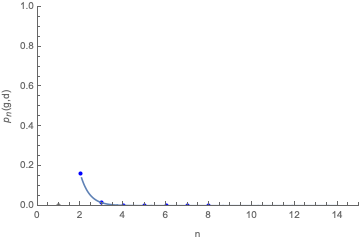}\par
			\center{ $p_n (5,5)$ \ \ RPT} 
		\end{minipage}
		\begin{minipage}{0.3\textwidth}
			\includegraphics[width=\linewidth]{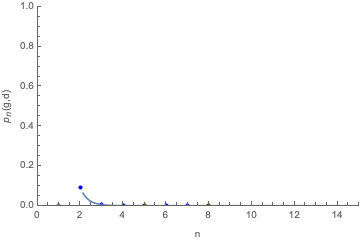}\par
			\center{ $p_n (5,6)$ \ \ RPT}
		\end{minipage}
		\begin{minipage}{0.3\textwidth}
			\includegraphics[width=\linewidth]{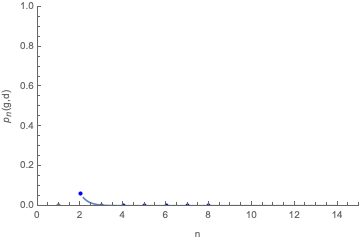}\par
			\center{ $p_n (5,7)$ \ \ RPT} 
		\end{minipage}
	\end{center}
	\begin{center}
		
		\begin{minipage}{0.3\textwidth}
			\includegraphics[width=\linewidth]{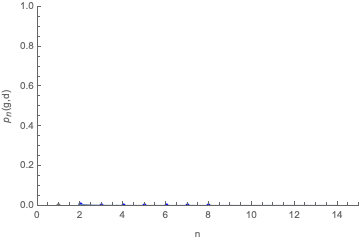}\par
			\center{ $p_n (6,7)$ \ \ RPT} 
		\end{minipage}
	\end{center}

	\sssec{$p_n (A)$} The subsection gives graphs representative of $p_n (A)$ behaviours seen in our experiments using RPT linear functionals when $g = 3,4,5$.
	
	For $g = 4,5$, the behaviour for $p_n (A)$ is consistent with that of $p_n (g,d)$ when using RPT linear functionals; the graphs decrease exponentially.

	\begin{center}
		\begin{minipage}{0.3\textwidth}
			\includegraphics[width=\linewidth]{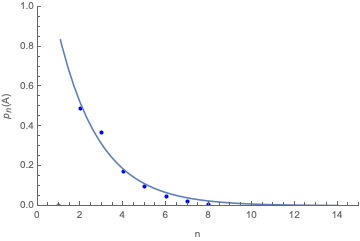}
			\par
			\center{  g4d4sphd4 \ $p_n (A)$ \  RPT } 
		\end{minipage}
		\hspace{0.1\textwidth}
		\begin{minipage}{0.3\textwidth}
			\includegraphics[width=\linewidth]{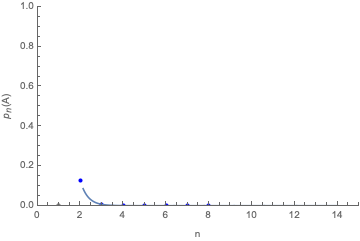}
			\par
			\center{  g5d6sphd4 \ $p_n (A)$ \  RPT }
		\end{minipage}
	\end{center}
    
    For $g = 3$, since we do not observe a decreasing trend for $p_n (g,d)$ using RPT linear functionals, we further examine $p_n (A)$ on single spectrahedra. We observe that $p_n (A)$ has heterogeneous behaviours. The following graphs are representative of behaviours seen in our $g = 3$ experiments using RPT linear functionals. The graphs are ordered in frequency of occurrence of the represented behaviours. 
    
    \begin{center}
    	\hspace{0.02\textwidth}
    	\begin{minipage}{0.3\textwidth}
    		\includegraphics[width=\linewidth]{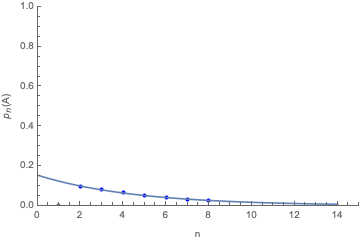}
    		\par
    		\center{  g3d5sphd4  \ $p_n (A)$ \  RPT  \\
    			decreasing to 0\\ 
    			frequency = 9/18}
    	\end{minipage}
    	\hspace{0.1\textwidth}
    	\begin{minipage}{0.3\textwidth}
    		\includegraphics[width=\linewidth]{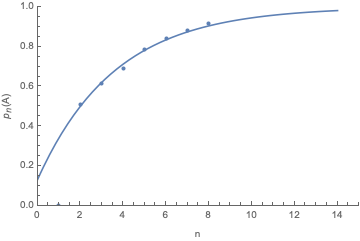}
    		\par
    		\center{g3d5sphd2  \ $p_n (A)$ \  RPT \\ increasing to 1 (asymptotically), frequency = 4/18}
    	\end{minipage}
    \end{center}
    \begin{center}
    	\begin{minipage}{0.3\textwidth}
    		\includegraphics[width=\linewidth]{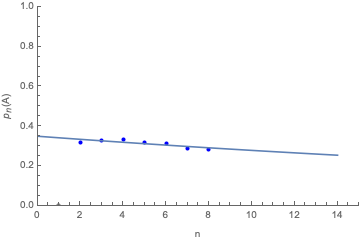}
    		\par
    		\center{g3d4sphd5 \ $p_n (A)$ \  RPT \\ deceasing but maybe not to 0, frequency = 3/18}
    	\end{minipage}
    	\hspace{0.1\textwidth}
    	\begin{minipage}{0.3\textwidth}
    		\includegraphics[width=\linewidth]{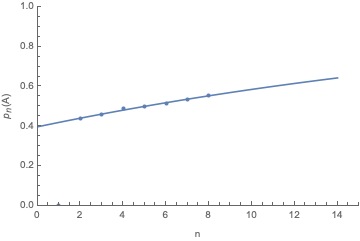}
    		\par
    		\center{g3d5sphd5 \ $p_n (A)$ \  RPT \\ increasing but maybe not to 1, frequency = 1/18}
    	\end{minipage}
    \end{center}
    
    \begin{center}
    	\begin{minipage}{0.3\textwidth}
    		\includegraphics[width=\linewidth]{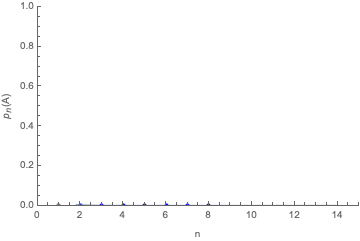}
    		\par
    		\center{g3d5sphd6 \ $p_n (A)$ \  RPT \\ decreasing to 0, frequency = 1/18}
    	\end{minipage}
    \end{center}

    \subsection{Irreducibility of extreme points can depend heavily on the spectrahedron}
    
    We end the section by briefly noting that there exist known spectrahedra with exceptional properties in terms of reducibility of extreme points. For example, if $A$ is a $g$ tuples of $(g+1) \times (g+1)$ diagonal matrices which define a bounded free spectrahedron, then $D_A$ is called a free simplex in $g$ variables. \cite[Theorem 6.5]{EHKM18} shows that free extreme points of a free simplex are exactly equal to the Euclidean extreme points of $\cD_A (1)$. That is, for $n\geq 2$, for a free simplex every Arveson extreme point of $\cD_A (n)$ is fully reducible in the sense that they are simultaenously diagonalizable. Also see \cite{FNT17} and \cite{PSS18} for further discussion of free simplices. 
    
     It also is possible to choose $A$ so that $\cD_A$ is an irreducible free spectrahedron whose free extreme points are exactly equal to the Euclidean extreme points at level one of $\cD_A$. In particular, if one takes
    \[
    A_1= \begin{pmatrix} 0 & 1 \\
    1 & 0 
    \end{pmatrix}
    \mathand
    A_2 = \begin{pmatrix} 1 & 0 \\
    0 & -1 
    \end{pmatrix}
    \]
    Then $A=(A_1,A_2)$ defines a free disc which has the property $\free \cD_A = \euc \cD_A (1)$, see \cite[Proposition 7.5]{EHKM18}.  
    
 A free spectrahedron which has all its free extreme points at level one is an example of a minimal matrix convex set, see \cite{PSS18}. As such, both these examples can be viewed as special cases of \cite[Proposition 2.5]{PS19}.
    
    \sec{Software and data availability}
    \label{sec:softwareanddata}

	\begin{sloppypar}

    The NCSE package created for these experiments as well as the raw data may be found online in directories \url{https://github.com/NCAlgebra/UserNCNotebooks/tree/master/NCSpectrahedronExtreme}, and  \url{https://github.com/NCAlgebra/UserNCNotebooks/tree/master/EvertFuHeltonYin}, what we now describe as NCSpectrahedronExtreme and EvertFuHeltonYin respectively.

  \end{sloppypar}

    \ssec{Software}

    Our experiments are all run using the NCSE \cite{EOYH19} package for NCAlgebra \cite{OHMS17}, which, at the time of running our experiments\footnote{NCSE has since been updated to use the Mathematica 12 SDP by default. As an option, a user may still use the NCAlgebra SDP.}, used the NCAlgebra SDP package. Experiments were mostly run in Mathematica 11. Mathematica 12 has a semidefinite program embedded, so we compared some of our results to results obtained using the Mathematica SDP when it became available. We found that the choice of SDP solver had little impact on the outcome.

    \ssec{Data availability and reproduction}
    
In EvertFuHeltonYin, there are two folders, one for experiments on fixed spectrahedra and one for experiments on collections of randomly generated spectrahedra. Each folder contains a collection of spread sheets, with each spread sheet containing all data for runs on spectrahedra with fixed $g$ and $d$ using either RPT or RC linear functionals.
    
    In addition each folder contains a Mathematica notebook which may be used in combination with NCSE to reproduce our experiments.

In Section \S\ \ref{sec:irredExp}, we have a few figure labels related to the raw data. There is a naming system in which each figure corresponds to a ``sheet". For example, the figure ``$p_n(2,4)$ RC" corresponds to the sheet ``g2d4" in ``random\_sphd.xlsx";  the figure ``$p_n(2,4)$ RPT" corresponds to the sheet ``g2d4" in ``random\_sphd\_RPT.xlsx". The figure ``g4d4sphd4 $p_n (A)$  RPT" corresponds to the sheet ``irredg4d4John4 RPT" inside the file "irredg4d4\_RPT.xlsx".
  
  \newpage

	\newpage
	
\centerline{	NOT FOR PUBLICATION}
		
		\tableofcontents

	\printindex
	
\end{document}